\newtheorem{theorem}{Theorem}[section]
\newtheorem{proposition}[theorem]{Proposition} 
\newtheorem{lemma}[theorem]{Lemma}
\newtheorem{cor}[theorem]{Corollary}
\newtheorem{fact}[theorem]{Fact}
\newtheorem{observation}[theorem]{Observation}
\newtheorem{counterexample}[theorem]{Counterexample}
\theoremstyle{definition}
\newtheorem{definition}[theorem]{Definition}
\newtheorem{convention}[theorem]{Convention}
\newtheorem{question}[theorem]{Question}
\newtheorem{assumption}[theorem]{Assumption}
\theoremstyle{remark}
\newcommand{\defemph}{\textit}
\newcommand{\dom}{\mathrm{Dom}}
\newcommand{\cf}{\mathrm{cf}}
\newcommand{\mf}{\mathfrak}
\newcommand{\mc}{\mathcal}
\newcommand{\mbb}{\mathbb}
\newcommand{\forces}{\Vdash}
\newcommand{\om}{\omega}
\newcommand{\Fin}{\mathit{fin}}
\newcommand{\lgl}{\langle}
\newcommand{\rgl}{\rangle}
\newcommand{\al}{\alpha}
\newcommand{\ra}{\rightarrow}
\newcommand{\bP}{\mathbb{P}}
\newcommand{\bB}{\mathbb{B}}
\newcommand{\Vopenka}{Vop\v{e}nka}
\newcommand{\Bukovsky}{Bukovsk\'{y}}
\newcommand{\Coplakova}{Copl\'{a}kov\'{a}}
\title{Perfect Tree Forcings for Singular Cardinals}
\author{Natasha Dobrinen, Dan Hathaway,
 and Karel Prikry}
\address{Department of Mathematics, 
University of Denver, 2280 S Vine St, Denver, CO 80208, USA}
\email{Natasha.Dobrinen@du.edu}
\address{
Department of Mathematics \\
University of Vermont\\
Innovation Hall \\
82 University Place\\
Burlington, VT 05405 U.S.A.}
\email{Daniel.Hathaway@uvm.edu}
\address{Department of Mathematics, University of Minnesota, 127 Vincent Hall, 206 Church St.\ SE, Minneapolis, MN 55455}
\email{prikry@math.umn.edu}
\thanks{Dobrinen's research was partially supported by National Science Foundation Grant DMS-1600781.}
\begin{document}

\begin{abstract}
We investigate forcing properties of 
perfect tree forcings defined by Prikry to answer a question of Solovay in the late 1960's regarding first failures of distributivity.
Given a strictly increasing sequence of regular cardinals $\lgl \kappa_n:n<\om\rgl$, 
Prikry defined 
the forcing $\bP$  of all
 perfect subtrees of $\prod_{n<\om}\kappa_n$,
and proved that
  for $\kappa=\sup_{n<\om}\kappa_n$,
assuming the necessary cardinal arithmetic,
the Boolean completion  $\bB$ of $\bP$ is $(\om,\mu)$-distributive  for all $\mu<\kappa$
 but  $(\om,\kappa,\delta)$-distributivity fails for all $\delta<\kappa$,
implying failure of  the $(\om,\kappa)$-d.l.
 These hitherto unpublished results  are included, setting the stage for the following recent results.
$\bP$ satisfies a Sacks-type property, implying that $\bB$ is  $(\om,\infty,<\kappa)$-distributive.
The
 $(\mathfrak{h},2)$-d.l.\ and the $(\mathfrak{d},\infty,<\kappa)$-d.l.\ fail  in $\bB$.
 $\mathcal{P}(\om)/\Fin$ completely embeds into $\bB$.
 Also, $\bB$ collapses $\kappa^\omega$ to $\mathfrak{h}$.
We further prove that if $\kappa$ is a limit of countably many measurable cardinals,
then $\bB$ adds a minimal degree of constructibility for new $\om$-sequences.
Some of  these results generalize to cardinals $\kappa$ with uncountable cofinality.
\end{abstract}

\maketitle

\section{Introduction}\label{intro}

An ongoing area of research is to
 find complete Boolean algebras that witness first failures of distributive laws.
In the late 1960's, Solovay asked the following question: 
For which cardinals $\kappa$ 
is there a complete Boolean algebra $\mathbb{B}$ such that for all $\mu<\kappa$, the $(\om,\mu)$-distributive law holds in $\mathbb{B}$, while the $(\om,\kappa)$-distributive law fails (see \cite{Namba70})?
In forcing language, Solovay's question asks for which 
cardinals $\kappa$ is there a forcing extension in which 
there is a new $\om$-sequence of ordinals in $\kappa$, while every $\om$-sequence of ordinals bounded below $\kappa$ is in the ground model?
Whenever such a Boolean algebra exists, it must be the case that
$\mu^{\om}<\kappa$, for all $\mu<\kappa$.
It also must be the case that
 either $\kappa$ is regular or else $\kappa$ has cofinality $\om$, as shown in \cite{Namba70}.

For the case when $\kappa$ is regular, Solovay's question was solved independently using different forcings by Namba in \cite{Namba70} and \Bukovsky\ in \cite{Bukovsky75}.
Namba's forcing is similar to Laver forcing, where above the stem, all nodes split with the number of immediate successors having maximum cardinality.
  \Bukovsky's   forcing consists of perfect  trees, where splitting nodes have the maximum cardinality of  immediate successors.
 \Bukovsky's work was motivated by the following question which \Vopenka\, asked in 1966:
Can one change the cofinality of a regular cardinal without collapsing smaller cardinals (see \cite{Bukovsky75})?
Prikry solved \Vopenka's question for measurable cardinals in his dissertation \cite{Prikry70}.
The work of \Bukovsky\ and of Namba solved \Vopenka's question for $\aleph_2$, which is now known, due to Jensen's covering theorem, to be the only possibility without assuming large cardinals.

In the late 1960's, Prikry solved Solovay's question for the case when $\kappa$ has cofinality $\om$
 and  $\mu^{\om}<\kappa$ for all $\mu<\kappa$.
His proof was never published, but his result is quoted in \cite{Namba70}.
In this article, we provide 
modified versions of Prikry's original proofs, generalizing them to cardinals of uncountable cofinality whenever this is straightforward. 
The perfect tree forcings constructed by Prikry are interesting  in their own right, and his original results provided the impetus for the recent results in this article, further  investigating their forcing properties.

\Bukovsky\ and \Coplakova\  conducted a comprehensive study of 
 forcing properties of generalized Namba  forcing and of a family of perfect tree forcings
 in  \cite{Bukovsky/Coplakova90}.
 They found which distributive laws hold, which cardinals are collapsed, and proved under certain assumptions that the forcing extensions  are minimal for adding new $\om$-sequences.
Their perfect tree forcings,
 defined in Section 3 of \cite{Bukovsky/Coplakova90},
are similar, but not equivalent, to the forcings investigated in this paper;
some of their techniques are appropriated in later sections.
A  variant of Namba style tree forcings, augmented from Namba forcing analogously to how the perfect tree forcings in \cite{Bukovsky/Coplakova90} are augmented from those in \cite{Bukovsky75}, was used 
by Cummings, Foreman and Magidor in \cite{Cummings/Foreman/Magidor03} to prove that a supercompact cardinal can be forced to collapse to $\aleph_2$ so that in this forcing extension, 
$\square_{\om_n}$ holds for all positive integers $n$, and each stationary subset of $\aleph_{\om+1}\cap \mathrm{cof}(\om)$ reflects to an $\al$ with cofinality $\om_1$.
We point out that the addition of a new $\om$-sequence of ordinals has consequences for the co-stationarity 
of the ground model in the $\mathcal{P}_{\mu}(\lambda)$ of the extension model.
It follows from more general work in \cite{DobrinenOmSeq06}  that if the ground model $V$ satisfies $\square_{\mu}$ for all regular cardinals $\mu$ in  forcing extension $V[G]$ and if
$V[G]$ contains a new sequence $f:\om\ra\kappa$, then for all cardinals $\mu<\lambda$
in $V[G]$ with $\mu$ regular in $V[G]$ and $\lambda\ge \kappa$,
 $(\mathcal{P}_{\mu}(\lambda))^{V[G]}\setminus V$ is stationary in $(\mathcal{P}_{\mu}(\lambda))^{V[G]}$.
It seems likely that further investigations of variants of Namba and perfect tree forcings should lead to interesting results.

A complete Boolean algebra $\mathbb{B}$ is said to {\em satisfy the $(\lambda,\mu)$-distributive law}
($(\lambda,\mu)$-d.l.)
 if for each collection of $\lambda$ many partitions of unity into at most $\mu$ pieces, there is a common refinement. 
This is equivalent to saying that forcing with  $\mathbb{B}\setminus \{\mathbf{0}\}$ does not add any new functions from $\lambda$ into $\mu$.
The weaker three-parameter distributivity is defined as follows:
$\mathbb{B}$  {\em satisfies the $(\lambda,\mu,<\delta)$-distributive law} 
($(\lambda,\mu,<\delta)$-d.l.)
if in any  forcing extension $V[G]$ by  $\mathbb{B}\setminus\{\mathbf{0}\}$,
for each function $f:\lambda\ra\mu$ in $V[G]$, there is a function $h:\lambda\ra[\mu]^{<\delta}$ in the ground model $V$ such that $f(\al)\in h(\al)$, for each $\al<\lambda$.
Such a function $h$ may be thought of as a covering of $f$ in the ground model.  
Note that  the $\delta$-chain condition implies 
 $(\lambda,\mu,<\delta)$-distributivity, for all $\lambda$ and $\kappa$.
We
 shall usually write $(\lambda, \mu,\delta)$-distributivity instead of   $(\lambda,\mu,<\delta^+)$-distributivity.
See \cite{KoppelbergHB} for more background on distributive laws.

In this paper,
given any strictly increasing sequence of regular cardinals $\lgl \kappa_n :n<\om\rgl$,  letting $\kappa=\sup_{n<\om}\kappa_n$ and assuming that $\mu^{\om}<\kappa$ for all $\mu<\kappa$,
 $\mathbb{P}$  is a collection of certain perfect subtrees of $\prod_{n<\om}\kappa_n$, partially ordered by inclusion, described in
 Definition \ref{defn.2.5}.
Let $\mathbb{B}$ denote its Boolean completion.
We  prove the following.
$\mbb{P}$ has size $\kappa^\omega$
 and $\mbb{B}$ has maximal antichains of
 size $\kappa^\omega$, but no larger.
$\mbb{P}$ satisfies the
 $(\omega, \kappa_n)$-d.l.\ for each $n < \omega$
 but not the
 $(\omega, \kappa)$-d.l.
In fact, it does not satisfy the
 $(\omega, \kappa, \kappa_n)$-d.l.\ for any $n < \omega$.
It does, however, satisfy the
 $(\omega, \kappa, {<\kappa})$-d.l., and in fact it satisfies
 the $(\omega, \infty, {<\kappa})$-d.l.,
 because it satisfies a Sacks-like property.
On the other hand,
 the $(\mf{d}, \infty, {<\kappa})$-d.l.\ fails.
We do not know if $\infty$ can be replaced by a  cardinal strictly smaller than
 $\kappa^\omega$.
However, we do know that the
 $(\mf{h}, 2)$-d.l.\ fails.
 ($\mf{h}$ and $\mf{d}$ are cardinal characteristics of the
 continuum, and $\omega_1 \le \mf{h} \le \mf{d} \le 2^\omega$.)
In fact, we have that $P(\omega)/\Fin$ densely embeds
 into the regular open completion of $\mbb{P}$.
By similar reasoning, we show that forcing with
 $\mbb{P}$ collapses $\kappa^\omega$ to $\mf{h}$.
Under the assumption that $\kappa$ is the limit of measurables,
 we have that
 every $\omega$-sequence of ordinals in the extension is either
 in the ground model or it constructs the generic filter.
If $G$ is $\mbb{P}$-generic over $V$ and
 $H \in V[G]$ is $P(\omega)/\Fin$-generic
 over $V$, then since $P(\omega)/\Fin$ does not
 add $\omega$-sequences, $G \not\in V[H]$.
Thus, $\mbb{P}$
 does not add a minimal degree of constructibility.
Some of the results also hold for  cardinals $\kappa$ of uncountable cofinality, and these are presented in full generality.
The article closes with an example of what can go wrong when $\kappa$ has uncountable cofinality, 
highlighting some open problems and ideas for how to approach them.

\section{Definitions and Basic Lemmas}\label{sec.defs}

\subsection{Basic Definitions}

Recall that given a separative poset $\mbb{P}$,
 the \textit{regular open completion} $\mbb{B}$ of $\mbb{P}$
 is a complete Boolean algebra into which $\mbb{P}$
 densely embeds (after we remove the zero
 element $\mathbf{0}$  from $\mbb{B}$).
Every other such complete
 Boolean algebra is isomorphic to $\mbb{B}$.
A set $C \subseteq \mbb{P}$ is
 \textit{regular open} iff
\begin{itemize}
\item[1)]
 $(\forall p_1 \in C)(\forall p_2 \le p_1)\,
 p_2 \in C$, and
\item[2)] $(\forall p_1 \not\in C)
 (\exists p_2 \le p_1)
 (\forall p_3 \le p_2)\,
 p_3 \not\in C$.
\end{itemize}
Topologically, giving $\mathbb{P}$ the topology generated by basic open sets of the form 
$\{q\in\bP : q\le p\}$ for $p\in \bP$, a set 
$C\subseteq \mbb{P}$ is regular open if and only if it is equal to the interior of its closure in this topology.
We define $\mbb{B}$ as the collection
 of regular open subsets of $\mbb{P}$ ordered by inclusion.
See \cite{KoppelbergHB} for more background on the regular open completion of a partial ordering.

Given cardinals $\lambda$ and $\mu$,
 we say $\mbb{B}$ (or $\mbb{P}$) satisfies the
 $(\lambda, \mu)$-distributive law ($(\lambda, \mu)$-d.l.)
 if and only if  whenever $\{ A_\alpha : \alpha < \lambda \}$
 is a collection of size $\le \mu$ maximal
 antichains in $\mbb{B}$,
 there is a single $p \in \mbb{B}$
 below one element of each antichain.
This is equivalent to the statement
 $1_\mbb{B} \forces (^{\check{\lambda}} \check{\mu}
 \subseteq \check{V})$.
That is, every function from $\lambda$ to $\mu$
 in the forcing extension is already in the ground model.
Note that $\mbb{B}$ and $\mbb{P}$ force the same statements, 
since 
  $\mbb{P}$ densely embeds into $\mbb{B}$
 by the mapping $p \mapsto \{ q \in \mbb{P} : q \le p \}$.
The $(\lambda, \mu)$-d.l.\
 is equivalent to the statement that
 whenever $p \in \mbb{P}$ and $\dot{f}$ are such that
 $p \forces \dot{f} : \check{\lambda} \to \check{\kappa}$,
 then there are $q \le p$ and $g : \lambda \to \kappa$
 satisfying $q \forces \dot{f} = \check{g}$.
We will also study
 a distributive law weaker than the $(\lambda,\mu)$-d.l.; 
namely, the
 $(\lambda, \mu, {<\delta})$-d.l.
  where  $\delta \le \mu$.
This is the statement that
 for each $\alpha < \lambda$ there is a set
 $X_\alpha \in [A_\alpha]^{<\delta}$ such that
 there is a single non-zero element of $\mbb{B}$
 below $\bigvee X_\alpha$ for each $\alpha < \lambda$.
That is, there is some $p \in \mbb{P}$ such that
 $(\forall \alpha < \lambda)
  (\exists a \in X_\alpha)\,
  p \in a$.
The $(\lambda, \mu, {<\delta})$-d.l.\
 is equivalent to the statement that whenever
 $p \in \mbb{P}$ and $\dot{f}$ satisfy
 $p \forces \dot{f} : \check{\lambda} \to \check{\mu}$,
 then there exists $q \le p$ and a function
 $g : \lambda \to [\mu]^{<\delta}$ satisfying
 $q \forces (\forall \alpha < \check{\lambda})\,
 \dot{f}(\alpha) \in \check{g}(\alpha)$.
Finally,
 if $\mu$ is the smallest cardinal such that
 every maximal antichain in $\mbb{B}$ has size
 $\le \mu$, then the distributive law is unchanged
 if we replace $\mu$ in the second argument
 with any larger cardinal,
 so in this situation we write $\infty$
 instead of $\mu$.

\begin{convention}\label{conventionforpaper}
For this entire paper,
 $\kappa$ is a singular cardinal
 and $\langle \kappa_\alpha :
 \alpha < \cf(\kappa) \rangle$
 is an increasing
 sequence of regular cardinals
 with limit $\kappa$ such that
 $\cf(\kappa) < \kappa_\alpha < \kappa$
 for all $\alpha$.
\end{convention}

Note that
the cardinality of 
 $\prod_{\alpha < \cf(\kappa)}
 \kappa_\alpha $ equals  $ \kappa^{\cf(\kappa)}$, which is greater than $\kappa$.
We do not assume that $\kappa$
 is a strong limit cardinal.
However, we do make the following
 weaker assumption:

\begin{assumption}\label{basicassumption}
 $$(\forall \mu < \kappa)\,
 \mu^{\cf(\kappa)} < \kappa.$$
\end{assumption}
In a few places, we will make the
 special assumption that $\kappa$
 is the limit of measurable cardinals.

\begin{definition}\label{def.2.1}
The set $N \subseteq {^{<\cf(\kappa)} \kappa}$
 consists of all functions $t$ such that
 $\dom(t) < \cf(\kappa)$ and
 $(\forall \alpha \in \dom(t))\,
 t(\alpha) < \kappa_\alpha$.
We call each $t \in N$ a \defemph{node}.
Given a set $T \subseteq N$
 (which is usually a tree,
 meaning that it is closed under initial segments),
 $[T]$ is the set of all $f \in
 {^{\cf(\kappa)} \kappa}$ such that
 $(\forall \alpha < \cf(\kappa))\,
 f \restriction \alpha \in T$.
Define $X := [N]$.
Given $t_1, t_2 \in N \cup X$,
 we write $t_2 \sqsupseteq t_1$ iff
 $t_2$ is an extension of $t_1$.
\end{definition}

Note that $|N| = \kappa$
 and $|X| = \kappa^{\cf(\kappa)}$.
We point out that our set $X$ is commonly written as $\prod_{\al<\cf(\kappa)}\kappa_{\al}$.
In order to avoid confusion with cardinal arithmetic and to simplify notation, we shall use $X$ as defined above.

\begin{definition}\label{def.2.2}
Fix a tree $T \subseteq N$.
A \defemph{branch} through $T$
 is a maximal element of $T \cup [T]$.
Given $\alpha < \cf(\kappa)$,
 $T(\alpha) := T \cap {^\alpha \kappa}$
 is the set of all nodes of $T$ on
 \defemph{level} $\alpha$.
Given $t \in T$ such that $t \in T(\alpha)$,
 then $\mathrm{Succ}_T(t)$ is the set
 of all children of $t$ in $T$:
 all nodes $c \sqsupseteq t$ in $T(\alpha+1)$.
The word \defemph{successor} is another word
 for child (hence, successor
 always means immediate successor).
A node $t \in T$ is \defemph{splitting}
 iff $|\mathrm{Succ}_T(t)| > 1$.
$\mathrm{Stem}(T)$
 is the unique (if it exists)
 splitting node of $T$
 that is comparable
 (with respect to extension)
 to all other elements of $T$.
Given $t \in T$,
 the tree $T | t$ is the subset of $T$
 consisting of all nodes of $T$
 that are comparable to $t$.
\end{definition}

It is desirable for the trees
 that we consider to have no dead ends.

\begin{definition}\label{def.2.3}
A tree $T \subseteq N$ is called
 \defemph{non-stopping} iff
 it is non-empty
 and for every
 $t \in T$, there is some $f \in [T]$
 satisfying $f \sqsupseteq t$.
A tree $T \subseteq N$ is \defemph{suitable}
 iff $T$ has no branches of length
 $< \cf(\kappa)$.
\end{definition}

Suitable implies non-stopping,
 and they are equivalent if
 $\cf(\kappa) = \omega$.

\begin{definition}\label{defn.2.4}
A tree $T \subseteq N$ is
 \defemph{pre-perfect} iff $T$
 is non-stopping and
 for each $\alpha < \cf(\kappa)$ and
 each node $t_1 \in T$,
 there is some $t_2 \sqsupseteq t_1$ in $T$
 such that $|\mathrm{Succ}_T(t_2)| \ge \kappa_\alpha$.
A tree $T \subseteq N$ is \defemph{perfect}
 iff $T$ is pre-perfect and,
 instead of just being non-stopping, is suitable.
\end{definition}

In Section~\ref{secuncheight},
 we will construct a pre-perfect $T$
 such that $[T]$ has size $\kappa$.
That example points out problems that arise in straightforward attempts to generalize some of our results to singular cardinals of uncountable cofinality.
On the other hand, it is not hard to see that
 if $T$ is perfect, then $[T]$ has size $\kappa^{\cf(\kappa)}$.
We will now define the forcing that we will investigate.

\begin{definition}\label{defn.2.5}
$\mbb{P}$ is the set of all perfect trees $T \subseteq N$
 ordered by inclusion.
$\mbb{B}$ is the regular open completion of $\mbb{P}$.
\end{definition}

Note that by a density argument, given $\kappa$, the choice of the sequence
 $\langle \kappa_\alpha : \alpha < \cf(\kappa) \rangle$ having $\kappa$ as its limit
 does not affect the definition of $\mbb{P}$.

\begin{definition}\label{defn.2.6}
Assume $\cf(\kappa) = \omega$.
Fix a perfect tree $T \subseteq N$.
A node $t \in T$ is $0$-splitting iff
 it has exactly $\kappa_0$ children in $T$ and
 it is the stem of $T$ (so it is unique).
Given $n < \omega$,
 a node $t \in T$ is $(n+1)$-splitting iff
 it has exactly $\kappa_{n+1}$ children in $T$ and
 it's maximal proper initial segment that is splitting
 is $n$-splitting.
\end{definition}

\begin{definition}\label{defn.2.7}
Assume $\cf(\kappa) = \omega$.
Fix a perfect tree $T \subseteq N$.
We say $T$ is in \defemph{weak splitting normal form}
 iff every splitting node of $T$ is $n$-splitting
 for some $n$.
We say $T$ is in \defemph{medium splitting normal form}
 iff it is in weak splitting normal form
 and for each splitting node $t \in T$,
 all minimal splitting descendants of $t$
 are on the same level.
We say $T$ is in \defemph{strong splitting normal form}
 iff it is in medium splitting normal form and
 for each $n \in \omega$, there is some $l_n \in \omega$
 such that $T(l_n)$ is precisely the set of
 $n$-splitting nodes of $T$.
We say that the set
 $\{ l_n : n \in \omega \}$
 \defemph{witnesses}
 that $T$ is in strong splitting normal form.
\end{definition}

If $T$ is in weak splitting normal form,
 then for each $f \in [T]$,
 there is a sequence
 $t_0 \sqsubseteq t_1 \sqsubseteq ...$
 of initial segments of $f$ such that
 $t_n$ is $n$-splitting for each $n < \omega$
 (and these are the only splitting nodes on $f$).
It is not hard to prove that any
 $T \in \mbb{P}$ can be extended to some
 $T' \le T$ in medium splitting normal form.
Furthermore, the
 set of conditions below a condition in
 medium splitting normal form
 is isomorphic to $\mbb{P}$ itself.
This implies that whenever $\varphi$
 is a sentence in the forcing language
 that only involves names of the form
 $\check{a}$ for some $a \in V$, then either
 $1 \forces \varphi$ or $1 \forces \neg \varphi$.
In Proposition~\ref{cangetstrongnormalform},
 we will show
 (in the $\cf(\kappa) = \omega$ case)
 that each condition can be
 extended to one in \textit{strong} splitting normal form.


\subsection{Topology}

To prove several facts about $\mbb{P}$
 for the $\cf(\kappa) = \omega$ case,
a topological approach will be useful.

\begin{definition}
Given $t \in N$,
 let $B_t \subseteq X$ be the set of all
 $f \in X$ such that $f \sqsupseteq t$.
We give the set $X$ the topology
 induced by the basis
 $\{ B_t : t \in N \}$.
\end{definition}

\begin{observation}
Each $B_t \subseteq X$ for $t \in N$
 is clopen.
\end{observation}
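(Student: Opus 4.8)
The plan is a routine point-set topology argument, and I do not expect any real obstacle. The set $B_t$ is open essentially by definition, since it is one of the basic open sets generating the topology on $X$, so the only thing left to check is that $B_t$ is closed, i.e.\ that $X \setminus B_t$ is open.

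First I would unwind the membership condition: for $f \in X$ we have $f \in B_t$ iff $f \sqsupseteq t$ iff $f \restriction \dom(t) = t$. Hence $f \notin B_t$ iff $f \restriction \dom(t) \neq t$, and since $f \restriction \dom(t)$ and $t$ are functions with the common domain $\dom(t)$, this happens exactly when there is some $\beta < \dom(t)$ with $f(\beta) \neq t(\beta)$. When $\cf(\kappa) = \omega$ the ordinal $\dom(t)$ is just a natural number, so this step is immediate; for general cofinality one only needs the trivial fact that a function is determined by its restrictions to all strictly smaller levels, so distinctness of $f \restriction \dom(t)$ and $t$ must already be witnessed at some level $\beta < \dom(t)$.

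From this I would exhibit $X \setminus B_t$ as a union of basic open sets:
\[
 X \setminus B_t \;=\; \bigcup_{\beta < \dom(t)} \;\; \bigcup_{\substack{s \in N,\; \dom(s) = \beta + 1 \\ s(\beta) \neq t(\beta)}} B_s .
\]
Indeed, if $f \in X \setminus B_t$, pick $\beta < \dom(t)$ with $f(\beta) \neq t(\beta)$; then $f \in B_{f \restriction (\beta+1)}$, a set appearing on the right-hand side. Conversely, any $g \in B_s$ with $\dom(s) = \beta + 1 \le \dom(t)$ and $s(\beta) \neq t(\beta)$ satisfies $g(\beta) = s(\beta) \neq t(\beta)$, so $g \not\sqsupseteq t$, i.e.\ $g \notin B_t$. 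Since each $B_s$ is a basic open set, $X \setminus B_t$ is open, so $B_t$ is closed, and therefore clopen.
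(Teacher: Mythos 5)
Your argument is correct: $B_t$ is basic open by definition, and your decomposition of $X \setminus B_t$ as a union of the basic sets $B_s$ with $\dom(s)=\beta+1$ and $s(\beta)\neq t(\beta)$ for $\beta<\dom(t)$ is exactly the routine verification the paper leaves implicit (it states this as an observation with no proof). Nothing is missing.
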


\begin{observation}
A set $C \subseteq X$ is closed iff
 whenever $g \in X$ satisfies
 $(\forall \alpha < \cf(\kappa))\,
 |C \cap B_{g \restriction \alpha}|
 \not= \emptyset$,
 then $g \in C$.
\end{observation}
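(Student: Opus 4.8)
The statement is the standard characterization of closedness in a space whose topology is generated by a tree of basic open sets, so the plan is to prove both implications directly from the definitions, using nothing beyond the basis $\{B_t : t \in N\}$. Two bookkeeping points should be settled first. By Definition~\ref{def.2.1}, every $g \in X = [N]$ has $g \restriction \alpha \in N$ for all $\alpha < \cf(\kappa)$, so each set $B_{g \restriction \alpha}$ occurring in the statement is well defined, and since $g \sqsupseteq g \restriction \alpha$ it is an open neighborhood of $g$. Also, for $t_1, t_2 \in N$ one has $B_{t_1} \cap B_{t_2} \in \{ B_{t_1}, B_{t_2}, \emptyset \}$ according as $t_1 \sqsupseteq t_2$, $t_2 \sqsupseteq t_1$, or $t_1, t_2$ are incomparable, so $\{B_t : t \in N\}$ does satisfy the basis axioms; in particular every point of an open set has a basic open neighborhood contained in that set.

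For the forward direction, suppose $C$ is closed and $g \in X$ satisfies $C \cap B_{g \restriction \alpha} \neq \emptyset$ for every $\alpha < \cf(\kappa)$. If $g \notin C$, then $g$ lies in the open set $X \setminus C$, so there is some $t \in N$ with $g \in B_t \subseteq X \setminus C$. Since $g \in B_t$ we have $t = g \restriction \dom(t)$, and $\dom(t) < \cf(\kappa)$ as $t \in N$; hence $C \cap B_{g \restriction \dom(t)} = C \cap B_t = \emptyset$, contradicting the choice of $g$. So $g \in C$, as required.

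For the converse, assume every $g \in X$ with $C \cap B_{g \restriction \alpha} \neq \emptyset$ for all $\alpha < \cf(\kappa)$ lies in $C$. To see that $X \setminus C$ is open, fix $g \in X \setminus C$; by the contrapositive of the assumption there is $\alpha < \cf(\kappa)$ with $C \cap B_{g \restriction \alpha} = \emptyset$, i.e.\ $B_{g \restriction \alpha} \subseteq X \setminus C$, and $B_{g \restriction \alpha}$ is an open neighborhood of $g$. Hence $X \setminus C$ is a union of basic open sets, so it is open and $C$ is closed. I do not expect any genuine obstacle in this argument; the only steps requiring care are the two bookkeeping facts above — that restrictions of members of $X$ land in $N$, and that $\{B_t : t \in N\}$ is genuinely a basis — and both are immediate from the definitions.
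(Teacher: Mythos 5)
Your proof is correct and is exactly the routine argument intended here; the paper states this as an Observation without proof, and your two directions (a basic neighborhood of $g$ missing $C$ must be some $B_{g \restriction \alpha}$, and conversely the complement is a union of such basic sets) are the standard verification. The two bookkeeping points you flag — that $g \restriction \alpha \in N$ for $g \in X = [N]$, and that $\{B_t : t \in N\}$ is a basis because any two basic sets are nested or disjoint — are indeed the only things to check, and both hold as you say.
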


This next fact explains
 why we considered the concept of ``non-stopping'':

\begin{fact}
A set $C \subseteq X$ is closed iff
 $C = [T]$ for some (unique) non-stopping
 tree $T \subseteq N$.
\end{fact}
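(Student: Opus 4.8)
The plan is to set up a bijection between the closed subsets of $X$ and the non-stopping subtrees of $N$, given by $C \mapsto T_C := \{ t \in N : B_t \cap C \neq \emptyset \}$ in one direction and $T \mapsto [T]$ in the other; the ``iff'' and the uniqueness then both fall out of the verification that these maps are mutually inverse. First I would check that $T_C$ is always a subtree of $N$ closed under initial segments: if $f \in C$ witnesses $B_t \cap C \neq \emptyset$ (i.e.\ $f \sqsupseteq t$) and $s \sqsubseteq t$, then $f \sqsupseteq s$, so $s \in T_C$. Next, $T_C$ is non-stopping: given $t \in T_C$, choose $f \in C$ with $f \sqsupseteq t$; since $f \in X$, every $f \restriction \alpha$ for $\alpha < \cf(\kappa)$ lies in $T_C$ (witnessed by $f$ itself), so $f \in [T_C]$ and $f \sqsupseteq t$, as required.

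For the forward implication, assume $C$ is closed and show $C = [T_C]$. The inclusion $C \subseteq [T_C]$ is immediate, since for $f \in C$ every initial segment $f \restriction \alpha$ belongs to $T_C$. Conversely, if $g \in [T_C]$ then $g \restriction \alpha \in T_C$ for all $\alpha < \cf(\kappa)$, which is precisely the statement that $B_{g \restriction \alpha} \cap C \neq \emptyset$ for all $\alpha$; the Observation characterizing closed sets then gives $g \in C$. Hence $C = [T_C]$ with $T_C$ non-stopping, which is what is wanted.

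For the converse implication, let $T \subseteq N$ be non-stopping and show $[T]$ is closed. If $g \in X$ satisfies $B_{g \restriction \alpha} \cap [T] \neq \emptyset$ for every $\alpha < \cf(\kappa)$, choosing $f_\alpha \in [T]$ with $f_\alpha \sqsupseteq g \restriction \alpha$ gives $g \restriction \alpha = f_\alpha \restriction \alpha \in T$, so $g \in [T]$, and the Observation shows $[T]$ is closed. (Equivalently: if $g \notin [T]$, fix $\alpha$ with $g \restriction \alpha \notin T$; then $B_{g \restriction \alpha}$ is an open neighbourhood of $g$ missing $[T]$, so the complement of $[T]$ is open.) It remains to see that $T \mapsto [T]$ is injective on non-stopping trees, which follows by recovering $T$ from $[T]$: for $t \in T$, non-stopping gives $f \in [T]$ with $f \sqsupseteq t$, so $B_t \cap [T] \neq \emptyset$ and $t \in T_{[T]}$; conversely if $t \in T_{[T]}$, pick $f \in [T]$ with $f \sqsupseteq t$, whence $t = f \restriction \dom(t) \in T$. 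Thus $T_{[T]} = T$, and if $[T] = [T']$ with both non-stopping then $T = T_{[T]} = T_{[T']} = T'$.

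This is essentially routine bookkeeping with the definitions; the only conceptual point — and the thing to get right — is that ``non-stopping'' is exactly the condition that kills the failure of injectivity of $T \mapsto [T]$ (a tree and its pruning have the same set of branches), while the Observation already bridges topological closure and the level-by-level condition ``every $B_{g \restriction \alpha}$ meets $C$'', so no further point-set topology is needed. The mild subtlety to watch, when $\cf(\kappa) > \omega$, is the limit-ordinal case: one must use that $g \in X = [N]$ guarantees $g \restriction \alpha \in N$ for every $\alpha < \cf(\kappa)$, so that $B_{g \restriction \alpha}$ is a legitimate basic open set throughout the argument.
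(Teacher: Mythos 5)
Your proof is correct, and since the paper states this as a \emph{Fact} with no proof supplied, your routine verification via the mutually inverse maps $C \mapsto T_C$ and $T \mapsto [T]$, using the preceding Observation to bridge topological closure and the level-by-level condition, is exactly the intended argument. The only pedantic caveat is the degenerate case $C = \emptyset$: since Definition~\ref{def.2.3} requires a non-stopping tree to be non-empty, the statement implicitly concerns non-empty closed sets, and your check that $T_C$ is non-stopping silently uses $C \neq \emptyset$ --- worth a parenthetical remark but nothing more.
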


\begin{definition}
A set $C \subseteq X$ is \defemph{strongly closed}
 iff $C = [T]$ for some (unique)
 suitable tree $T \subseteq N$.
Hence, if $\cf(\kappa) = \omega$,
 then strongly closed is the same as closed.
\end{definition}

\begin{definition}
A set $P \subseteq X$ is
 \defemph{perfect} iff 
 it is strongly closed and for each
 $f \in P$,
 every neighborhood of $f$
 contains $\kappa^{\cf(\kappa)}$
 elements of $P$.
\end{definition}

Thus, every non-empty perfect set
 has size $\kappa^{\cf(\kappa)} = |X|$.
One can check that if $B \subseteq X$
 is clopen and
 $P \subseteq X$ is perfect, then
 $B \cap P$ is perfect.
The next lemma does not hold
 in the $\cf(\kappa) > \omega$ case
 when we replace ``perfect tree''
 with ``pre-perfect tree'',
 because it is possible for a pre-perfect
 tree to have $\kappa$ branches
 (see Counterexample~\ref{uncountablecounterex}).

\begin{lemma}
\label{perfecttreeimpliesset}
If $T \subseteq N$ is a perfect tree, then
 $[T]$ is a perfect set.
\end{lemma}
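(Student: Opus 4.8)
The plan is to unwind the definition of a perfect set: we must check that $[T]$ is strongly closed and that every neighborhood of every $f \in [T]$ meets $[T]$ in $\kappa^{\cf(\kappa)}$ points. The first clause is essentially free: a perfect tree is by definition suitable, so $[T]$ is strongly closed by the very definition of ``strongly closed'' (and $T$ really is \emph{the} witnessing suitable tree, since suitable trees are non-stopping and hence satisfy $T = \{ f \restriction \alpha : f \in [T],\ \alpha < \cf(\kappa) \}$, so there is no ambiguity). Thus the whole content of the lemma is in the second clause.

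For the second clause I would first reduce to basic neighborhoods: any neighborhood of $f \in [T]$ contains some $B_t$ with $t \sqsubseteq f$, hence $t = f \restriction \beta \in T$, and passing to a larger neighborhood only enlarges the intersection with $[T]$, so it suffices to show $|B_t \cap [T]| = \kappa^{\cf(\kappa)}$ for every $t \in T$. Since a short computation gives $B_t \cap [T] = [T \mid t]$ whenever $t \in T$, everything comes down to the claim that \emph{$T \mid t$ is again a perfect tree for each $t \in T$}. Granting the claim, $[T \mid t]$ has size $\kappa^{\cf(\kappa)}$ by the remark recorded just before the statement of the lemma (applied to the perfect tree $T \mid t$), and we are done. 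To prove the claim I would verify non-stopping, suitability, and pre-perfectness of $T \mid t$ by reducing each to the corresponding property of $T$. Non-stopping: a node $s \in T \mid t$ is comparable to $t$, so a branch of $T$ through the longer of $s$ and $t$ works. Suitable: a branch of $T \mid t$ of length $< \cf(\kappa)$ must, by maximality in $T \mid t$, extend $t$ (it cannot be a proper initial segment of $t$, since $t$ itself would then extend it in $T \mid t$), and is then also a branch of $T$ of length $< \cf(\kappa)$, contradicting that $T$ is suitable. Pre-perfect: given $\alpha < \cf(\kappa)$ and $t_1 \in T \mid t$, replace $t_1$ by the longer of $t_1, t$, apply pre-perfectness of $T$ to obtain $t_2 \sqsupseteq t$ with $|\mbox{Succ}_T(t_2)| \ge \kappa_\alpha$, and note that all $T$-successors of $t_2$ extend $t$ and so lie in $T \mid t$, giving $\mbox{Succ}_{T \mid t}(t_2) = \mbox{Succ}_T(t_2)$.

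If one prefers not to quote the size remark, the underlying direct argument that $|[S]| = \kappa^{\cf(\kappa)}$ for a perfect tree $S$ is a fusion: recursively build nodes $s_\sigma \in S$, indexed by the nodes $\sigma$ of a tree order-isomorphic to $N$, so that $\sigma \sqsubseteq \tau$ implies $s_\sigma \sqsubseteq s_\tau$, so that each $s_\sigma$ is a splitting node of $S$ with at least $\kappa_{\alpha}$ successors (where $\alpha$ is the level of $\sigma$) through which the $s_{\sigma ^\frown i}$ pass via distinct children, and — when $\cf(\kappa) > \omega$ — so that $s_\sigma = \bigcup_{\sigma' \sqsubsetneq \sigma} s_{\sigma'}$ at limit levels; the branches of $N$ then inject into $[S]$, while $|[S]| \le |X| = \kappa^{\cf(\kappa)}$ is automatic. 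I expect the only genuine obstacle to be exactly this limit-stage bookkeeping in the uncountable-cofinality case: one needs to know that the union formed at a limit level is still a node in $S$ that is not a dead end, which is precisely where suitability (absence of short branches) is used, and is the reason — as the paper itself signals via Counterexample~\ref{uncountablecounterex} — that the statement fails for merely pre-perfect trees.
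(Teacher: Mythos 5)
Your proof is correct and follows essentially the same route as the paper's: strong closedness comes for free from suitability, and the cardinality of $B_t \cap [T]$ is obtained by embedding a copy of $N$ into $T \mid t$ (your fusion construction is exactly the embedding the paper invokes, with the limit-stage bookkeeping spelled out). The intermediate repackaging via ``$T \mid t$ is again perfect'' plus the size remark is a harmless reorganization of the same argument.
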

\begin{proof}
Since $T$ is perfect,
 it is suitable, which by definition implies
 that $[T]$ is strongly closed.
Next, given any $t \in T$,
 we can argue that $B_t \cap [T]$ has size $\kappa^{\cf(\kappa)}$,
 because we can easily construct an embedding from
 $N$ into $T | t$, and we have that $X$ has size
 $\kappa^{\cf(\kappa)}$.
\end{proof}

This next lemma implies the opposite direction:
 if $P \subseteq X$ is a perfect set,
 then $P = [T]$ for some perfect tree $T \subseteq N$.

\begin{lemma}
\label{closedandeverywherebigimpliesperfect}
Fix $P \subseteq X$.
Suppose $P$ is strongly closed and
 for each $f \in P$,
 every neighborhood of $f$
 contains $\ge \kappa$
 elements of $P$.
Then $P = [T]$ for some (unique) perfect tree
 $T \subseteq N$.
Hence, $P$ is a perfect set.
\end{lemma}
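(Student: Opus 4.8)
The plan is to extract the canonical tree $T$ with $P = [T]$ and then check directly that $T$ is perfect, with Assumption~\ref{basicassumption} doing all the work. Since $P$ is strongly closed, by definition $P = [T]$ for the unique suitable tree
$$
T \;=\; \{\, f \restriction \alpha : f \in P,\ \alpha < \cf(\kappa) \,\},
$$
the tree of non-dead-end nodes determined by $P$; this is also the unique non-stopping tree with $P = [T]$, which settles uniqueness. Being suitable, $T$ is non-stopping, so by Definition~\ref{defn.2.4} it only remains to verify that $T$ is pre-perfect: for every $\alpha < \cf(\kappa)$ and every $t_1 \in T$ there is $t_2 \sqsupseteq t_1$ in $T$ with $|\mbox{Succ}_T(t_2)| \ge \kappa_\alpha$.

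I would establish this contrapositively. Fix $\alpha < \cf(\kappa)$ and $t_1 \in T$ and suppose, toward a contradiction, that every node $s \sqsupseteq t_1$ of $T$ has fewer than $\kappa_\alpha$ children. For each such $s$ fix an injection $e_s : \mbox{Succ}_T(s) \hookrightarrow \kappa_\alpha$, and to each branch $f \in [T | t_1]$ associate the function $c_f$ on $[\dom(t_1), \cf(\kappa))$ given by $c_f(\xi) = e_{f \restriction \xi}\bigl( f \restriction (\xi+1) \bigr)$. A straightforward transfinite recursion --- $f \restriction \dom(t_1) = t_1$ as the base case, the maps $e_s$ at successor steps, and unions of chains (which lie in $T$ because $f \in [T]$) at limit steps --- recovers $f$ from $c_f$, so $f \mapsto c_f$ is injective and hence $|[T | t_1]| \le \kappa_\alpha^{\cf(\kappa)}$. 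Since $\kappa_\alpha < \kappa$, Assumption~\ref{basicassumption} yields $\kappa_\alpha^{\cf(\kappa)} < \kappa$. On the other hand, since $t_1 \in T$ and $T$ is non-stopping there is $f \in [T]$ with $f \sqsupseteq t_1$; then $B_{t_1}$ is a basic neighborhood of $f$ in $X$ and $B_{t_1} \cap P = [T | t_1]$, so the hypothesis forces $|[T | t_1]| \ge \kappa$, a contradiction. Therefore $T$ is pre-perfect, hence perfect.

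It follows that $P = [T]$ for a perfect tree $T$, and by Lemma~\ref{perfecttreeimpliesset} the set $[T] = P$ is a perfect set; note this also upgrades the hypothesis ``$\ge \kappa$ elements of $P$ in every neighborhood'' to ``$\kappa^{\cf(\kappa)}$ elements of $P$ in every neighborhood'', which is built into being a perfect set. The one place that deserves care is the cardinality bound at limit levels in the $\cf(\kappa) > \omega$ case: one must confirm that the sequence of successive child-choices genuinely pins down a branch of $[T]$, so that the coarse estimate $\kappa_\alpha^{\cf(\kappa)}$ is legitimate, and that this estimate --- rather than anything sharper --- already does the job, which is exactly the content of Assumption~\ref{basicassumption}. (By contrast, the remark preceding Lemma~\ref{perfecttreeimpliesset} shows a pre-perfect tree may have as few as $\kappa$ branches; this does not disturb the argument above, which uses only that there are at least $\kappa$ of them.) Everything else is routine bookkeeping.
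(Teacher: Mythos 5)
Your proof is correct and follows essentially the same route as the paper's: extract the unique suitable tree $T$ with $P=[T]$, suppose pre-perfectness fails at some node $t_1$ and level $\alpha$, bound $|[T|t_1]| \le \kappa_\alpha^{\cf(\kappa)} < \kappa$ using Assumption~\ref{basicassumption}, and contradict the hypothesis via $B_{t_1}\cap P = [T|t_1]$. The only difference is that you spell out the branch-coding injection that the paper leaves implicit; the argument is the same.
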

\begin{proof}
Since $P$ is strongly closed,
 fix some (unique) suitable tree
 $T \subseteq N$ such that $P = [T]$.
If we can show that $T$ is a perfect tree,
 we will be done
 by the lemma above.

Suppose that $T$ is not a perfect tree.
Let $t \in T$ and $\alpha < \cf(\kappa)$
 be such that for every extension $t' \in T$
 of $t$, $| \mathrm{Succ}_T(t') | \le \kappa_\alpha$.
We see that $[(T|t)]$ has size at most
 $(\kappa_\alpha)^{\cf(\kappa)} < \kappa$,
 which is a contradiction.
\end{proof}


\begin{cor}
\label{majorequiv}
Fix $P \subseteq X$.
The following are equivalent:
\begin{itemize}
\item[1)] $P$ is perfect;
\item[2)] $P$ is strongly closed and
 $$(\forall f \in P)
  (\forall \alpha < \cf(\kappa))\,
  |P \cap B_{f \restriction \alpha}|
  = \kappa^{\cf(\kappa)};$$
\item[3)] $P$ is strongly closed and
 $$(\forall f \in P)
  (\forall \alpha < \cf(\kappa))\,
  |P \cap B_{f \restriction \alpha}|
  \ge \kappa;$$
\item[4)] There is a perfect
 tree $T \subseteq N$ such that
 $P = [T]$.
\end{itemize}
\end{cor}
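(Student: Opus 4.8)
The plan is to prove the cycle of implications $1) \Rightarrow 2) \Rightarrow 3) \Rightarrow 4) \Rightarrow 1)$, most of which have already been done in the preceding lemmas. First, $2) \Rightarrow 3)$ is trivial since $\kappa^{\cf(\kappa)} \ge \kappa$. Next, $3) \Rightarrow 4)$ is exactly Lemma~\ref{closedandeverywherebigimpliesperfect}: the hypothesis there is precisely that $P$ is strongly closed and every neighborhood of every point of $P$ meets $P$ in $\ge \kappa$ points, and any neighborhood of $f \in P$ contains some $B_{f \restriction \alpha}$, so the two phrasings agree. And $4) \Rightarrow 1)$ is Lemma~\ref{perfecttreeimpliesset} together with the definition of ``perfect set''.

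So the only genuinely new content is $1) \Rightarrow 2)$. Here I would argue as follows. Suppose $P$ is perfect. By definition $P$ is strongly closed, so write $P = [T]$ for the unique suitable tree $T$. I need to show that for every $f \in P$ and every $\alpha < \cf(\kappa)$ we have $|P \cap B_{f \restriction \alpha}| = \kappa^{\cf(\kappa)}$. The set $B_{f \restriction \alpha}$ is a neighborhood of $f$, so by the definition of perfect it contains $\kappa^{\cf(\kappa)}$ elements of $P$; that already gives ``$\ge$''. For ``$\le$'', note $P \cap B_{f \restriction \alpha} \subseteq X \cap B_{f \restriction \alpha} = B_{f \restriction \alpha}$, and $|B_{f \restriction \alpha}| \le |X| = \kappa^{\cf(\kappa)}$, using Definition~\ref{def.2.1}. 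Hence equality.

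Actually, the cleanest packaging is to observe that $1) \Leftrightarrow 4)$ is the combination of Lemmas~\ref{perfecttreeimpliesset} and~\ref{closedandeverywherebigimpliesperfect} (noting that a perfect set automatically satisfies the hypothesis of the latter), and then $2), 3)$ slot in between $4)$ and $1)$ by the trivial size comparisons just described. I expect no real obstacle here; the only point requiring a moment's care is confirming that ``every neighborhood of $f$ contains $\kappa^{\cf(\kappa)}$ elements of $P$'' in the definition of perfect is equivalent to ``$|P \cap B_{f \restriction \alpha}| \ge \kappa$ for all $\alpha$'', which holds because the $B_{f\restriction\alpha}$ are cofinal in the neighborhood filter of $f$ and because any subset of $X$ of size $\ge \kappa$ that is of the form $P \cap B_t$ with $P$ strongly closed is in fact forced up to size $\kappa^{\cf(\kappa)}$ by the argument internal to Lemma~\ref{closedandeverywherebigimpliesperfect} (a pre-perfect subtree below $t$ has $\kappa^{\cf(\kappa)}$ branches). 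This last observation is what upgrades the ``$\ge \kappa$'' in $3)$ to the ``$= \kappa^{\cf(\kappa)}$'' in $2)$ without circularity.
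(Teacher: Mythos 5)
Your proof is correct and matches the paper's intent exactly: the corollary is stated without proof precisely because it is the assembly of Lemma~\ref{perfecttreeimpliesset} and Lemma~\ref{closedandeverywherebigimpliesperfect} with the trivial cardinality comparisons you supply, and your cycle $1)\Rightarrow 2)\Rightarrow 3)\Rightarrow 4)\Rightarrow 1)$ packages this cleanly. One small slip in your closing parenthetical: a \emph{pre-perfect} subtree need not have $\kappa^{\cf(\kappa)}$ branches (that is the point of Counterexample~\ref{uncountablecounterex}); what Lemma~\ref{closedandeverywherebigimpliesperfect} actually produces is a \emph{perfect} tree, and it is Lemma~\ref{perfecttreeimpliesset} applied to that tree that yields the $\kappa^{\cf(\kappa)}$ count --- but your formal chain already routes through these lemmas correctly, so nothing is broken.
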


\begin{lemma}
\label{bigclosedhasperfect}
Assume $\cf(\kappa) = \omega$.
Let $C \subseteq X$ be strongly closed
 and assume $|C| > \kappa$.
Then $C$ has a non-empty perfect subset.
\end{lemma}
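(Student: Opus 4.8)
The plan is to run a single Cantor--Bendixson-style derivative. Since $\cf(\kappa)=\om$, ``strongly closed'' coincides with ``closed'', so by the Fact above I fix the unique non-stopping tree $T\subseteq N$ with $C=[T]$; note $|[T]|=|C|>\kappa$. I then set
$$S:=\{\, t\in T : |[T|t]|>\kappa \,\},$$
the set of nodes whose cone in $T$ still has more than $\kappa$ branches. First I would check that $S$ is a subtree: if $s\sqsubseteq t$ and $t\in S$, then $s\in T$ and $[T|t]\subseteq[T|s]$, so $s\in S$. Also the root (the empty node) lies in $S$ because $|[T]|>\kappa$, so $S\neq\emptyset$. (Strictness of $|C|>\kappa$ is exactly what makes $S$ nonempty.)

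The heart of the argument is two elementary counting facts. \emph{(a)} For each $t\in S$, some child of $t$ in $T$ lies in $S$: write $[T|t]=\bigsqcup_{c\in\mbox{Succ}_T(t)}[T|c]$, a disjoint union of at most $\kappa_{|t|}<\kappa$ sets; if every piece had size $\le\kappa$, then $|[T|t]|\le\kappa_{|t|}\cdot\kappa=\kappa$, a contradiction. Iterating \emph{(a)} from an arbitrary node of $S$ produces a branch of $S$ through that node, so $S$ is non-stopping and $[S]\neq\emptyset$; by the Fact, $[S]$ is closed, hence strongly closed since $\cf(\kappa)=\om$. \emph{(b)} For each $t\in S$, $|[S|t]|>\kappa$: if $f\in[T|t]\setminus[S]$, let $s$ be the shortest initial segment of $f$ with $s\notin S$; then $s\in T\setminus S$, $s\sqsupseteq t$, and $f\in[T|s]$. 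Hence $[T|t]\setminus[S]\subseteq\bigcup\{\,[T|s] : s\in T\setminus S\,\}$, a union of at most $|N|=\kappa$ sets each of size $\le\kappa$, so it has size $\le\kappa$. Since $|[T|t]|>\kappa$ and $[T|t]\cap[S]=[S|t]$, we conclude $|[S|t]|>\kappa$.

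To finish, I would apply Corollary~\ref{majorequiv}(3): $[S]$ is strongly closed, and for every $f\in[S]$ and $\alpha<\om$ we have $f\restriction\alpha\in S$ and $[S]\cap B_{f\restriction\alpha}=[S|(f\restriction\alpha)]$, which by \emph{(b)} has size $>\kappa\ge\kappa$. Therefore $[S]$ is a perfect set, and since $\emptyset\neq[S]\subseteq[T]=C$, it is the required non-empty perfect subset of $C$. (Alternatively, one may appeal directly to Lemma~\ref{closedandeverywherebigimpliesperfect}.)

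I do not anticipate a real obstacle: the only content is the two cardinality estimates, both of which reduce to $|N|=\kappa$ together with the fact that a union of at most $\kappa$ sets of size $\le\kappa$ has size $\le\kappa$ (equivalently $\kappa_n\cdot\kappa=\kappa$), all of which are already established. The one place to be careful is the bookkeeping identities $B_t\cap[T]=[T|t]$ and $[T|t]\cap[S]=[S|t]$: these hold because $t$ is a finite node while every branch has length $\om$, so any branch comparable to $t$ must genuinely extend $t$, and because $[S]\subseteq[T]$.
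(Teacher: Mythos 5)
Your proof is correct, and it takes a genuinely different route from the paper's. The paper constructs a perfect subtree of $T$ directly by recursion: at each stage it finds, below every node chosen so far, a descendant $t$ together with $\kappa_\alpha$ many children $c$ of $t$ satisfying $|[T|c]|>\kappa$, and the union of all these choices is a perfect tree. You instead perform a single Cantor--Bendixson-style derivative, passing to the tree $S$ of all nodes whose cone still has more than $\kappa$ branches, and then verify the hypotheses of Corollary~\ref{majorequiv}(3) (equivalently Lemma~\ref{closedandeverywherebigimpliesperfect}) for $[S]$. Your two counting facts are sound: (a) uses that a cone with more than $\kappa$ branches splits into at most $\kappa_{\dom(t)}<\kappa$ child-cones, so one of them must exceed $\kappa$; (b) uses that the branches of $[T|t]$ lost in passing to $[S]$ are covered by at most $|N|=\kappa$ cones each of size at most $\kappa$. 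The identification of strongly closed with closed, and of $[S]\cap B_{f\restriction\alpha}$ with $[S|(f\restriction\alpha)]$, are both legitimate in the $\cf(\kappa)=\omega$ case. What your approach buys is that you never need to locate $\kappa_\alpha$ many ``good'' children simultaneously below a single node; that work is delegated to Lemma~\ref{closedandeverywherebigimpliesperfect}, whose proof already contains the computation $(\kappa_\alpha)^{\cf(\kappa)}<\kappa$. The paper's construction is more self-contained but redoes by hand a branching argument that the earlier lemma has effectively packaged. Both arguments are confined to $\cf(\kappa)=\omega$ for the same reason: non-stopping and suitable coincide, so your $S$ (which is only guaranteed non-stopping) yields a strongly closed branch set.
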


\begin{proof}
Let $T \subseteq N$ be the (unique) suitable tree
 such that $C = [T]$.
We will construct $T'$ by successively adding
 elements to it, starting with the empty set.
By an argument similar
 to the one used in the previous lemma,
 there must be a node $t_\emptyset \in T$
 such that there is a set
 $S_{t_\emptyset} \subseteq \mathrm{Succ}_T(t_\emptyset)$
 of size $\kappa_0$ such that
 $(\forall c \in S_{t_\emptyset})\,
 [(T|c)] > \kappa$.
Fix $t_\emptyset$ and add it and all its initial segments
 to $T'$.
Next, for each $c \in S_{t_\emptyset}$,
 there must be a node $t_c \in T$
 such that there is a set
 $S_{t_c} \subseteq \mathrm{Succ}_T(t_c)$
 of size $\kappa_1$ such that
 $(\forall d \in S_{t_c})$\,
 $[(T|d)] > \kappa$.
For each $c$,
 fix such a $t_c$
 and add it and all its initial segments to $T'$.
Continue like this.
At a limit stage $\alpha$,
 let $t$ be such that it is not in $T'$
 but all its initial segments are in $T'$.
Find some extension of $t$ in $T$ that has
 $\kappa_\alpha$ appropriate children, etc.
It is clear from the construction that
 $T' \subseteq T$ will be a perfect tree.
\end{proof}




\subsection{Laver-style  Trees}

In this subsection, we assume $\cf(\kappa) = \omega$,
 as this is the only case to which
 the proofs apply.
The results in this subsection are modifications to our setting  of work extracted from \cite{Namba70}, where Namba used the terminology `rich' and `poor' sets.

\begin{definition}
For each $n < \omega$, let
 $\mbb{Q}_n \subseteq \mbb{P}$
 denote the set of $T \in \mbb{P}$ such that
 $\dom( \mathrm{Stem}(T) ) \le n$, and
 for each $m \ge \dom( \mathrm{Stem}(T) )  $
 and $t \in T(m)$,
 $| \mathrm{Succ}_T(t) | = \kappa_m$.
\end{definition}

Note that if $n < m$, then
 $\mbb{Q}_n \subseteq \mbb{Q}_m$.
The set $\mbb{Q}=\bigcup_{n < \omega} \mbb{Q}_n$
 is the collection of ``Laver'' trees.

\begin{definition}
Fix a tree $T \subseteq N$.
We say that $T$
 \defemph{has small splitting at level} $n < \omega$
 iff $(\forall t \in T(n))\,
 | \mathrm{Succ}_T(t) | < \kappa_n$.
A tree is called {\em leafless} if it has no maximal nodes.
We say that $T$ is $n$-\defemph{small} iff
 there is a sequence of leafless trees
 $\langle D_m \subseteq N : m \ge n \rangle$
 such that $[T] \subseteq \bigcup_{m \ge n} [D_m]$
 and each $D_m$ has small splitting at level $m$.
\end{definition}

Note that if $n > m$,
 then $n$-small implies $m$-small.
If $\langle D_m : m \ge n \rangle$
 witnesses that $T$ is $n$-small,
 then without loss of generality
 $D_m \subseteq T$ for all $m \ge n$.

\begin{observation}
Let $m < \omega$.
Let $\mc{D}$ be a collection of trees
 that have small splitting at level $m$.
If $|\mc{D}| < \kappa_m$,
 then $\bigcup \mc{D}$ has small splitting
 at level $m$.
\end{observation}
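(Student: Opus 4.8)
The plan is to unwind the two definitions involved and reduce everything to a single cardinal-arithmetic fact about the regular cardinal $\kappa_m$. First I would note that $\bigcup \mc{D}$ is itself a tree, i.e.\ a subset of $N$ closed under initial segments: each member of $\mc{D}$ has this property, and a union of sets closed under initial segments is again closed under initial segments. So the phrase ``$\bigcup \mc{D}$ has small splitting at level $m$'' is meaningful, and (if $\mc{D}=\emptyset$ the claim is vacuous, so assume $\mc{D}\ne\emptyset$).

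Next, fix an arbitrary node $t \in (\bigcup \mc{D})(m)$, that is, a node of length $m$ lying in at least one member of $\mc{D}$. The key identity is
$$\mbox{Succ}_{\bigcup \mc{D}}(t) \;=\; \bigcup \{ \mbox{Succ}_T(t) : T \in \mc{D} \text{ and } t \in T \},$$
which holds because a node $c \sqsupseteq t$ of length $m+1$ lies in $\bigcup \mc{D}$ iff it lies in some $T \in \mc{D}$, and any such $T$, being closed under initial segments, automatically contains $t$.

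Finally I would bound the right-hand side. By hypothesis each $T \in \mc{D}$ has small splitting at level $m$, so $|\mbox{Succ}_T(t)| < \kappa_m$ for every $T \in \mc{D}$ with $t\in T$; and the index set is contained in $\mc{D}$, so it has size $\le |\mc{D}| < \kappa_m$. Since $\kappa_m$ is regular by Convention~\ref{conventionforpaper}, a union of fewer than $\kappa_m$ many sets, each of cardinality less than $\kappa_m$, has cardinality less than $\kappa_m$; hence $|\mbox{Succ}_{\bigcup \mc{D}}(t)| < \kappa_m$. As $t$ was an arbitrary level-$m$ node of $\bigcup \mc{D}$, this is exactly the statement that $\bigcup \mc{D}$ has small splitting at level $m$. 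There is no genuine obstacle here: the entire content is the regularity of $\kappa_m$, which is part of the standing conventions; the only things to be careful about are that the union of trees is again a tree and that the successor set of $t$ in the union is the union of the successor sets.
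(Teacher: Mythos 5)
Your proof is correct, and it is exactly the routine verification the paper intends: the paper states this as an Observation with no proof, the whole content being the regularity of $\kappa_m$ (guaranteed by Convention~\ref{conventionforpaper}) applied to the identity $\mbox{Succ}_{\bigcup \mc{D}}(t) = \bigcup\{\mbox{Succ}_T(t) : T \in \mc{D},\ t \in T\}$. Your two points of care --- that a union of trees is a tree and that successor sets commute with the union --- are the right ones, and there is no gap.
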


\begin{lemma}\label{lem2.23}
Let $T \subseteq N$ be a tree, let
 $t := \mathrm{Stem}(T)$,
 and let $n := \dom(t)$.
Assume that $T$ is not $n$-small.
Then
 $$E := \{ c \in \mathrm{Succ}_T(t) :
 (T | c) \mbox{ is not $(n+1)$-small} \}$$
 has size $\kappa_n$.
\end{lemma}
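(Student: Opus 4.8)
The plan is to argue by contradiction: assuming $|E| < \kappa_n$, I would show that $T$ is $n$-small, contradicting the hypothesis. Observe first that $\mbox{Succ}_T(t)$ consists of nodes extending $t$ to level $n+1$, so $|\mbox{Succ}_T(t)| \le \kappa_n$ and in particular $|E| \le \kappa_n$ always holds; thus it is enough to rule out $|E| < \kappa_n$, which then forces $|E| = \kappa_n$. Set $F := \mbox{Succ}_T(t) \m E$, so that $T | c$ is $(n+1)$-small for every $c \in F$.

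The starting point is the decomposition $[T] = \bigcup_{c \in \mbox{Succ}_T(t)}[(T|c)]$: every $f \in [T]$ has $f \restriction (n+1) \in T(n+1)$, and since $t = \mbox{Stem}(T)$ this node must be comparable to $t$ and strictly longer, hence a child of $t$; the reverse inclusion is trivial. Splitting this union along the partition $\mbox{Succ}_T(t) = E \cup F$, it suffices to cover $\bigcup_{c \in E}[(T|c)]$ by a single leafless tree with small splitting at level $n$, and to cover $\bigcup_{c \in F}[(T|c)]$ by leafless trees with small splitting at levels $\ge n+1$; concatenating these two families yields a witness that $T$ is $n$-small.

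For the $F$-part: for each $c \in F$, using the remark following the definition of ``$n$-small'', fix a witnessing family $\langle D^c_m : m \ge n+1 \rangle$ with $D^c_m \subseteq (T|c)$, and put $D_m := \bigcup_{c \in F} D^c_m$ for $m \ge n+1$. Each $D_m$ is a subtree of $T$, and it is leafless because a union of leafless trees is leafless. The crucial point is that any node $s \in D_m$ on level $m \ge n+1$ extends the single child $s \restriction (n+1)$ of $t$, so $s$ can lie in $D^c_m$ only for $c = s \restriction (n+1)$; hence $\mbox{Succ}_{D_m}(s) = \mbox{Succ}_{D^c_m}(s)$ has size $< \kappa_m$, i.e.\ $D_m$ has small splitting at level $m$. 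Clearly $\bigcup_{c \in F}[(T|c)] \subseteq \bigcup_{c \in F}\bigcup_{m \ge n+1}[D^c_m] \subseteq \bigcup_{m \ge n+1}[D_m]$.

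For the $E$-part: let $D_n$ be the (leafless) tree of all initial segments of members of $\bigcup_{c \in E}[(T|c)]$; then $D_n \subseteq T$ and $[D_n] = \bigcup_{c \in E}[(T|c)]$. Since $t = \mbox{Stem}(T)$ forces $T(n) = \{t\}$, the only node of $D_n$ on level $n$ is $t$, and its set of children there is a subset of $E$, which has size $< \kappa_n$; so $D_n$ has small splitting at level $n$. (One could instead invoke the Observation preceding the lemma with the collection $\{\, T|c : c \in E \,\}$.) Then $\langle D_m : m \ge n \rangle$ witnesses that $T$ is $n$-small, a contradiction, so $|E| = \kappa_n$. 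I expect the one genuinely delicate step to be the verification that $D_m = \bigcup_{c \in F} D^c_m$ retains small splitting at level $m$; this is exactly where the definition of $E$ via $(n+1)$-smallness (rather than $n$-smallness) is used, since it guarantees $m \ge n+1$ and hence that every level-$m$ node remembers which child of the stem it lies above.
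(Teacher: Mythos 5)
Your proof is correct and follows essentially the same route as the paper's: contradiction from $|E|<\kappa_n$, splitting $\mbox{Succ}_T(t)$ into $E$ and $F$, covering the $E$-part by one tree with small splitting at level $n$ and the $F$-part by unions of witnesses to $(n+1)$-smallness. The only cosmetic difference is that you verify small splitting of $D_m=\bigcup_{c\in F}D^c_m$ directly (each level-$m$ node lies above a unique child of $t$), where the paper instead cites the preceding Observation together with $|F|\le\kappa_n<\kappa_m$; both are fine.
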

\begin{proof}
Towards a contradiction,
 suppose that $|E| < \kappa_n$.
Let $F := \mathrm{Succ}_T(t) - E$.
Let $D_n \subseteq N$ be the set
 $D_n := \bigcup \{ (T|c) : c \in E \}$.
Note that
 $$T = [D_n] \cup
  \bigcup_{c \in F} [T|c].$$
We have that $D_n$ has small splitting
 at level $n$, because $t$ is the only
 node in $D_n \subseteq T$ at level $n$,
 and $\mathrm{Succ}_{D_n}(t) = E$
 has size $< \kappa_n$.

For each $c \in F$,
 let $\langle D^c_m \subseteq (T|c) :
 m \ge n + 1 \rangle$
 be a sequence of trees that witnesses that
 $(T|c)$ is $(n+1)$-small.
For each $m \ge n+1$, let
 $$D_m := \bigcup_{c \in F} D^c_m.$$
Then 
 $$\bigcup_{c \in F} [T|c] =
 \bigcup_{c \in F} \bigcup_{m \ge n+1} [D^c_m] =
 \bigcup_{m \ge n+1} \bigcup_{c \in F} [D^c_m] \subseteq
 \bigcup_{m \ge n+1} [D_m].$$

Consider any $m \ge n+1$.
Since $|F| \le |\mathrm{Succ}_T(t)|
 \le \kappa_n < \kappa_m$
 and each $D^c_m$ has small splitting at level $m$,
 by the observation above
 $D_m$ has small splitting at level $m$.
Thus, we have
 $[T] \subseteq \bigcup_{m \ge n} [D_m]$ and
 each $D_m$ has small splitting at level $m$.
Hence $T$ is $n$-small,
 which is a contradiction.
\end{proof}

\begin{cor}
\label{notsmallimplieslaver}
Let $T \subseteq N$ be a tree,
 let $t := \mathrm{Stem}(T)$, and
 let $n := \dom(t)$.
Assume that $T$ is not $n$-small.
Then there is a subtree $L \subseteq T$
 such that $L \in \mbb{Q}_n$.
\end{cor}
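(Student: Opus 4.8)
The plan is to construct $L \subseteq T$ by a recursion of length $\omega$ (we are in the case $\cf(\kappa) = \omega$), building $L$ one level at a time so that $\mbox{Stem}(L) = t$ and every node of $L$ at a level $m \ge n$ has exactly $\kappa_m$ immediate successors in $L$. Below level $n$ we put into $L$ just the initial segments of $t$, and we set $L(n) = \{t\}$. Throughout the construction we maintain the invariant that for each $m \ge n$ and each $s \in L(m)$, the tree $(T|s)$ is \emph{not} $m$-small. This holds at $m = n$: since $t = \mbox{Stem}(T)$, every node of $T$ is comparable to $t$, so $(T|t) = T$, which is not $n$-small by hypothesis.

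For the recursion step, fix $m \ge n$ and $s \in L(m)$ with $(T|s)$ not $m$-small. The key observation is that the proof of Lemma~\ref{lem2.23} nowhere uses that its distinguished node $t$ is the stem of $T$: the identical argument shows that for \emph{any} node $s \in T$ with $\dom(s) = m$ such that $(T|s)$ is not $m$-small, the set $\{ d \in \mbox{Succ}_T(s) : (T|d) \mbox{ is not $(m+1)$-small} \}$ has size $\kappa_m$ (hence exactly $\kappa_m$, since $|\mbox{Succ}_T(s)| \le \kappa_m$ always). Let $\mbox{Succ}_L(s)$ be this set; then each $d \in L(m+1)$ again satisfies the invariant, and the recursion proceeds.

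Let $L$ be the resulting tree. Since $\kappa_n \ge 2$, the node $t$ splits in $L$, while below $t$ the tree $L$ is a single branch; hence $\mbox{Stem}(L) = t$ and $\dom(\mbox{Stem}(L)) = n \le n$. By construction, every node of $L$ at a level $m \ge n$ has exactly $\kappa_m$ immediate successors in $L$, so $L \in \mbb{Q}_n$ as soon as we know $L \in \mbb{P}$, i.e.\ that $L$ is a perfect tree. But $L$ is non-stopping — every node of $L$ extends to an element of $[L]$, since every node at a level $\ge n$ has a successor in $L$ and every node below level $n$ extends to $t$ — and hence suitable because $\cf(\kappa) = \omega$; and $L$ is pre-perfect, since any given node of $L$ can be extended to a node at any prescribed level $m \ge \max(\alpha, n)$, which has $\kappa_m \ge \kappa_\alpha$ successors. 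Thus $L \in \mbb{P}$ and $L \in \mbb{Q}_n$. The only point requiring care is that the recursion must invoke Lemma~\ref{lem2.23} at nodes other than the stem of $T$ — nodes not known a priori to split in $T$ — but, as noted, the lemma's proof is insensitive to this; in effect, non-$m$-smallness of $(T|s)$ already forces $s$ to have at least $\kappa_m$ successors in $T$, so no ``non-splitting tail'' can obstruct the construction. Everything else is routine bookkeeping.
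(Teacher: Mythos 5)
Your proof is correct and follows essentially the same level-by-level recursion as the paper's: at each node $s \in L(m)$ one keeps exactly the $\kappa_m$ successors $d$ with $(T|d)$ not $(m+1)$-small, invoking Lemma~\ref{lem2.23}. Your added observation that non-$m$-smallness of $(T|s)$ already forces $s$ to have $\kappa_m$ successors in $T$ (so $s$ is the stem of $(T|s)$ and the lemma applies verbatim at non-stem nodes of $T$) is a point the paper's proof leaves implicit.
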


\begin{proof}
We will construct $L$ by induction.
For each $m \le n$,
 let $L(m) := \{ t \restriction m \}$.
Let $L(n+1)$ be the set of $c \in \mathrm{Succ}_T(t)$
 such that $(T|c)$ is not $(n+1)$-small.
By Lemma \ref{lem2.23},
 $|\mathrm{Succ}_T(t)| = \kappa_n$.
Let $L(n+2)$ be the set of nodes of the form
 $c \in \mathrm{Succ}_T(u)$ for $u \in L(n+1)$
 such that $(T|c)$ is not $(n+2)$-small.
Again by Lemma  \ref{lem2.23},
 for each $u \in L(n+1)$,
 since $(T|u)$ is not $(n+1)$-small,
 $|\mathrm{Succ}_L(u)| = \kappa_{n+1}$.
Continuing in this manner,
 we obtain $L \subseteq T$, and it has the property that
 for each $m \ge n$ and $t \in L(m)$,
 $|\mathrm{Succ}_L(t)| = \kappa_m$.
Thus, $L \in \mbb{Q}_n$.
\end{proof}

\begin{lemma}\label{lem.2.25}
Fix $n < \omega$ and
 let $L \in \mbb{Q}_n$.
Then $L$ is not $n$-small.
\end{lemma}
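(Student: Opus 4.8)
The plan is a direct diagonalization: assuming $L$ were $n$-small, I would build a branch $f\in[L]$ that escapes every $[D_m]$. So suppose toward a contradiction that $\langle D_m\subseteq N:m\ge n\rangle$ is a sequence of leafless trees witnessing that $L$ is $n$-small; by the remark following the definition of ``$n$-small'' we may assume $D_m\subseteq L$ for all $m\ge n$, so $[L]\subseteq\bigcup_{m\ge n}[D_m]$ and each $D_m$ has small splitting at level $m$, i.e.\ $|\mbox{Succ}_{D_m}(t)|<\kappa_m$ for every $t\in D_m(m)$.

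The key point is that $L\in\mathbb{Q}_n$ gives $\dom(\mbox{Stem}(L))\le n$ and $|\mbox{Succ}_L(t)|=\kappa_m$ for every $m\ge n$ and every $t\in L(m)$; in particular $L(m)\ne\emptyset$ for all $m\ge n$. I would construct an increasing sequence $\langle t_m:m\ge n\rangle$ with $t_m\in L(m)$ as follows. Pick any $t_n\in L(n)$. Given $t_m\in L(m)$ with $m\ge n$: if $t_m\in D_m$, then $t_m\in D_m(m)$, so $|\mbox{Succ}_{D_m}(t_m)|<\kappa_m=|\mbox{Succ}_L(t_m)|$, and hence there is a child $t_{m+1}\in\mbox{Succ}_L(t_m)\setminus\mbox{Succ}_{D_m}(t_m)$; choose such a $t_{m+1}$. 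If $t_m\notin D_m$, let $t_{m+1}$ be any element of $\mbox{Succ}_L(t_m)$. Put $f:=\bigcup_{m\ge n}t_m$. Since each $t_m\in L\subseteq N$ and $f\restriction m=t_m$ for all $m\ge n$, we get $f\in X$ and $f\in[L]$.

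Finally I would check $f\notin[D_m]$ for every $m\ge n$, which contradicts $[L]\subseteq\bigcup_{m\ge n}[D_m]$. Fix $m\ge n$. If $f\restriction m=t_m\notin D_m$, then since $D_m$ is a tree (closed under initial segments), $f\notin[D_m]$. If $f\restriction m=t_m\in D_m$, then by construction $t_{m+1}=f\restriction(m+1)\notin D_m$, and again, since $D_m$ is closed under initial segments, $f\notin[D_m]$. In either case $f\notin[D_m]$, completing the contradiction.

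I do not expect a genuine obstacle here; the only thing to be careful about is that the splitting hypothesis on $D_m$ is used exactly at level $m$, which is why it suffices to ``defeat'' $D_m$ at the single stage $m$, and that the full-branching property of $L$ is available at every level $m\ge n$ precisely because $\dom(\mbox{Stem}(L))\le n$. No appeal to Assumption~\ref{basicassumption} or to cardinal arithmetic is needed.
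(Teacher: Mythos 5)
Your proposal is correct and is essentially the paper's own argument: a diagonalization building a branch $t_n\sqsubseteq t_{n+1}\sqsubseteq\cdots$ through $L$ that escapes each $[D_m]$ by defeating $D_m$ at level $m$, using that $|\mbox{Succ}_{D_m}(t)|<\kappa_m$ while every node of $L$ at level $m\ge n$ has exactly $\kappa_m$ successors. Your explicit handling of the case $t_m\notin D_m$ is a minor (and welcome) extra precision over the paper's write-up, but the proof is the same.
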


\begin{proof}
Suppose, towards a contradiction,
 that there is a sequence of leafless trees
 $\langle D_m \subseteq L : m \ge n \rangle$
 such that $[L] \subseteq \bigcup_{m \ge n} [D_m]$
 and each $D_m$ has small splitting at level $m$.
Let $t_n \in L(n)$ be arbitrary.
We will define a sequence of nodes
 $\langle t_m \in L(m) : m \ge n \rangle$
 such that $t_n \sqsubseteq t_{n+1} \sqsubseteq ...$
 and $(\forall m \ge n)\,
 [D_m] \cap B_{t_{m+1}} = \emptyset$.
If we let $x \in [L]$ be
the union of this sequence of $t_n$'s,
 then since $\{x\} = \bigcap_{m \ge n} B_{t_{m+1}}$,
 we will have $x \not\in \bigcup_{m \ge n} [D_m]$,
 so $[L] \not\subseteq \bigcup_{m \ge n} [D_m]$,
 which is a contradiction.

Define $t_{n+1}$ to be any successor of $t_n$ in $L$
 such that $t_{n+1} \not\in D_n$.
This is possible because $D_n$ has small splitting
 at level $n$ and $t$ has $\kappa_n$ successors in $L$.
We have $[D_n] \cap B_{t_{n+1}} = \emptyset$.
Next, define $t_{n+2}$ to be any successor of $t_{n+1}$ in $L$
 such that $t_{n+2} \not\in D_{n+1}$.
Continuing in this manner yields 
 the desired sequence
 $\langle t_m : m\ge n \rangle$.
\end{proof}

\begin{proposition}\label{prop.clear}
Fix $n < \omega$.
If $\mc{T}$ is a collection of $n$-small trees
 and $|\mc{T}| < \kappa_n$, then
 $\bigcup \mc{T}$ is an $n$-small tree.
\end{proposition}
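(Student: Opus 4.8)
The plan is to unwind the definition of $n$-small and combine the witnessing families, using the cardinality bound $|\mathcal{T}| < \kappa_n$ together with the Observation on small splitting at level $m$. First I would fix, for each $T \in \mathcal{T}$, a sequence of leafless trees $\langle D^T_m \subseteq N : m \ge n \rangle$ witnessing that $T$ is $n$-small, so that $[T] \subseteq \bigcup_{m \ge n} [D^T_m]$ and each $D^T_m$ has small splitting at level $m$. Then for each $m \ge n$ I would set $D_m := \bigcup_{T \in \mathcal{T}} D^T_m$. The containment part is routine: since $[\bigcup \mathcal{T}] = \bigcup_{T \in \mathcal{T}} [T] \subseteq \bigcup_{T \in \mathcal{T}} \bigcup_{m \ge n} [D^T_m] = \bigcup_{m \ge n} \bigcup_{T \in \mathcal{T}} [D^T_m] \subseteq \bigcup_{m \ge n} [D_m]$, using that a branch through a union of trees is a branch through one of them.

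The key step is checking that each $D_m$ still has small splitting at level $m$. Here I would invoke the Observation preceding Lemma~\ref{lem2.23}: a union of fewer than $\kappa_m$ trees, each with small splitting at level $m$, again has small splitting at level $m$. Since $|\mathcal{T}| < \kappa_n \le \kappa_m$ for every $m \ge n$ (the sequence $\langle \kappa_m \rangle$ is increasing), the hypothesis of the Observation is satisfied for each such $m$, so $D_m$ has small splitting at level $m$. I should also note each $D_m$ is leafless, being a union of leafless trees (a node with a successor in some $D^T_m$ has a successor in $D_m$), and that $\bigcup \mathcal{T}$ is itself a tree, being a union of trees closed under initial segments. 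Assembling these observations, $\langle D_m : m \ge n \rangle$ witnesses that $\bigcup \mathcal{T}$ is $n$-small.

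The main (and only mild) obstacle is the bookkeeping point that $|\mathcal{T}| < \kappa_n$ must be leveraged at \emph{every} level $m \ge n$, not just at level $n$; this is immediate from the monotonicity $\kappa_n \le \kappa_m$, but it is the one place where the statement would fail if one only knew $|\mathcal{T}| < \kappa$ rather than $|\mathcal{T}| < \kappa_n$. Everything else is a direct unravelling of definitions, so the proof is short.
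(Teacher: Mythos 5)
Your proof is correct and is essentially the paper's own argument: the paper likewise takes $D_m := \bigcup_{T \in \mathcal{T}} D^T_m$ and observes that this family witnesses $n$-smallness of $\bigcup \mathcal{T}$, with the small-splitting check resting on the same Observation and the monotonicity $|\mathcal{T}| < \kappa_n \le \kappa_m$. Your write-up just makes explicit the details the paper leaves to the reader.
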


\begin{proof}
For each $T \in \mc{T}$,
 let $\langle D^T_m : m \ge n \rangle$ witness that $T$
 is $n$-small.
Then $\langle \bigcup_{T \in \mc{T}} D^T_m : m \ge n \rangle$
 witnesses that $\bigcup \mc{T}$ is $n$-small.
\end{proof}

\begin{cor}\label{cor.2.27}
Fix $n < \omega$.
If $\{ [T] : T \in \mc{T} \}$ is a partition of $X$
 into $< \kappa_n$ closed sets, then at least one
 of the trees $T \in \mc{T}$ is not $n$-small.
\end{cor}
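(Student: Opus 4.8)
The plan is to argue by contradiction, reducing immediately to the two preceding results. Suppose every tree $T \in \mc{T}$ is $n$-small. Since $|\mc{T}| < \kappa_n$, Proposition~\ref{prop.clear} gives that $\bigcup \mc{T}$ is an $n$-small tree; fix leafless trees $\langle D_m \subseteq N : m \ge n \rangle$ witnessing this, so that each $D_m$ has small splitting at level $m$ and $[\bigcup \mc{T}] \subseteq \bigcup_{m \ge n} [D_m]$.

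Next I would observe that the partition hypothesis forces $\bigcup_{m \ge n}[D_m]$ to swallow all of $X$. Indeed, for each $T \in \mc{T}$ we have $T \subseteq \bigcup \mc{T}$, hence $[T] \subseteq [\bigcup \mc{T}]$; since $\{[T] : T \in \mc{T}\}$ partitions $X$, this yields $X = \bigcup_{T \in \mc{T}}[T] \subseteq [\bigcup \mc{T}] \subseteq \bigcup_{m \ge n}[D_m]$.

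Finally, pick any $L \in \mbb{Q}_n$ (for instance the full tree $N \in \mbb{Q}_0 \subseteq \mbb{Q}_n$; one could also note $\mbb{Q}_n \neq \emptyset$ directly from the definition). Then $[L] \subseteq X \subseteq \bigcup_{m \ge n}[D_m]$, so the very same sequence $\langle D_m : m \ge n\rangle$ witnesses that $L$ is $n$-small. This contradicts Lemma~\ref{lem.2.25}, which says no element of $\mbb{Q}_n$ is $n$-small. Hence some $T \in \mc{T}$ fails to be $n$-small.

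There is essentially no hard step here: the content is already packaged in Proposition~\ref{prop.clear} and Lemma~\ref{lem.2.25}, and the only thing to be careful about is the passage from the partition to the inclusion $X \subseteq [\bigcup\mc{T}]$, together with the trivial remark that $n$-smallness of a tree depends only on its set of branches, so that covering all of $X$ makes every subtree $n$-small.
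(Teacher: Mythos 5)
Your proof is correct and follows essentially the same route as the paper: assume all trees are $n$-small, apply Proposition~\ref{prop.clear} to the union, and derive a contradiction with Lemma~\ref{lem.2.25} via $N \in \mbb{Q}_n$. The only cosmetic difference is that the paper asserts $\bigcup \mc{T} = N$ outright, whereas you work at the level of branch sets ($X \subseteq \bigcup_{m \ge n}[D_m]$), which sidesteps that small identification; both are fine.
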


\begin{proof}
Suppose that each $T \in \mc{T}$ is $n$-small.
Then by 
Proposition \ref{prop.clear},
 $\bigcup_{T \in \mc{T}} T = N$ is $n$-small.
However, $N$ cannot be $n$-small by 
Lemma \ref{lem.2.25},
as $N$ is a member of  $ \mbb{Q}_n$.
\end{proof}

We do not know if this next lemma
 has an analogue
 for the $\cf(\kappa) > \omega$ case because
 of a Bernstein set phenomenon.

\begin{lemma}\label{lem.psi}
\label{onlysomanypieces}
Assume $\cf(\kappa) = \omega$.
Fix $n < \omega$.
Suppose $\Psi: N \to \kappa_n$.
Given $h : \omega \to \kappa_n$,
 let $C_h \subseteq X$ be the set of all
 $f \in X$ such that
 $$(\forall k < \omega)\,
 \Psi( f \restriction k ) = h(k).$$
Then for some $h$,
 there is an $L \in \mbb{Q}_m$  
 such that $[L] \subseteq C_h$, where $m$ satisfies $\kappa_m>(\kappa_n)^{\om}$.
\end{lemma}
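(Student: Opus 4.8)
The plan is to apply the counting results of the Laver-style trees subsection to the partition of $X$ given by the sets $C_h$. First I would fix an index $m_0$ with $\kappa_{m_0}>(\kappa_n)^\omega$; such an $m_0$ exists because $(\kappa_n)^\omega<\kappa$ by Assumption~\ref{basicassumption} and $\kappa=\sup_i\kappa_i$. Next, observe that as $h$ ranges over the functions from $\omega$ to $\kappa_n$, the sets $C_h$ form a partition of $X$ into at most $(\kappa_n)^\omega<\kappa_{m_0}$ pieces, and each $C_h$ is closed: we have $C_h=\bigcap_{k<\omega}\{f\in X:\Psi(f\restriction k)=h(k)\}$, and each set in this intersection is clopen, being the union of the basic clopen sets $B_t$ over those $t\in N(k)$ with $\Psi(t)=h(k)$. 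For each nonempty $C_h$, let $S_h\subseteq N$ be the unique non-stopping tree with $[S_h]=C_h$.

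Now I would invoke Corollary~\ref{cor.2.27} with $m_0$ in place of $n$: since $\{[S_h]:C_h\neq\emptyset\}$ is a partition of $X$ into fewer than $\kappa_{m_0}$ closed sets, some $S_h$ is not $m_0$-small. Fix such an $h$. To control the stem of the Laver tree we will extract, I would descend to level $m_0$. Since $|S_h(m_0)|\le|N(m_0)|=\prod_{i<m_0}\kappa_i<\kappa_{m_0}$ and $S_h=\bigcup_{u\in S_h(m_0)}(S_h|u)$, the contrapositive of Proposition~\ref{prop.clear} yields some $u\in S_h(m_0)$ for which $S_h|u$ is not $m_0$-small. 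The tree $S_h|u$ is non-stopping and none of its nodes strictly below $u$ is splitting, so its stem — which exists because $S_h|u$, being not $m_0$-small, has more than one branch — lies at some level $j\ge m_0$; and a tree that is not $m_0$-small is not $j$-small for any $j\ge m_0$, so $S_h|u$ is not $j$-small. Corollary~\ref{notsmallimplieslaver}, applied to $S_h|u$, therefore produces a subtree $L\subseteq S_h|u$ with $L\in\mbb{Q}_j$. Taking $m:=j$ we have $\kappa_m=\kappa_j\ge\kappa_{m_0}>(\kappa_n)^\omega$ and $[L]\subseteq[S_h|u]\subseteq[S_h]=C_h$, as required.

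The one point that genuinely needs care is the stem of the extracted Laver tree: Corollary~\ref{notsmallimplieslaver} produces a condition in $\mbb{Q}_j$, where $j$ is the stem level of the tree it is handed, so to land in $\mbb{Q}_m$ for an admissible $m$ we must make sure this stem level is at least $m_0$. Passing first to a level-$m_0$ node $u$ with $S_h|u$ still not $m_0$-small achieves exactly that, and since the conclusion only demands \emph{some} $m$ with $\kappa_m>(\kappa_n)^\omega$ rather than a prescribed one, we are free to take $m:=j$. The remaining points — that $n$-smallness of a tree depends only on its set of branches, that $S_h=\bigcup_{u\in S_h(m_0)}(S_h|u)$ because $S_h$ is non-stopping, and that the finite product $\prod_{i<m_0}\kappa_i$ stays below the regular cardinal $\kappa_{m_0}$ — are routine.
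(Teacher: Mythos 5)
Your proof is correct and follows the same route as the paper's: partition $X$ into the closed sets $C_h$, note there are at most $(\kappa_n)^\omega<\kappa_{m_0}$ nonempty ones, apply Corollary~\ref{cor.2.27} to find a piece that is not $m_0$-small, and extract a Laver tree via Corollary~\ref{notsmallimplieslaver}. The one extra step you add --- passing first to a level-$m_0$ node $u$ with $S_h|u$ still not $m_0$-small, so that the stem of the tree handed to Corollary~\ref{notsmallimplieslaver} sits at a level $j\ge m_0$ --- is a genuine refinement rather than a detour: the paper's two-line proof applies that corollary directly to the tree of $C_h$ and asserts $L\in\mbb{Q}_m$, glossing over the fact that the corollary yields $\mbb{Q}_j$ only for $j$ the stem level and only under non-$j$-smallness, which does not follow from non-$m$-smallness when $j<m$. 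Your existential reading of ``where $m$ satisfies $\kappa_m>(\kappa_n)^\omega$'' is the one consistent with how the lemma is used later, where only $L\in\mbb{Q}=\bigcup_k\mbb{Q}_k$ is needed.
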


\begin{proof}
It is straightforward to see that each
 set $C_h$ is strongly closed (and hence closed).
Let $m < \omega$ be such that 
$(\kappa_n)^{\om}<\kappa_m$.
 Such an $m$ exists 
by Assumption \ref{basicassumption}.
By Corollary \ref{cor.2.27},
 one of the sets $C_h = [T]$ must be such that
 $T$ is not $m$-small.
By Corollary~\ref{notsmallimplieslaver},
 there is some tree $L \subseteq T$
 such that $L \in \mbb{Q}_m$.
\end{proof}



\subsection{Strong Splitting Normal Form}

\begin{observation}
\label{embeddingobs}
Let $T \in \mbb{P}$.
There is an embedding
 $F : N \to T$, meaning that
 $(\forall t_1, t_2 \in N)$,
\begin{itemize}
\item $t_1 = t_2 \Leftrightarrow
 F(t_1) = F(t_2)$;
\item $t_1 \sqsubseteq t_2 \Leftrightarrow
 F(t_1) \sqsubseteq F(t_2)$;
\item $t_1 \perp t_2 \Leftrightarrow
 F(t_1) \perp F(t_2)$.
\end{itemize}
From this, it follows by induction that if
 $t \in N$ is on level $\alpha < \cf(\kappa)$,
 then $F(t)$ is on level $\beta$
 for some $\beta \ge \alpha$.
It follows that given any $f \in [N]$,
 there is exactly one $g \in [T]$
 that has all the nodes
 $F(f \restriction \alpha)$
 for $\alpha < \cf(\kappa)$ as initial segments.

Given a set $S \subseteq N$,
 let $I(S)$ be the set of all initial
 segments of elements of $S$.
If $H \subseteq N$ is a perfect tree, then
 $I(F``(H)) \subseteq T$ is a perfect tree.
If $H_1, H_2 \subseteq N$ are trees such that
 $[H_1] \cap [H_2] = \emptyset$, then
 $[I(F``(H_1))] \cap [I(F``(H_2))] = \emptyset$.
\end{observation}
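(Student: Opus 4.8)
The plan is to build $F$ by recursion on the levels of $N$ and then read off every clause of the statement from the construction. Throughout the recursion I maintain the inductive invariants that, after stage $\al$, $F$ is defined on all nodes of $N$ of level $\le\al$, that $F$ preserves $\sqsubseteq$, that $F$ sends distinct siblings of $N$ to $\perp$-incomparable nodes of $T$, and that the length of $F(t)$ is at least the level of $t$ and strictly increases along every single extension (hence along every chain). For the base step put $F(\emptyset):=\emptyset$, which lies in $T$ since $T$ is a nonempty tree. At a successor step, for each $s\in N$ of level $\al$, invoke pre-perfectness of $T$ (Definition~\ref{defn.2.4}) to choose some $s^\ast\sqsupseteq F(s)$ in $T$ with $|\mbox{Succ}_T(s^\ast)|\ge\kappa_\al$, and define $F$ on the $\kappa_\al$ children of $s$ in $N$ via an injection into $\mbox{Succ}_T(s^\ast)$; the invariants are immediate, since distinct children of $s^\ast$ are pairwise $\perp$.

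At a limit step $\al$ (which arises only when $\cf(\kappa)>\om$), for each $s\in N$ of level $\al$ the nodes $F(s\restriction\beta)$, $\beta<\al$, form a $\sqsubseteq$-increasing chain in $T$, and its union $g_s$ has length $\sup_{\beta<\al}|F(s\restriction\beta)|$. By regularity of $\cf(\kappa)$ this length is $<\cf(\kappa)$, and it is $\ge\al$; moreover every proper initial segment of $g_s$ is an initial segment of some $F(s\restriction\beta)$ and so lies in $T$. I then set $F(s):=g_s$, and the point is that $g_s\in T$ because $T$ is \emph{suitable}: if $g_s\notin T$ then $\{g_s\restriction\gamma:\gamma<|g_s|\}$ would be a maximal chain of $T$ of length below $\cf(\kappa)$, i.e.\ a branch shorter than $\cf(\kappa)$, contradicting suitability. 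This limit step is the one place where real care is needed, and it is precisely what forces us to work with suitable trees rather than merely non-stopping ones; when $\cf(\kappa)=\om$ there are no limit levels below $\om$ and the recursion is entirely routine.

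With $F$ constructed, the three biconditionals follow formally. The forward directions are built into the invariants. For the reverse directions, if $F(t_1)\sqsubseteq F(t_2)$ then $t_1\perp t_2$ is impossible (it would force $F(t_1)\perp F(t_2)$) and $t_2\sqsubsetneq t_1$ is impossible (it would force $|F(t_2)|<|F(t_1)|$), so $t_1\sqsubseteq t_2$; the equality and incomparability biconditionals follow similarly. The assertion that $F(t)$ lies on a level $\ge$ the level of $t$ is exactly the length invariant (equivalently, it follows by the stated induction on levels from injectivity and $\sqsubseteq$-preservation). For $f\in[N]$, the union $g:=\bigcup_{\al<\cf(\kappa)}F(f\restriction\al)$ has length exactly $\cf(\kappa)$, since the lengths $|F(f\restriction\al)|$ are cofinal in $\cf(\kappa)$ but never attain it, and each $g\restriction\gamma$ is an initial segment of some $F(f\restriction\al)\in T$, so $g\in[T]$; it is the unique such element, since any $g'\in[T]$ extending all $F(f\restriction\al)$ must extend $g$ and has the same length.

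Finally, fix a perfect tree $H\subseteq N$. Then $I(F``(H))$ is a tree contained in $T$ by construction. It is non-stopping: given $v\sqsubseteq F(t)$ with $t\in H$, extend $t$ to some $f\in[H]$ and note that $\bigcup_{\al}F(f\restriction\al)$ lies in $[I(F``(H))]$ and extends $v$. It is pre-perfect: given such $v$ and $\al<\cf(\kappa)$, use pre-perfectness of $H$ to get $t'\sqsupseteq t$ in $H$ with $|\mbox{Succ}_H(t')|\ge\kappa_\al$; in the construction these children of $t'$ were mapped to distinct children of the node $(t')^\ast\sqsupseteq F(t')$, and $(t')^\ast\in I(F``(H))$ is an extension of $v$ having $\ge\kappa_\al$ children in $I(F``(H))$. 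It is suitable: any short branch $w$ of $I(F``(H))$ has all proper initial segments in $T$, hence $w\in T$ by suitability of $T$, and a closure argument mirroring the limit step — pulling the approximating nodes back to a chain in $H$ and using suitability of $H$ — shows $w\in I(F``(H))$, a contradiction; alternatively, one checks that $[I(F``(H))]$ is strongly closed and everywhere of size $\kappa^{\cf(\kappa)}$ and applies Corollary~\ref{majorequiv}. Thus $I(F``(H))$ is perfect. And if $H_1,H_2\subseteq N$ are trees with $[H_1]\cap[H_2]=\emptyset$, a common element $g$ of $[I(F``(H_1))]$ and $[I(F``(H_2))]$ would produce incomparable $t_1\in H_1$, $t_2\in H_2$ with $F(t_1)\sqsubseteq g$ and $F(t_2)\sqsubseteq g$, so $F(t_1)$ and $F(t_2)$ comparable, contradicting that $F$ preserves $\perp$; hence $[I(F``(H_1))]\cap[I(F``(H_2))]=\emptyset$. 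As noted, the only genuinely delicate point throughout is the convergence of chains at limit levels, which is handled by suitability.
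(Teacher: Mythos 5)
Your construction is the same as the paper's: define $F$ by recursion on levels, setting $F(\emptyset)=\emptyset$, using pre-perfectness at successor stages to pick $s^\ast\sqsupseteq F(u)$ with $\ge\kappa_\beta$ successors and mapping children injectively into $\mbox{Succ}_T(s^\ast)$, and taking unions at limit stages. The paper leaves the limit-stage membership $g_s\in T$ and the derived claims as "follow easily," whereas you verify them explicitly (correctly using suitability, read as the absence of maximal chains of length $<\cf(\kappa)$, to close off the limit step), so your write-up is a more detailed version of the same argument.
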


\begin{proof}
To construct the embedding $F$,
 first define $F(\emptyset) = \emptyset$.
Now fix $\alpha < \cf(\kappa)$ and suppose
 $F(u)$ has been defined for all
 $u \in \bigcup_{\gamma < \alpha} N(\gamma)$.
If $\alpha$ is a limit ordinal
 and $t \in N(\alpha)$,
 define $F(t)$ to be
 $\bigcup_{\gamma < \alpha} F(t \restriction \gamma)$.
If $\alpha = \beta + 1$,
 fix $u \in N(\beta)$.
Fix $s \sqsupseteq F(u)$
 such that $s$ has $\ge \kappa_\beta$ successors in $T$.
For each $\sigma < \kappa_\beta$,
 define $F(u ^\frown \sigma)$ to be the
 $\sigma$-th successor of $s$ in $T$.
The rest of the claims in the observation
 follow easily.
\end{proof}

\begin{proposition}
\label{cangetstrongnormalform}
For each $T \in \mbb{P}$,
 there is some $T' \le T$
 in strong splitting normal form.
\end{proposition}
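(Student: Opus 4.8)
Since $\cf(\kappa)=\omega$ in this subsection, strong splitting normal form refers to pinning, for every $n$, the whole set of $n$-splitting nodes to a single level $l_n$. As noted after Definition~\ref{defn.2.7}, every condition has a refinement in medium splitting normal form, so it suffices to treat $T$ already in medium splitting normal form; the point of this is that below any splitting node of $T$ the minimal splitting descendants already lie on a common level, so the \emph{first} layer of splitting below a node comes synchronized for free. The plan is to build, by recursion on $n<\omega$, an increasing sequence of levels $l_0<l_1<\dots$ and a decreasing sequence of conditions $T=T_{-1}\ge T_0\ge T_1\ge\cdots$, each in medium splitting normal form, with a growing committed initial part, such that for each $n$ the nodes of $T_n$ on level $l_k$ are precisely the $k$-splitting nodes of $T_n$ for every $k\le n$. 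Setting $T':=\bigcap_n T_n$, one checks that $T'$ is pre-perfect (it $\kappa_n$-splits at level $l_n$ and $\kappa_n\to\kappa$), suitable (every branch meets every $l_n$, hence is cofinal in $\omega$), so $T'\in\mbb{P}$, $T'\le T$, and $\{l_n:n<\omega\}$ witnesses that $T'$ is in strong splitting normal form.

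\textbf{The recursion step.} Passing from $T_n$ to $l_{n+1}$ and $T_{n+1}$ rests on a pigeonhole fact that uses precisely the Convention that each $\kappa_m$ is \emph{regular} with $\kappa_m>\cf(\kappa)=\omega$: if a node $v$ of $T_n$ is $\ge\kappa_m$-splitting, then each of its $\ge\kappa_m$ children has $\ge\kappa_{m+1}$-splitting descendants cofinally often in $\omega$, and since the supremum of $\aleph_0$-many cardinals below the regular cardinal $\kappa_m$ is still below $\kappa_m$, there are cofinally many single levels $\ell$ at which at least $\kappa_m$ of the children of $v$ have a $\ge\kappa_{m+1}$-splitting descendant. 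Iterating this fact upward from the $n$-splitting nodes of $T_n$ (on level $l_n$) through the $(n-1)$-splitting nodes, the $(n-2)$-splitting nodes, and so on down to the stem, one produces a common level $l_{n+1}>l_n$ together with, at each splitting node on each level $l_k$, a large enough set of children whose successors can all be steered to a $\ge\kappa_{n+1}$-splitting node on level $l_{n+1}$. Keeping exactly these children and these steerings (and a single branch through each intermediate non-$l_k$ level), and renormalizing above level $l_{n+1}$, yields $T_{n+1}\le T_n$ with its first $n+1$ splitting layers pinned to $l_0,\dots,l_{n+1}$ and agreeing with $T_n$ on the committed part.

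\textbf{Main obstacle.} The real difficulty is exactly this synchronization: the \emph{single} level $l_{n+1}$ must simultaneously serve as the $(n+1)$-st splitting level for all $\kappa_{n-1}$-many splitting nodes currently on level $l_n$, and there may be uncountably many of them, so one cannot just choose a good level for each and take a supremum. This forces the construction to be run as a fusion in which a splitting level is frozen only after enough of the tree above it has been secured: the upward telescoping of the pigeonhole fact is available at stage $n$ only because the not-yet-frozen part of $T_n$ has been pruned at earlier stages to retain solely children with sufficiently rich futures, and medium splitting normal form is what makes the innermost step of this telescoping cost nothing. Verifying that this bookkeeping invariant can be carried through every stage, and that the fusion stabilizes on each level so that $\bigcap_n T_n$ is genuinely perfect, is the technical heart of the argument; once that is in place, the properties of $T'$ listed in the setup are immediate.
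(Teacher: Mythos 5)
There is a genuine gap, and it sits exactly where you yourself locate ``the technical heart of the argument.'' Your recursion step needs a \emph{single} level $l_{n+1}$ serving simultaneously as the $(n+1)$-st splitting level above every one of the uncountably many $n$-splitting nodes on level $l_n$. The pigeonhole you invoke (regularity of $\kappa_m$ against countably many candidate levels) handles one node at a time; to merge the answers across nodes you are driven to the ``downward telescoping'' you gesture at, in which, for each $k\le n$, each $k$-splitting node keeps only those $\kappa_k$-many of its children whose towers report a common level. But this re-prunes the child-set of every already-committed splitting node at \emph{every} subsequent stage, and the discarded set can each time have full size $\kappa_k$ (pigeonhole guarantees one large fiber, not a small complement). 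A decreasing $\omega$-sequence of size-$\kappa_k$ subsets of a size-$\kappa_k$ set can have empty intersection, so $\bigcap_n T_n$ need not be perfect -- this is precisely the failure of ``the fusion stabilizes on each level,'' which you assert but do not establish. The alternative -- freezing the committed part and demanding one level good for all frontier nodes at once -- also fails: for each frontier node $v$ the set $G_v$ of levels $\ell$ at which $\kappa_n$-many of its children have a $\ge\kappa_{n+1}$-splitting descendant is merely an infinite subset of $\omega$, and an intersection of $\kappa_{n-1}$-many infinite subsets of $\omega$ can be empty; nothing in the definition of a perfect tree prevents arranging exactly that.

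The paper sidesteps the synchronization problem with a global, Namba-style argument rather than a level-by-level fusion: fix an embedding $F:N\to T$ (Observation \ref{embeddingobs}), color each $u\in N$ by the level $\dom(F(u))$ of its image, and note that the branches of $N$ are thereby partitioned into the $2^{\aleph_0}<\kappa$ closed sets $C_h$ for $h\in{}^{\omega}\omega$ of Lemma \ref{onlysomanypieces}; by the $n$-smallness analysis (Corollary \ref{cor.2.27}) one of these pieces contains a full Laver tree $L\in\mbb{Q}$, and $I(F``(L))$ is then in strong splitting normal form. Note that this route uses Assumption \ref{basicassumption} to bound the number of level-functions $h$ below some $\kappa_m$ -- an ingredient your argument never invokes, which is a further sign that regularity-pigeonhole plus fusion cannot suffice on their own. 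Any repair of your approach that postpones the choice of levels until the entire branch structure is in view essentially reconstructs Lemma \ref{onlysomanypieces}.
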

\begin{proof}
Fix $T \in \mbb{P}$.
Fix an embedding $F : N \to T$.
Let $\Psi : N \to \omega$ be the coloring
 $\Psi(u) := \dom( F(u) )$.
Let $L \in  \mbb{Q}$
 be given by Lemma \ref{lem.psi}.
Then $T' := I(F``(L))$ is in strong
 splitting normal form and $T' \le T.$
\end{proof}

This section concludes by showing  that $\mbb{P}$ is not
 $\kappa^{\cf(\kappa)}$-c.c.
That is,
 $\mbb{P}$ has a maximal antichain
 of size $\kappa^{\cf(\kappa)}$.
This result is optimal because
 $|\mbb{P}| = \kappa^{\cf(\kappa)}$.

\begin{proposition}\label{prop.2.31}
\label{splititup}
Let $T \in \mbb{P}$.
Then there are $\kappa^{\cf(\kappa)}$
 pairwise incompatible extensions of $T$ in $\mbb{P}$.
Hence, $\mbb{P}$ is not $\kappa^{\cf(\kappa)}$-c.c.
\end{proposition}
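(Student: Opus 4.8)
The plan is to construct a maximal antichain of the required size by exploiting the fact that $|X| = \kappa^{\cf(\kappa)}$ and that $\mbb{P}$ consists of perfect trees whose branch sets are perfect subsets of $X$. First I would fix $T \in \mbb{P}$ and use Observation~\ref{embeddingobs} to obtain an embedding $F : N \to T$, so that it suffices to find $\kappa^{\cf(\kappa)}$ pairwise incompatible perfect subtrees of $N$ itself: the images $I(F``(H))$ of pairwise ``branch-disjoint'' perfect trees $H$ are then pairwise incompatible perfect subtrees of $T$, by the last clause of that observation. (Two perfect trees $H_1, H_2$ with $[H_1] \cap [H_2] = \emptyset$ are incompatible in $\mbb{P}$ because any common extension $H_3 \le H_1, H_2$ would be a non-empty perfect tree with $[H_3] \subseteq [H_1] \cap [H_2] = \emptyset$, impossible since $[H_3] \ne \emptyset$.)

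Next I would split $X$ itself into $\kappa^{\cf(\kappa)}$ perfect pieces. The cleanest way: enlarge the splitting. Build a perfect tree $S \subseteq N$ in which every node splits into \emph{two} children but along each branch there are cofinally many levels where one of the two successors is chosen from a set of size $\kappa_\alpha$ --- actually it is simpler to note the following. For each $f \in X$ we want a distinct perfect tree $T_f$ with $[T_{f}] \cap [T_{g}] = \emptyset$ for $f \ne g$. Fix a bijection between $X$ and $\kappa^{\cf(\kappa)}$; it is enough to produce, inside $N$, a perfect tree $P^*$ together with a partition $[P^*] = \bigsqcup_{f \in X}[P_f]$ into $\kappa^{\cf(\kappa)}$ nonempty perfect sets. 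To do this, work below the stem of $N$ at level $0$: since $\kappa_0 > \cf(\kappa)$ and we may assume (reindexing) $\kappa_0 \ge \aleph_1$, partition the $\kappa_0$ children of $\emptyset$ in $N$ into two infinite blocks; recursively, on each branch, designate alternate splitting levels as ``coding'' levels where we record one bit (which block the next step uses) and ``growing'' levels where we keep full width $\kappa_\alpha$ splitting. The growing levels guarantee each resulting subtree is perfect; the coding levels, taken together along a branch, yield an element of ${}^{\cf(\kappa)}2$, and one checks there are $2^{\cf(\kappa)} \le \kappa^{\cf(\kappa)}$ such codes --- but we need $\kappa^{\cf(\kappa)}$ many, so instead at each coding level record an ordinal $< \kappa_\alpha$ (choosing the child from a fixed copy of $\kappa_\alpha$ inside $\mbox{Succ}(t)$), which produces a full branch of $X$ as the code; distinct codes give disjoint branch sets.

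Concretely: define $P^* = N$ and for each $f \in X$ let $P_f$ be the perfect tree consisting of those $t \in N$ that are ``consistent with coding $f$'' --- meaning that at each level $\alpha$, if $\dom(t) > \alpha$ then $t(\alpha) = f(\alpha)$ when $\alpha$ is even, and $t(\alpha)$ is unrestricted (ranging over all of $\kappa_\alpha$) when $\alpha$ is odd (for $\cf(\kappa) = \omega$; for general $\cf(\kappa)$ use a partition of $\cf(\kappa)$ into two cofinal pieces, or just even/odd successor ordinals and fix limits by taking unions). Each $P_f$ is perfect: at the ``odd'' levels below any node there is a node with $\ge \kappa_\alpha$ successors, giving pre-perfectness, and it is suitable since $N$ is. If $f \ne g$, pick an even $\alpha$ with $f(\alpha) \ne g(\alpha)$; then $[P_f] \cap [P_g] = \emptyset$, since any $h$ in the intersection would satisfy $h(\alpha) = f(\alpha)$ and $h(\alpha) = g(\alpha)$. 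Since $|X| = \kappa^{\cf(\kappa)}$, this is the desired family, and transferring via $F$ completes the proof; $\mbb{P}$ not being $\kappa^{\cf(\kappa)}$-c.c.\ follows immediately, and this is optimal since $|\mbb{P}| = |[N]| \le 2^{|N|}$ --- wait, we should just cite the earlier remark that $|\mbb{P}| = \kappa^{\cf(\kappa)}$.

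The main obstacle is bookkeeping at limit levels in the $\cf(\kappa) > \omega$ case: one must ensure that after freezing coordinates on a cofinal set of coding levels and keeping full splitting on a cofinal set of growing levels, the resulting tree is still \emph{suitable} (no short branches) and still pre-perfect; this requires the partition of $\cf(\kappa)$ into two cofinal sets and a routine verification that unions of initial segments at limits stay in the tree. In the $\cf(\kappa) = \omega$ case there are no limit levels below $\omega$ to worry about and the argument is entirely straightforward.
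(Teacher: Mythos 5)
Your proposal is correct and is essentially the paper's own argument: fix the embedding $F : N \to T$ from Observation~\ref{embeddingobs}, build $\kappa^{\cf(\kappa)}$ pairwise branch-disjoint perfect subtrees of $N$ indexed by $f \in [N]$, and push them forward to pairwise incompatible extensions of $T$. The paper's coding is slightly cleaner --- it partitions each $\kappa_\alpha$ into $\kappa_\alpha$ pieces $R_{\alpha,\beta}$ of size $\kappa_\alpha$ and sets $H_f := \{t : t(\alpha) \in R_{\alpha, f(\alpha)}\}$, so every level both codes a coordinate of $f$ and retains full splitting, making $f \mapsto [H_f]$ injective outright --- whereas your even/odd scheme gives $P_f = P_g$ whenever $f$ and $g$ agree on the coding levels, so you should index by functions on those levels only (the family still has size $\prod_{\alpha \in E}\kappa_\alpha = \kappa^{\cf(\kappa)}$ for a cofinal $E$, so the conclusion stands).
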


\begin{proof}
Let $F : N \to T$ be an embedding
 guaranteed to exist by the observation above.
For each $\alpha < \cf(\kappa)$, let
 $\{ R_{n, \beta} : \beta < \kappa_\alpha \}$
 be a partition of $\kappa_\alpha$
 into $\kappa_\alpha$
 pieces of size $\kappa_\alpha$.
Given $f \in [N]$, let
 $H_f \subseteq N$ be the tree
 $$H_f := \{ t \in N :
 (\forall \alpha \in \dom(t))\,
 t(\alpha) \in R_{\alpha, f(\alpha)} \}.$$
Each $H_f$ is a non-empty perfect tree.
If $f_1 \not= f_2$, then
 $[H_{f_1}] \cap [H_{f_2}] = \emptyset.$
Using the notation of Proposition~\ref{embeddingobs},
 for each $f \in [N]$ let
 $$T_f := I(F``(H_f)).$$
Certainly each $[T_f]$ is a subset of $P$,
 because $T_f \subseteq T$.
By the Proposition~\ref{embeddingobs},
 each $T_f$ is a non-empty perfect tree,
 and $f_1 \not= f_2$ implies
 $[T_{f_1}] \cap [T_{f_2}] = \emptyset$,
 which in turn implies
 $T_{f_1}$ is incompatible with $T_{f_2}$.
Thus, the conditions $T_f \in \mbb{P}$ for $f \in [N]$
 are pairwise incompatible.
Since $[N] = X$ has size $\kappa^{\omega}$,
 there are $\kappa^{\omega}$
 of these conditions.
\end{proof}


\section{$(\om,\kappa_n)$ and $(\om,\infty,<\kappa)$-distributivity hold in $\mathbb{P}$}\label{sec.dlsholding}

This section concentrates on those  distributive laws which hold in the complete Boolean algebra $\bB$, when $\kappa$ has countable cofinality.
Theorem \ref{thm.3.5} was proved by Prikry in the late 1960's; the first proof in print appears in this paper.
Here, we reproduce the main ideas of his proof, modifying his original argument slightly,
in particular,
using Lemma \ref{onlysomanypieces}, to simplify the  presentation.
In Theorem \ref{thm.3.9}  we prove  that $\bP$ satisfies a  Sacks-type property.  This, in turn, implies that the $(\om,\infty,<\kappa)$-d.l.\ holds in $\bB$ (Corollary \ref{cor.3.10}).
The reader is reminded that for the entire paper, Convention 
\ref{conventionforpaper} and Assumption \ref{basicassumption} are assumed.

\subsection{$(\om,\kappa_n)$-Distributivity}

\begin{definition}\label{defn.3.1}
A \defemph{stable tree system}
 is a pair $(F_N, F_\mbb{P})$ of functions
 $F_N : N \to N$ and
 $F_\mbb{P} : N \to \mbb{P}$,
 where $F_N$ is an embedding,
 such that
\begin{itemize}
\item[1)]
 For each $t \in N$,
 $\mathrm{Stem}(F_\mbb{P}(t)) \sqsupseteq F_N(t)$;
\item[2)]
 If $t_1 \in N$ is a proper initial segment of
 $t_2 \in N$, then
 $F_\mbb{P}(t_1) \supseteq F_\mbb{P}(t_2)$,
 and $F_N(t_1)$ is a proper initial segment of
 $F_N(t_2)$;
\item[3)]
 $F_N$ maps each level of $N$
 to a subset of a level of $N$
 (levels are mapped to distinct levels).
\end{itemize}
If requirement 3) is dropped,
 $(F_N, F_\mbb{P})$ is called a 
 \defemph{weak stable tree system}.
\end{definition}

Note that 1) can be rewritten as follows:
 $[F_\mbb{P}(t)] \subseteq B_{F_N(t)}$
 for all $t \in N$.
Note from 3) that
 $I(F``(N))$ is in $\mbb{P}$.

\begin{lemma}
\label{weakisenough}
Assume $\cf(\kappa) = \omega$.
If $(F_N, F_\mbb{P})$ is a weak stable tree system,
 then there is a tree $T \le N$ in strong splitting
 normal form and an embedding
 $F: N \to T$ such that
 $(F_N \circ F, F_\mbb{P} \circ F)$
 is a stable tree system.
\end{lemma}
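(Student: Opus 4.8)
The plan is to reduce a weak stable tree system to a genuine one by ``cleaning up'' the level structure via a further embedding, exactly as in the proof of Proposition~\ref{cangetstrongnormalform}. Given a weak stable tree system $(F_N, F_\mbb{P})$, the only failure is requirement 3): the embedding $F_N$ need not send levels of $N$ into single levels of $N$. To fix this, I would consider the composite embedding $G := F_N$ (or more precisely track where it sends nodes) and apply the coloring trick: let $\Psi : N \to \omega$ be the coloring $\Psi(u) := \dom(F_N(u))$, which records the level of $N$ to which $u$ is sent. By Lemma~\ref{lem.psi} (applied with $\kappa_n = \kappa_0$, say, so that the constant colorings are available, or more directly using Corollary~\ref{cor.2.27}), there is an $L \in \mbb{Q}$ together with a sequence $\{l_k\}$ such that $\Psi$ is ``level-by-level constant'' on $L$ — i.e.\ all nodes of $L$ on a fixed level of $L$ get the same $\Psi$-value, and distinct levels of $L$ get distinct $\Psi$-values. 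Strictly speaking, Lemma~\ref{lem.psi} gives an $L$ on which $\Psi \restriction f$ is a fixed function $h$ for all branches $f$, which yields precisely this: two nodes of $L$ at the same level agree with a common branch up to that level, hence have the same $\Psi$-value.

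Now fix an embedding $F : N \to L$ in strong splitting normal form, guaranteed by combining Observation~\ref{embeddingobs} and Proposition~\ref{cangetstrongnormalform} (i.e.\ take $T' = I(F''(L))$ in strong splitting normal form, and let $F$ be the induced embedding $N \to T'$; since $T' \subseteq L$, we may also view $F$ as an embedding $N \to L$). Composing, I claim $(F_N \circ F, F_\mbb{P} \circ F)$ is a stable tree system. Requirements 1) and 2) are inherited directly from $(F_N, F_\mbb{P})$ being a weak stable tree system together with $F$ being an embedding: for 1), $\mbox{Stem}(F_\mbb{P}(F(t))) \sqsupseteq F_N(F(t))$ by 1) for the original system; for 2), if $t_1$ is a proper initial segment of $t_2$ then $F(t_1)$ is a proper initial segment of $F(t_2)$ (embedding property), so $F_\mbb{P}(F(t_1)) \supseteq F_\mbb{P}(F(t_2))$ and $F_N(F(t_1))$ is a proper initial segment of $F_N(F(t_2))$. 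The new content is requirement 3): since $F$ is an embedding from $N$ into $L$ arising from a perfect subtree in strong splitting normal form, $F$ maps each level of $N$ into a single level of $L$ (this is built into the construction of $T'$ in strong splitting normal form — the $n$-splitting nodes all sit on level $l_n$), and on each level of $L$ the coloring $\Psi$ is constant with distinct values on distinct levels, so $F_N \circ F$ maps each level of $N$ into a single level of $N$, with distinct levels going to distinct levels. That is exactly 3).

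The tree $T$ required by the statement is then $T := I((F_N \circ F)''(N))$, which lies in $\mbb{P}$ by the remark following Definition~\ref{defn.3.1} (note 3) ensures $I(F''(N)) \in \mbb{P}$), and we may further extend it to strong splitting normal form by Proposition~\ref{cangetstrongnormalform} if desired, adjusting $F$ accordingly; alternatively one checks directly that the construction already produces something in strong splitting normal form. The main obstacle is bookkeeping: one must be careful that the embedding $F$ produced for the level-cleaning step is compatible with $L$ being a Laver tree $\mbb{Q}$-condition (so that the strong splitting normal form argument of Proposition~\ref{cangetstrongnormalform} applies verbatim), and that applying Lemma~\ref{lem.psi} with a $\kappa_n$-valued rather than $\omega$-valued coloring is legitimate — here one should simply note $\Psi$ takes values in $\omega \subseteq \kappa_0$, or restate Lemma~\ref{lem.psi} for $\omega$-valued colorings (its proof goes through unchanged since $\omega^\omega < \kappa_0$ is false, so one actually needs $\kappa_m > \kappa_0^\omega$ and the coloring taking finitely-or-countably many values — the cleanest route is to observe $\dom(F_N(u)) < \omega$ always, pick any $n$ with $\kappa_n > \omega$, i.e.\ $n \geq 0$, and apply Corollary~\ref{cor.2.27} to the countable partition $\{C_h : h \in {}^\omega\omega\}$ after first noting only countably many $C_h$ are nonempty is false — rather one partitions $X$ into the closed sets $C_h$ and invokes Corollary~\ref{cor.2.27}'s proof idea that $N$ itself is not $n$-small). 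I expect this coloring-cardinality point to be the one subtlety worth spelling out; everything else is a routine verification that composition preserves the three clauses.
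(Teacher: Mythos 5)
Your proposal is correct and follows essentially the same route as the paper: color each node $u$ by $\Psi(u):=\dom(F_N(u))$, apply Lemma~\ref{lem.psi} (legitimately, since $\Psi$ takes values in $\omega\subseteq\kappa_0$) to get a Laver tree $L\in\mbb{Q}$ on which the coloring is constant on levels, and compose with a level-to-level embedding $F:N\to L$. The only superfluous part is the detour through Proposition~\ref{cangetstrongnormalform} (which in the paper is itself deduced from this machinery); a direct level-preserving embedding into $L$ already suffices.
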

\begin{proof}
Let $\Psi : N \to \omega$
 be the coloring
 $\Psi(u) := \dom(F_N(u))$.
Let $T \in \mbb{Q}$
 be given by Lemma~\ref{onlysomanypieces}.
Let $F : N \to T$ be an embedding
 that maps levels to levels.
The function $F$ is as desired.
\end{proof}

We point out that  Definition \ref{defn.3.1}  applies for $\kappa$ of any cofinality.
It can be shown that if
 $(F_N, F_\mbb{P})$ is a stable tree system
 and $\gamma < \cf(\kappa)$,
 then
 $$\bigcup \{ F_\mbb{P}(t) :
 t \in N(\gamma) \} \in \mbb{P}.$$
For our purposes, when $\cf(\kappa)=\om$, the following lemma will be useful.

\begin{lemma}
\label{intofunionisbig}
Assume $\cf(\kappa) = \omega$.
Let $(F_N, F_{\mbb{P}})$
 be a stable tree system.
Then $$T := \bigcap_{n < \omega}
 \bigcup \{ F_\mbb{P}(t) :
 t \in N(n) \}$$
 is in $\mbb{P}$.
Further, given any $S \le T$
 and $n \in \omega$,
 there is some $t \in N(n)$
 such that $S$ is compatible with
 $F_\mbb{P}(t)$.
\end{lemma}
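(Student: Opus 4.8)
I want to show two things: first, that $T := \bigcap_{n<\omega} \bigcup\{F_\mbb{P}(t) : t\in N(n)\}$ is a perfect tree (hence in $\mbb{P}$); second, the compatibility claim. For the first part, the natural approach is to work with $[T]$ topologically and use the characterization from Corollary~\ref{majorequiv}. Write $T_n := \bigcup\{F_\mbb{P}(t) : t\in N(n)\}$, so $T = \bigcap_n T_n$. By requirement 2) of the stable tree system, for $t_1$ a proper initial segment of $t_2$ we have $F_\mbb{P}(t_1)\supseteq F_\mbb{P}(t_2)$; combined with the embedding condition and requirement 1), this makes the $T_n$ a decreasing sequence, at least ``eventually'' in the relevant sense, so that the intersection is genuinely a tree and not merely a set of branches. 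Concretely, I would check: (a) each $T_n$ is a subtree of $N$; (b) $[T] = \bigcap_n [T_n]$, using that each $[F_\mbb{P}(t)]$ is strongly closed (Lemma~\ref{perfecttreeimpliesset}) and a König-type argument to pass from ``$f$ has all initial segments in $T$'' to ``$f\in[T_n]$ for all $n$''. Since $\cf(\kappa)=\omega$, strongly closed equals closed, so $[T]$ is closed, hence $= [T']$ for the unique non-stopping tree $T'$; and one checks $T' = T$ since $T$ is already a tree with no dead ends (every node of $T$ lies on some $F$-image branch).

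**Perfectness.** To see $T$ is perfect, I use Corollary~\ref{majorequiv}(3): it suffices that $[T]$ is strongly closed (done) and that every basic neighborhood $B_{f\restriction\alpha}$ of every $f\in[T]$ meets $[T]$ in $\ge\kappa$ points. Here is where the structure pays off: given $f\in[T]$, for each $n$ there is a unique $t_n\in N(n)$ with $f\in[F_\mbb{P}(t_n)]$, and $t_0\sqsubset t_1\sqsubset\cdots$; let $g = \bigcup_n t_n \in [N]$. I claim $[T]$ contains $[I(F_N``(H))]$-type copies built from $F_\mbb{P}(t_n)$ for any $t_n$, but more directly: since each $F_\mbb{P}(t)\in\mbb{P}$ is itself a perfect tree, and since distinct nodes $t,t'\in N(n)$ give incompatible conditions $F_\mbb{P}(t)\perp F_\mbb{P}(t')$ (by 1) and the embedding property of $F_N$), a branch of $T$ is pinned down by a branch $g$ of $N$ together with a branch of the perfect tree $\liminf_n F_\mbb{P}(t_n)$. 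One shows $\bigcap_n F_\mbb{P}(g\restriction n)$ is nonempty and in fact perfect below any of its nodes — this is a standard fusion-style fact: a decreasing $\omega$-sequence of perfect trees whose stems go to infinity has perfect intersection (using suitability/non-stopping plus the pre-perfect splitting demand at each level $\alpha$, which is inherited since cofinally many $F_\mbb{P}(t)$ have splitting $\ge\kappa_\alpha$). Combining, $[T]$ is perfect: given any node, use the branching in $[N]$ (each $t\in N(\alpha)$ has $\kappa_\alpha$ successors, giving $\kappa^\omega$ branches of $N$) to get $\kappa^{\cf(\kappa)}$ distinct branches of $T$ through it.

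**The compatibility claim.** Given $S\le T$ and $n\in\omega$, pick any node $u$ in $S$ above $\mbox{Stem}(S)$, say on a level $\ge$ enough that $u$ determines a node $t\in N(n)$: since $S\subseteq T\subseteq T_n = \bigcup\{F_\mbb{P}(t) : t\in N(n)\}$, every node of $S$ on a sufficiently high level lies in some $F_\mbb{P}(t)$ with $t\in N(n)$; fix such a node and the corresponding $t$. Then $S | u \subseteq F_\mbb{P}(t)$, and $S|u$ is a (nonempty) condition in $\mbb{P}$ — one should check $S|u$ is still perfect, which holds since $S$ is perfect and $u$ is taken above the stem — witnessing that $S$ is compatible with $F_\mbb{P}(t)$. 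The only subtlety is ensuring such a node $u$ exists at a level high enough to be ``inside'' the $n$-th layer; this follows because $T_n$'s nodes above level $\dom(\mbox{Stem}(F_\mbb{P}(t)))$ for the various $t\in N(n)$ cover a terminal segment of each branch of $T$, so any $S\le T$ contains such a node.

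**Main obstacle.** I expect the delicate point to be the perfectness of $T$ — specifically, verifying that a node of $T$ has $\ge\kappa$ (equivalently $\kappa^{\cf(\kappa)}$) branches of $[T]$ through it, i.e.\ that the intersection operation does not collapse the branching. This requires carefully exploiting requirement 1) ($\mbox{Stem}(F_\mbb{P}(t))\sqsupseteq F_N(t)$, so stems climb as we go deeper in $N$) together with the pre-perfect splitting of each $F_\mbb{P}(t)$ and the fact that $F_N$ is an embedding mapping the $\kappa_\alpha$-fold branching of $N$ at level $\alpha$ injectively into $T$; a fusion argument along an $\omega$-sequence of these constraints produces a perfect subtree of $T$ below any given node. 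The ``further'' compatibility assertion is comparatively routine once the tree structure of $T$ is understood.
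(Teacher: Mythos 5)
Your treatment of the ``further'' compatibility claim is essentially right and matches the paper's. The problem is in your perfectness argument, which rests on a false step. You assert that $\bigcap_{n<\omega} F_\mbb{P}(g\restriction n)$ is perfect below any of its nodes, citing as a ``standard fusion-style fact'' that a decreasing $\omega$-sequence of perfect trees whose stems go to infinity has perfect intersection. This is exactly backwards. If $P_0\supseteq P_1\supseteq\cdots$ are trees with $|\mathrm{Stem}(P_n)|\to\infty$, then every node of $\bigcap_n P_n$ is comparable with every $\mathrm{Stem}(P_n)$; the stems form an increasing chain whose union is a single branch $b$, so $\bigcap_n P_n\subseteq I(\{b\})$. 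No fusion is available because nothing has arranged for any splitting node to survive into all the $P_n$; the splitting of $F_\mbb{P}(g\restriction n)$ is \emph{not} inherited by the intersection. Here requirement 1) of a stable tree system gives $\mathrm{Stem}(F_\mbb{P}(g\restriction n))\sqsupseteq F_N(g\restriction n)$, and these lengths are unbounded, so in fact $\bigcap_n F_\mbb{P}(g\restriction n)=I(\{F_N(g\restriction n):n<\omega\})$ --- one branch, not a perfect tree. Hence your structural picture ``a branch of $T$ is a branch $g$ of $N$ together with a branch of the perfect tree $\liminf_n F_\mbb{P}(t_n)$'' is wrong: the second coordinate is degenerate, and \emph{all} of the branching of $T$ comes from $F_N$.

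Your conclusion is still reachable, since the $N$-branching you invoke at the end of that paragraph suffices on its own; but the clean route (and the paper's) is the identity
$$T=\bigcup_{f\in X}\;\bigcap_{n<\omega}F_\mbb{P}(f\restriction n)=I\bigl(F_N``(N)\bigr),$$
which holds because distinct $t_1,t_2\in N(n)$ give trees $F_\mbb{P}(t_1),F_\mbb{P}(t_2)$ with no common node extending $F_N(t_1)$ or $F_N(t_2)$, together with the computation of $\bigcap_n F_\mbb{P}(f\restriction n)$ above. Since $F_N$ is an embedding mapping levels to levels, $I(F_N``(N))$ is visibly a perfect tree, with no need for the topological characterization in Corollary~\ref{majorequiv}. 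The compatibility claim also falls out of this identity: every node of $T$ on the level $l$ containing $F_N``(N(n))$ equals $F_N(t)$ for a unique $t\in N(n)$, and $T|F_N(t)\subseteq F_\mbb{P}(t)$, so any $s\in S(l)$ witnesses $S|s\le F_\mbb{P}(t)$. You should replace the fusion claim with this identity, or at least delete it and derive the $\ge\kappa$ branching through each node solely from the $\kappa^\omega$ branches of $N$ carried over by $F_N$.
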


\begin{proof}
To prove the first claim,
 note that
 $$T :=
 \bigcap_{n < \omega}
 \bigcup \{ F_\mbb{P}(t) : t \in N(n) \} =
 \bigcup_{f \in X}
 \bigcap_{n < \omega}
 F_\mbb{P}(f \restriction n).$$
This is because if
 $t_1, t_2 \in N$ are incomparable,
 then $F_\mbb{P}(t_1) \cap
 F_\mbb{P}(t_2) = \emptyset$.
Now temporarily fix $f \in X$.
One can see that
 $$\bigcap_{n < \omega}
 F_\mbb{P}(f \restriction n) =
 I(\{F_N( f \restriction n ) :
 n < \omega\}).$$
Now
 $$\bigcup_{f \in X}
 \bigcap_{n < \omega}
 F_\mbb{P}(f \restriction n) =
 \bigcup_{f \in X}
 I(\{F_N( f \restriction n ) :
 n < \omega\}) =
 I( F_N``(N)).$$
Thus, $T = I( F_N``(N))$,
 so $T$ is in $\mbb{P}$.

To prove the second claim,
 fix $S \le T$ and $n \in \omega$.
The stems of the trees
 $F_\mbb{P}(t)$ for $t \in N(n)$
 are pairwise incompatible.
Also, the stems of the trees
 $F_\mbb{P}(t)$ for $t \in N(n)$
 are all in $N(l)$ for some fixed
 $l \in \omega$.
Let $s \in S(l)$ be arbitrary.
Then $s = \mathrm{Stem}(F_\mbb{P}(t))$
 for some fixed $t \in N(n)$,
 and so $(S | s) \le F_\mbb{P}(t)$,
 showing that $S$ is compatible
 with $F_\mbb{P}(t)$.
\end{proof}

\begin{lemma}
\label{separate}
Assume $\cf(\kappa) = \omega$,
 and let $n < \omega$.
Consider any
 $\{ T_{\beta} \in \mbb{P} :
 \beta < \kappa_n \}$.
Then there is some
 $l < \omega$,
 a set $S \subseteq \kappa_n$
 of size $\kappa_n$,
 and an injection
 $J : S \to N(l)$
 such that
 $$(\forall \beta \in S)\,
 J(\beta) \in T_\beta.$$
\end{lemma}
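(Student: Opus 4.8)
The plan is to exploit pre-perfectness of each $T_\beta$ to locate a level on which $T_\beta$ is wide, then use a pigeonhole argument to fix that level uniformly across a large set of indices, and finally extract an injective transversal by a straightforward greedy recursion.

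First, for each $\beta < \kappa_n$, since $T_\beta$ is perfect it is in particular pre-perfect, so applying the defining property of pre-perfect trees with the ordinal $n$ (legitimate since $n < \omega = \cf(\kappa)$) and the root node $\emptyset$ (which lies in $T_\beta$ because $T_\beta$ is non-empty and closed under initial segments) yields a node $t^\beta \in T_\beta$ with $|\mbox{Succ}_{T_\beta}(t^\beta)| \ge \kappa_n$. Put $l_\beta := \dom(t^\beta) + 1$; all of these successors lie on level $l_\beta$, so $|T_\beta(l_\beta)| \ge \kappa_n$, and hence also $|N(l_\beta)| \ge \kappa_n$.

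Next I would pigeonhole on the assignment $\beta \mapsto l_\beta$, which is a function from $\kappa_n$ into $\omega$. By Convention \ref{conventionforpaper}, $\kappa_n$ is a regular cardinal with $\kappa_n > \cf(\kappa) = \omega$, hence has uncountable cofinality and cannot be written as a countable union of subsets of size $< \kappa_n$; therefore there is a fixed $l < \omega$ for which $S := \{ \beta < \kappa_n : l_\beta = l \}$ has size $\kappa_n$. For every $\beta \in S$ we then have $|T_\beta(l)| \ge \kappa_n$ together with $T_\beta(l) \subseteq N(l)$.

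Finally, enumerate $S$ as $\langle \beta_\xi : \xi < \kappa_n \rangle$ and define $J(\beta_\xi) \in T_{\beta_\xi}(l)$ by recursion on $\xi$ so that $J(\beta_\xi)$ differs from each of the fewer than $\kappa_n$ values $J(\beta_\eta)$ with $\eta < \xi$; this is possible precisely because $|T_{\beta_\xi}(l)| \ge \kappa_n > |\xi|$. The resulting $J : S \to N(l)$ is an injection with $J(\beta) \in T_\beta(l) \subseteq T_\beta$ for all $\beta \in S$, which is exactly what is wanted. The only step that carries any real content is the pigeonhole step, and what it rests on is just $\cf(\kappa_n) > \omega$; this is where the hypothesis $\cf(\kappa) = \omega$, together with $\cf(\kappa) < \kappa_\alpha$ from the Convention, is used. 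No appeal to Assumption \ref{basicassumption} or to the Laver-style-tree machinery of the previous subsection is required.
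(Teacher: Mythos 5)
Your proof is correct and follows essentially the same route as the paper's: find a level on which each $T_\beta$ has $\ge\kappa_n$ nodes, pigeonhole on the level using the regularity and uncountability of $\kappa_n$, and greedily choose distinct nodes to form the injection. The only difference is that you explicitly justify the existence of a wide level via pre-perfectness, a step the paper leaves implicit.
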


\begin{proof}
For each $\beta < \kappa_n$,
 let $l_\beta < \omega$
 be such that $T_\beta$
 has $\ge \kappa_n$ nodes
 on level $l_\beta$.
Let $l < \omega$
 and $S \subseteq \kappa_n$
 be a 
 set of size $\kappa_n$
 such that $(\forall \beta \in S)\,
 l_\beta = l$;
these exist because $\kappa_n$
 is regular and $\omega < \kappa_n$.
Define the injection
 $J : S \to N(l)$
 by  mapping each element $\beta$ of
 $S$ to a node on level $l$
 of $T_\beta$ which is different from
 the nodes chosen so far.
Then $J$ satisfies the lemma.
\end{proof}


\begin{theorem}\label{thm.3.5}
Assume $\cf(\kappa) = \omega$.
Then  $\bP$ satisfies the $(\omega, \nu)$-\textnormal{d.l.}, for all $\nu<\kappa$.
\end{theorem}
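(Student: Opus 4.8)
The plan is to prove the formally stronger statement that $\bP$ satisfies the $(\om,\kappa_n)$-d.l.\ for every fixed $n<\om$; this suffices, because any $\nu<\kappa$ satisfies $\nu<\kappa_n$ for some $n$, and a function into $\nu$ is in particular a function into $\kappa_n$. So fix $n<\om$ and, using the reformulation of distributivity given in Section~\ref{sec.defs}, fix a condition $T\in\bP$ and a $\bP$-name $\dot f$ with $T\forces\dot f:\check\om\to\check\kappa_n$; the goal is to produce $T'\le T$ and $g:\om\to\kappa_n$ with $T'\forces\dot f=\check g$.

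The first step is a fusion along $N$. Recursively in $k<\om$ (there are no limit stages, since $\cf(\kappa)=\om$) I would build a weak stable tree system $(F_N,F_\bP)$ with $F_\bP(\emptyset)\le T$ such that for every $t\in N(k)$ the condition $F_\bP(t)$ decides $\dot f\restriction k$; write $\phi(t)\in{}^{k}\kappa_n$ for the decided value. At a successor step, for each $t\in N(k)$ one splits $F_\bP(t)$ into $\kappa_k$ pairwise incompatible sub-conditions (as in the proof of Proposition~\ref{splititup}, via an embedding $N\to F_\bP(t)$), refines each of them to a condition deciding $\dot f(k)$, and takes the stem of the result as the corresponding node $F_N(t^\frown j)$ (which automatically properly extends $F_N(t)$). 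The values $\phi(t)$ cohere with $\sqsubseteq$ because $F_\bP$ is $\sqsubseteq$-decreasing. Applying Lemma~\ref{weakisenough}, we may assume $(F_N,F_\bP)$ is an honest stable tree system, so by Lemma~\ref{intofunionisbig} the tree $T_1:=\bigcap_{k<\om}\bigcup\{F_\bP(t):t\in N(k)\}=I(F_N``(N))$ is a condition with $T_1\le T$, and for every $S\le T_1$ and every $k<\om$ there is $t\in N(k+1)$ with $S$ compatible with $F_\bP(t)$. Thus below $T_1$ the name $\dot f$ is being read continuously off the generic branch: setting $\phi_f:=\bigcup_k\phi(f\restriction k)$ for $f\in[N]$, the condition $T_1$ forces $\dot f=\phi_f$ for the appropriate $f$.

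The second step removes the dependence on the branch via Lemma~\ref{onlysomanypieces}. Define $\Psi:N\to\kappa_n$ by $\Psi(u):=\phi(u)(\dom(u)-1)$ when $\dom(u)\ge 1$ and $\Psi(\emptyset):=0$, so that $\phi_f(k)=\Psi(f\restriction(k+1))$ for all $f\in[N]$ and $k<\om$. By Lemma~\ref{onlysomanypieces} there are $h:\om\to\kappa_n$ and a tree $L\in\mbb{Q}_m$, for some $m$ with $\kappa_m>(\kappa_n)^\om$, such that $[L]\subseteq C_h$; since $L\in\mbb{Q}_m$ it is a perfect tree, and along every $f\in[L]$ the sequence $\phi_f$ equals the fixed function $g$ given by $g(k):=h(k+1)$. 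Put $T':=I(F_N``(L))$; this is a perfect tree by Observation~\ref{embeddingobs}, so $T'\in\bP$ and $T'\le T_1\le T$. To check $T'\forces\dot f=\check g$, fix $S\le T'$ and $k<\om$, choose $t\in N(k+1)$ with $S$ compatible with $F_\bP(t)$, and pick $r\le S$ with $r\le F_\bP(t)$, so that $r$ decides $\dot f(k)$ with value $\phi(t)(k)$. Since $\emptyset\ne[r]\subseteq[T']\cap[F_\bP(t)]\subseteq[T']\cap B_{F_N(t)}$ and, by Observation~\ref{embeddingobs}, every branch of $T'=I(F_N``(L))$ arises from a branch of $L$, we get $t\in L$, whence $\phi(t)(k)=\Psi(t)=h(k+1)=g(k)$ and $r\forces\dot f(k)=\check g(k)$. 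Hence $\{r:r\forces\dot f(k)=\check g(k)\}$ is dense below $T'$, so $T'\forces\dot f(k)=\check g(k)$; as $k$ was arbitrary, $T'\forces\dot f=\check g$.

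The main obstacle is setting up the fusion in the second paragraph correctly: one must verify that the recursively constructed pair $(F_N,F_\bP)$ really is a weak stable tree system — the conditions decreasing along $\sqsubseteq$, the stems pushed past the designated node images, enough pairwise incompatible refinements available at each node — so that Lemma~\ref{weakisenough} and then Lemma~\ref{intofunionisbig} genuinely apply and deliver a single condition $T_1$ forcing $\dot f$ to be read continuously off the branch. Granting that infrastructure, Lemma~\ref{onlysomanypieces}, whose statement is tailored precisely to this situation, does the essential work of collapsing the $(\kappa_n)^\om$-many a priori possible values of $\dot f$ down to one.
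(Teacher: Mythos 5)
Your proposal follows essentially the same route as the paper's proof: build a stable tree system whose conditions decide more and more of the function, apply Lemma~\ref{onlysomanypieces} to a colouring $\Psi : N \to \kappa_n$ to homogenize the decided values along a perfect subtree $L$, and extract the final condition via Lemma~\ref{intofunionisbig}. Working with a name $\dot f$ and conditions that \emph{decide} $\dot f(k)$, rather than with maximal antichains in $\mbb{B}$ as the paper does, actually streamlines the endgame: you get $T' \forces \dot f(k) = \check g(k)$ by a direct density argument, whereas the paper must separately argue that its condition $Q$ lies in the regular open set $a_{m,h(m)}$ via the cardinality bound $|[Q]\cap[R]| \le \kappa < \kappa^\om$ for incompatible $R$. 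Your verification that the relevant $t$ lies in $L$ (so that $\Psi(t)=h(k+1)$) is correct.

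The one step that does not work as written is the choice of the nodes $F_N(t^\frown j)$ in the fusion. If you split $F_\bP(t)$ into pairwise incompatible sub-conditions in the style of Proposition~\ref{splititup} and then take the stems of their refinements, those stems need not be pairwise incomparable, nor even distinct: incompatible perfect trees can share a long common initial part, and in the Proposition~\ref{splititup} construction all the pieces $T_f$ have the \emph{same} stem, so after refinement one stem may well be an initial segment of another. Then $F_N$ is not an embedding (clause $t_1 \perp t_2 \Leftrightarrow F_N(t_1) \perp F_N(t_2)$ fails), the parenthetical ``automatically properly extends $F_N(t)$'' fails, and Lemmas~\ref{weakisenough} and~\ref{intofunionisbig} no longer apply. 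This is precisely the problem Lemma~\ref{separate} is designed to solve --- it selects, for $\kappa_k$-many conditions, distinct nodes on one common level --- and the paper invokes it at exactly this point of the construction. Alternatively, take the $\kappa_k$ incompatible pieces to be $F_\bP(t)|c$ for the successors $c$ of a single node of $F_\bP(t)$ with $\ge \kappa_k$ children; then the stems of the refinements extend the pairwise incomparable $c$'s and all required properties follow. With that repair the rest of your argument goes through.
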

\begin{proof}
Let $\mbb{B}$ be the complete
 Boolean algebra associated with $\mbb{P}$.
We have a dense embedding of $\mbb{P}$
 into $\mbb{B}$, which maps each condition
 $P \in \mbb{P}$ to the set
 of all conditions $Q \le P$.
Each element of $\mbb{B}$ is a
 downwards closed subset of $\mbb{P}$.
We shall show that for each $n<\om$, the $(\om,\kappa_n)$-d.l.\ holds in $\bB$.

Let $n<\om$ be fixed.
For each $m < \omega$, let
 $\langle a_{m,\gamma} \in \mbb{B} :
 \gamma < \kappa_n \rangle$
 be a maximal antichain in $\mbb{B}$.
For each $m < \omega$, the set
 $\bigcup \{
 a_{m,\gamma} : \gamma < \kappa_n \}$
 is dense in $\mbb{P}$.
To show that the specified
 distributive law holds, fix a
 non-zero element
 $b \in \mbb{B}$.
We must find a function
 $h \in {^{\omega} \kappa_n}$
 such that
 $$b \wedge \bigwedge_{m < \omega}
 a_{m,h(m)} > \mathbf{0}.$$
It suffices to show that for some
 $Q \in b$, there is a function
 $h \in {^\omega \kappa_n}$
 such that
 $$(\forall m < \omega)\,
 Q \in a_{m, h(m)}.$$

Fix any $P \in b$.
First, we will construct a stable tree
 system $(F_N, F_\mbb{P})$ with the property that
 $$(\forall m < \omega)
 (\forall t \in N(m))
 (\exists \gamma < \kappa_n)\,
 F_\mbb{P}(t) \in a_{m,\gamma}.$$
By Lemma~\ref{weakisenough},
 it suffices to define a weak stable tree system
 with this property.
To define $(F_N, F_\mbb{P})$,
 first let $F_N(\emptyset)$ be $\emptyset$ and
 $F_\mbb{P}(\emptyset) \le P$ be a member of
 $a_{0,\gamma}$ for some $\gamma < \kappa_n$.
Suppose that $t \in N$ and
 both $F_N(t)$ and $F_\mbb{P}(t)$ have been defined.
Suppose $t$ is on level $m$ of $N$.
Note that $\mathrm{Succ}_N(t) =
 \{ t ^\frown \beta : \beta < \kappa_m \}$.
For each $\beta < \kappa_m$,
 let $P_{\langle t, \beta \rangle}$
 be an element of
 $a_{m+1,\gamma}$ for some $\gamma < \kappa_n$.
We may apply Lemma~\ref{separate} to get
 injections $\eta_t : \mathrm{Succ}_N(t) \to \kappa_m$
 and $J_t : \mathrm{Succ}_N(t) \to N(l_t)$ for some
 $l_t < \omega$
 such that
 $(\forall s \in \mathrm{Succ}_N(t))\,
 J_t(s) \in P_{\langle t, \eta_t(s) \rangle}$.
For each $s \in \mathrm{Succ}(t)$,
 define $F_N(s) := J_t(s)$
 and $F_\mbb{P}(s) :=
 P_{\langle t, \eta_t(s) \rangle} | F_N(s)$.
Note that each $F_\mbb{P}(s)$ is in
 $a_{m+1,\gamma}$ for some
 $\gamma < \kappa_\alpha$.
Also, since the nodes
 $F_N(s) \sqsupseteq F_N(t)$
 for $s \in \mathrm{Succ}(t)$
 are pairwise incompatible,
 each $F_N(s)$ must be a
 \textit{proper} extension of $F_N(t)$.
This completes the definition of
 $(F_N, F_\mbb{P})$.

Let $\Psi : N \to \kappa_n$
 be the function such that for each
 $m < \omega$ and
 $t \in N(m)$,
 $\Psi(t) = \gamma < \kappa_n$
 is the unique ordinal such that
 $F_\mbb{P}(t) \in a_{m,\gamma}$.
Using the notation and result in
 Lemma~\ref{onlysomanypieces},
 there is some
 $h \in {^\omega \kappa_n}$
 such that $C_h$ includes
 a non-empty perfect set.
Fix such an $h$, and let
 $H \le N$ be a perfect tree
 such that $[H] \subseteq C_h$.
We have
 $$(\forall m < \omega)
  (\forall t \in H(m))\,
  F_\mbb{P}(t) \in a_{m,h(m)}.$$
Let $Q \in \mbb{P}$ be the set
 $$Q := \bigcap_{m < \omega}
 \bigcup \{ F_\mbb{P}(t) : t \in
 H(m) \}.$$
It is immediate that $Q \subseteq P$,
 because $F_\mbb{P}(\emptyset) = P$.
By Lemma~\ref{intofunionisbig},
 $Q \in \mbb{P}$.
Thus, $Q \le P$.

Now fix an arbitrary
 $m < \omega$.
We will show that
 $Q \in a_{m, h(m)}$,
 and this will complete the proof.
It suffices to show that for every
 $\gamma \not= h(m)$ and every
 $R \in a_{m, \gamma}$, we have
 $|[Q] \cap [R]| < \kappa^{\omega}$, as 
this will imply 
 there is no
 non-empty perfect
 subset of their intersection.

Fix such $\gamma$ and $R$.
We have $Q \le
 \bigcup \{ F_\mbb{P}(t) :
 t \in H(m) \}$.
In fact,
 $$[Q] \le
 \bigcup \{ [F_\mbb{P}(t)] :
 t \in H(m) \}.$$
Hence,
 $$[Q] \cap [R] \subseteq
 \bigcup \{ [F_\mbb{P}(t)] \cap [R] :
 t \in H(m) \}.$$
However,
 fix some $F_\mbb{P}(t)$
 for $t \in H(m)$.
The conditions $R \in a_{m,\gamma}$
 and $F_\mbb{P}(t) \in a_{m,h(m)}$
 are incompatible, so the closed set
 $[F_\mbb{R}(t)] \cap [R]$ must have size
 $\le \kappa$
 by Corollary~\ref{bigclosedhasperfect}.
We now have that
 $[Q] \cap [R]$ is a subset of
 a size $< \kappa$ union of
 size $\le \kappa$ sets.
Thus, $|[Q] \cap [R]| \le \kappa
 < \kappa^\omega$,
implying that the $(\om,\kappa_n)$-d.l.\ holds in $\bB$.
\end{proof}

\begin{question}
For $\cf(\kappa) > \omega$
 and $\nu<\kappa$,
 does $\bP$ satisfy the $(\cf(\kappa), \nu)$-d.l.?
\end{question}

\subsection{$(\om,\infty,<\kappa)$-Distributivity}

The next theorem we will prove
 will generalize the fact that
 $\mbb{P}$ satisfies the
 $(\omega, \kappa, {<\kappa})$-d.l.\
 (assuming $\cf(\kappa) = \omega)$.
The proof does not work for the
 $\cf(\kappa) > \omega$ case.
We could get the proof to work as long as
 we modified the forcing so that  fusion holds for  sequences of length
 $\cf(\kappa)$.
However, all such modifications we have tried
 cause important earlier theorems
 in this paper to fail.

\begin{definition}\label{defn.4.2}
Assume $\cf(\kappa) = \omega$.
A \defemph{fusion sequence} is a sequence
 of conditions $\langle T_n \in \mbb{P}
 : n < \omega \rangle$
 such that $T_0 \ge T_1 \ge ...$ and
 there exists a sequence of sets
 $\langle S_n \subseteq T_n : n < \omega \rangle$
 such that for each $n < \omega$,
 each $t \in S_n$ has
 $\ge \kappa_n$ successors in $T_n$,
 which are in $T_m$ for every $m \ge n$,
 and each successor of $t$ in $T_n$
 has an extension in $S_{n+1}$.
\end{definition}

\begin{lemma}\label{lem.4.3}
Let $\langle T_n \in \mbb{P} : n < \omega \rangle$
 be a fusion sequence and define
 $T_\omega := \bigcap_{n \in \omega} T_n$.
Then $T_\omega \in \mbb{P}$ and
 $(\forall n < \omega)\, T_\omega \le T_n$.
\end{lemma}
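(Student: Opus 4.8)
The plan is to check the three clauses that make $T_\omega$ a perfect tree — non-emptiness, suitability (equivalently, since $\cf(\kappa)=\omega$, non-stopping), and pre-perfectness — after which $T_\omega\le T_n$ is immediate, since $\mbb{P}$ is ordered by inclusion and $T_\omega=\bigcap_{m<\omega}T_m\subseteq T_n$. That $T_\omega$ is a tree, i.e.\ closed under initial segments, is clear, being an intersection of trees.

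The engine of the argument is a single observation about how the witnessing sets $\langle S_n:n<\omega\rangle$ meet $T_\omega$: if $t\in S_n$, then $t\in T_\omega$ and $t$ has at least $\kappa_n$ successors lying in $T_\omega$. Indeed, $t\in S_n\subseteq T_n$, so $t\in T_m$ for every $m\le n$ by the decreasing chain; and by Definition~\ref{defn.4.2}, $t$ has $\ge\kappa_n$ successors in $T_n$ lying in $\bigcap_{m\ge n}T_m$, so each such successor $c$ lies in \emph{every} $T_m$, hence in $T_\omega$, whence $t\in T_\omega$ by downward closure. Feeding this back into the other clause of the definition yields the ``climbing step'': if $t\in S_n$ and $c\in T_\omega$ is a successor of $t$, then $c$ (being in $T_n$) has an extension $c'\in S_{n+1}$, this $c'$ lies in $T_\omega$ by the observation just made, and $c'$ splits into $\ge\kappa_{n+1}$ successors inside $T_\omega$.

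Given this, non-emptiness is immediate: any node of $S_0$ lies in $T_\omega$. For suitability and pre-perfectness one iterates the climbing step: starting from a node of $T_\omega$, thread it onto the ladder $S_0,S_1,S_2,\dots$, so that at stage $n$ one has $u_n\in S_n\cap T_\omega$ with $\ge\kappa_n$ successors in $T_\omega$, pick one such successor, and let the climbing step produce $u_{n+1}\in S_{n+1}\cap T_\omega$ above it. The union of the chain $u_0\sqsubseteq u_1\sqsubseteq\cdots$ is a branch of $T_\omega$ extending the original node, giving suitability; and $u_n$ witnesses the splitting required by pre-perfectness at level $n$.

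The step needing genuine care — and where the precise shape of Definition~\ref{defn.4.2} is used — is the opening move of the climb: showing that an \emph{arbitrary} node of $T_\omega$ can actually be placed on the $S_n$-ladder, equivalently that $T_\omega$ has no ``stray'' nodes sitting off the ladder which might produce dead ends or stop splitting. This is where one invokes the positioning of the sets $S_n$ relative to the stems of the $T_n$ (every node of $T_\omega$ lies in every $T_n$, hence is comparable to $\mbox{Stem}(T_n)$ and can be threaded through the successors to which the extension clause applies), so that the ladder genuinely exhausts $T_\omega$. Once that is in place, the remaining verifications are the routine bookkeeping familiar from Sacks-style fusion.
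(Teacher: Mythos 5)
Your skeleton is the standard fusion argument (the paper itself offers no more than ``this is a standard argument''), and your first two paragraphs are correct: if $t\in S_n$ then $t$ and $\ge\kappa_n$ of its successors land in $T_\omega$, and the climbing step produces a chain $u_0\sqsubseteq u_1\sqsubseteq\cdots$ through $\bigcup_n S_n$ whose union is an infinite branch of $T_\omega$, with $u_n$ witnessing the required $\kappa_n$-splitting. The gap is exactly at the step you flag as ``needing genuine care'': you assert rather than prove it, and the justification you gesture at does not work. Every node of a tree with a stem is comparable to that stem, so comparability with $\mbox{Stem}(T_n)$ carries no information; more to the point, Definition~\ref{defn.4.2} ties the sets $S_n$ neither to the stems nor to the rest of $T_n$. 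It constrains only the nodes of $S_n$ and their immediate successors, and says nothing about nodes of $T_n$ that are incomparable with every element of $S_{n+1},S_{n+2},\dots$. So ``the ladder genuinely exhausts $T_\omega$'' is precisely what is missing, and it does not follow from the definition as written.

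This is a substantive gap, not a presentational one: from the literal Definition~\ref{defn.4.2} the conclusion can fail. Fix $f\in X$ with $f\restriction 2$ off the ladder, and let $T_n:=L_n\cup\{s\in N: s \mbox{ is comparable to } f\restriction(n+2)\}$, where $\langle L_n\rangle$ is a genuine fusion sequence with ladder $\langle S_n\rangle$ avoiding $f\restriction 2$. Each $T_n$ is perfect, the sequence is decreasing, and the same $\langle S_n\rangle$ witnesses Definition~\ref{defn.4.2}; but the second summand contributes only the initial segments of $f$ to $T_\omega$, so every extension of $f\restriction 2$ in $T_\omega$ is non-splitting and $T_\omega$ is not pre-perfect. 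The repair is to use the additional properties that the paper's fusion constructions actually arrange (see the proof of Theorem~\ref{thm.3.9}): every node of $T_n$ is comparable to some node of $S_n$, and every node of $S_{n+1}$ properly extends a node of $S_n$, so the minimal level of $S_n$ tends to infinity. Granting these, take $u\in T_\omega$ and $n$ so large that all nodes of $S_n$ have length greater than $|u|$; since $u\in T_n$, it is comparable to some $v\in S_n$, hence is a proper initial segment of $v$, and $v\in T_\omega$ by your observation --- now your climb starts and the rest of your argument goes through. You should either add these clauses to the hypotheses or note that they hold for the fusion sequences to which the lemma is applied; without some such hypothesis the threading step is simply false.
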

\begin{proof}
This is a standard argument.
\end{proof}

The following
 theorem shows that $\mbb{P}$
 has a property very similar to the Sacks property.

\begin{theorem}\label{thm.3.9}
Assume $\cf(\kappa) = \omega$.
Let $\mu : \omega \to ({\kappa - \{0\}})$ be any
 non-decreasing function
 such that $\lim_{n \to \omega} \mu(n) = \kappa$.
Let $\lambda = \kappa^\omega$.
Let $T \in \mbb{P}$ and $\dot{g}$ be such that
 $T \forces \dot{g} : \omega \to \check{\lambda}$.
Then there is some $Q \le T$
 and a function $f$ with domain $\omega$ such that
 for each $n \in \omega$,
 $|f(n)| \le \mu(n)$ and
 $Q \forces \dot{g}(\check{n}) \in \check{f}(\check{n})$.
\end{theorem}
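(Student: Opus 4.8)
The plan is to prove this Sacks-like property by a fusion argument, building $Q$ as the limit of a fusion sequence $\langle T_n : n < \omega\rangle$ with associated splitting sets $\langle S_n : n < \omega\rangle$, where at stage $n$ we decide $\dot g(n)$ along each node of $S_n$ so that only $\le \mu(n)$ possible values remain. First I would set up the bookkeeping: having chosen $T_n$ and a finite (or small) set $S_n$ of ``marked'' nodes, each with $\ge \kappa_n$ successors in $T_n$, I process each marked node $s \in S_n$ together with each of its successors, pushing down below that successor to a condition forcing a definite value of $\dot g(n)$. The subtlety is that a node $s \in S_n$ has $\ge \kappa_n$ successors, and naively each successor's sub-condition might force a different value of $\dot g(n)$, giving $\kappa_n$ values rather than $\le \mu(n)$. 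So the real work is: given $\kappa_n$ many conditions, one below each successor of $s$, each deciding $\dot g(n)$, thin out to a sub-collection of size $\ge \kappa_n$ (so we retain a genuinely splitting node) on which the decided value is \emph{constant} — or rather, I only need that the set of values that occur on the retained successors has size $\le \mu(n)$, which is even easier since $\mu(n) \ge 1$; in fact I can demand it be constant on a set of $\kappa_n$ successors whenever $\kappa_n$ is regular and $> 1$, using a pigeonhole/regressive-function argument. Actually to keep $|f(n)| \le \mu(n)$ with $\mu$ possibly much smaller than $\kappa_n$, I will simply record one value per marked node $s \in S_n$ (constant across $\kappa_n$ of its successors), and ensure $|S_n| \le \mu(n)$ by controlling how $S_n$ grows; since $S_0 = \{\mathrm{Stem}\}$ and each node of $S_n$ contributes finitely many (or $\kappa_n$-many, but we only pick $\mu(n+1)$-many) successors to extend into $S_{n+1}$, and $\mu$ is nondecreasing to $\kappa$, we can arrange $|S_n| \le \mu(n)$ throughout. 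Then $f(n) := \{$ value decided along $s : s \in S_n \}$ has $|f(n)| \le |S_n| \le \mu(n)$.

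In more detail, the inductive step from $T_n, S_n$ to $T_{n+1}, S_{n+1}$ goes as follows. For each marked node $s \in S_n$: by perfectness of $T_n$ (and a preliminary thinning so that $s$ has exactly $\kappa_n$ successors, which is possible since $T_n$ is perfect), enumerate the successors of $s$ in $T_n$. For each successor $c$, the condition $T_n | c$ has a further extension $R_c \le T_n | c$ with $R_c \Vdash \dot g(n) = \check\gamma_c$ for some $\gamma_c < \lambda$. The map $c \mapsto \gamma_c$ has domain of size $\kappa_n$; we do not apply a pigeonhole on $\lambda$ directly (that could fail since $\lambda = \kappa^\omega \ge \kappa_n$), but we may instead fix the value of $\dot g(n)$ below $s$ as follows: work below a single condition $T_n | c_0$ for one fixed successor $c_0$, let that decided value be $\gamma_s := \gamma_{c_0}$, and simply \emph{keep} the successor $c_0$ and sufficiently many other successors whose sub-conditions are compatible with deciding $\dot g(n) = \gamma_s$. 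Hmm — but a cleaner route is: since we only need $\ge \kappa_n$ successors retained and we are free to choose which, and since $|{}^\omega\kappa| = \lambda$, use the regularity of $\kappa_n$ together with the fact that we may \emph{first} extend each $R_c$ to medium splitting normal form with stem on a fixed level $l_s$ (uniformly across $c$, thinning to $\kappa_n$ of the $c$'s — possible by the regularity argument in Lemma~\ref{separate}), then the $\kappa_n$ values $\gamma_c$ partition $\kappa_n$; if all are distinct we still have $\kappa_n$ distinct values, which is bad. The resolution: we do not need $f(n)$ to contain only one value per $s$; we need $|f(n)| \le \mu(n)$ \emph{in total}. So actually let me retain \emph{one} successor per $s \in S_n$ for which we've decided the value — no, that kills splitting.

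The correct fix, and the one I would carry out, is to observe that we can afford to put $\kappa_n$ successors of $s$ into the tree $T_{n+1}$ all forcing \emph{different} values of $\dot g(n)$ as long as at stage $n$ we have only committed to $\dot g(n)$ lying in a set of size $\le \mu(n)$ — but $\kappa_n > \mu(n)$ is possible, so this still fails. Therefore the genuinely necessary step is a homogeneity argument: given the coloring $c \mapsto \gamma_c$ on the $\kappa_n$ successors of $s$, since $\kappa_n$ is regular and we may also use Assumption~\ref{basicassumption}, either some color class has size $\kappa_n$ (pigeonhole when fewer than $\kappa_n$ colors are used) — then keep that class and record its single color — or else $\kappa_n$ colors are used, in which case I instead \emph{do not decide $\dot g(n)$ at node $s$ at level $n$ at all}, but rather delay: shift to working at a deeper level, i.e. this is where the hypothesis $\mu(n) \to \kappa$ matters. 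Concretely, I would choose the marked set $S_n$ to sit on a level $l_n$ chosen large enough that $\mu(l_n) \ge \kappa_{n}$... no. Let me state the honest plan: this is the main obstacle, and the paper's actual argument (which I would reconstruct) surely uses that we may decide $\dot g(n)$ along $\kappa_n$ successors of a node, getting $\kappa_n$ values, but then we only need $|f(n)| \le \mu(n)$ where $\mu(n) \to \kappa$ — so we are allowed to use $f(n)$ of size up to $\mu(n)$, and by choosing the fusion so that the $n$-th splitting level handles $\dot g(m)$ for an $m$ with $\mu(m) \ge \kappa_n$, i.e.\ by \emph{interleaving} a slowdown, each value $\dot g(m)$ ends up constrained to a set of size $\le \kappa_n \le \mu(m)$.

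So here is the clean final plan. Reindex: build a fusion sequence $\langle T_n, S_n\rangle$ where $S_n$ has all nodes with $\ge \kappa_n$ successors. At stage $n$, pick $m_n$ least with $\mu(m_n) \ge \kappa_n \cdot |S_n|$ (exists since $\mu \to \kappa$ and $\kappa_n \cdot |S_n| < \kappa$), and at this stage decide the value of $\dot g(m_n)$: for each $s \in S_n$ and each of its $\kappa_n$ successors $c$, extend $T_n | c$ to $R_c$ deciding $\dot g(m_n) = \check\gamma_c$; set $f(m_n) := \{\gamma_c : c \text{ a retained successor of some } s \in S_n\}$, which has size $\le \kappa_n \cdot |S_n| \le \mu(m_n)$; glue the $R_c$'s (after normalizing stems to a common level via Lemma~\ref{separate}) to form $T_{n+1}$ and define $S_{n+1}$ as minimal splitting nodes below the retained successors. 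Arrange that $\{m_n : n < \omega\} = \omega$ by a standard dovetailing, and for indices $k$ never hit, set $f(k)$ arbitrarily of size $1$ (harmless) — actually we must hit every $k$; dovetail so that $m_n$ ranges over all of $\omega$, interleaving ``decide the next undecided index'' with ``decide a fresh index'' as needed, always legal because we only ever need $\mu(m) \ge \kappa_n \cdot |S_n|$ which for fixed target $m$ eventually holds. Let $Q := \bigcap_n T_n$; by Lemma~\ref{lem.4.3}, $Q \in \mbb P$ and $Q \le T_0 = T$. For each $n$, every branch of $Q$ passes through some retained successor $c$ of some $s \in S_{j}$ (where $m_j = n$), and $Q | c \le R_c \Vdash \dot g(n) = \check\gamma_c \in \check f(n)$, so by genericity $Q \Vdash \dot g(\check n) \in \check f(\check n)$. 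The main obstacle, as flagged, is precisely this bookkeeping: ensuring simultaneously that (i) the slowdown $n \mapsto m_n$ is a bijection onto $\omega$, (ii) $|f(m_n)| \le \mu(m_n)$, and (iii) the gluing at each stage genuinely produces a perfect tree with the right splitting sets so that $\langle T_n, S_n\rangle$ is a legitimate fusion sequence in the sense of Definition~\ref{defn.4.2} — the perfectness being maintained because at every stage we keep $\kappa_n$ successors at marked nodes and $\kappa_n \to \kappa$.
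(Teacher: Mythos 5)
Your overall strategy (a fusion sequence, deciding $\dot g$ below each retained successor of each marked node, gluing, and concluding by predensity of the pieces below $Q$) is the right skeleton, but the specific mechanism you settle on to keep $|f(n)|\le\mu(n)$ does not work, and it fails at exactly the step you flagged as the main obstacle. You keep full $\kappa_n$-splitting at the marked nodes of stage $n$ and compensate by deciding $\dot g(m_n)$ for an index $m_n$ chosen with $\mu(m_n)\ge\kappa_n\cdot|S_n|$. The dovetailing you then need --- that $n\mapsto m_n$ hits every $m\in\omega$ --- is impossible: the quantity $\kappa_n\cdot|S_n|$ is non-decreasing in $n$ and is already $\ge\kappa_0$ at stage $0$, so for a \emph{fixed} target $m$ the requirement $\mu(m)\ge\kappa_n\cdot|S_n|$ eventually fails permanently, rather than ``eventually holds'' as you assert (you have the direction of the inequality backwards). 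Worse, if $\mu(m)<\kappa_0$ --- which the hypotheses allow, e.g.\ $\mu$ finite on an initial segment, the genuinely ``Sacks-like'' case --- the requirement holds at no stage whatsoever, so $\dot g(m)$ is never confined to a set of size $\le\mu(m)$. Even among indices with $\mu(m)\ge\kappa_0$, all the (possibly several) indices with $\mu(m)<\kappa_1$ compete for the single stage $0$.

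The missing idea is a variant of the option you considered and discarded (``retain one successor per $s$ --- no, that kills splitting''): retain neither one successor nor $\kappa_n$ of them, but exactly $\min\{\kappa_n,\mu(n)\}$ children at each marked node of stage $n$, and only then decide $\dot g(n)$ below each retained child. The number of decided values is then at most $|S_n|\cdot\min\{\kappa_n,\mu(n)\}\le\mu(n)$ by construction (with $|S_n|\le\mu(n)$ maintained inductively), no reindexing of the $\dot g(n)$'s is needed, and the limit tree is still perfect because $\min\{\kappa_n,\mu(n)\}\to\kappa$: one extracts an honest fusion sequence in the sense of Definition~\ref{defn.4.2} by passing to a suitable subsequence of stages, and this is where the hypothesis $\lim_{n\to\omega}\mu(n)=\kappa$ is actually used. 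In short, the throttling must be applied to the splitting of the tree itself, not to the order in which the values of $\dot g$ are decided.
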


\begin{proof}
We will define a decreasing (with respect to inclusion)
 sequence of trees
 $\langle T_n \in \mbb{P} : n \in \omega \rangle$
 such that some subsequence of this is a fusion sequence.
The condition $Q$ will be the intersection
 of the fusion sequence.
At the same time, we will define $f$.
For each $n \in \omega$ we will also define a set
 $S_n \subseteq T_n$ such that every child
 (in $T_n$) of every node in $S_n$
 will be in each tree $T_m$ for $m \ge n$.
Each node in $T_n$ will be comparable to some node in $S_n$.
Also, we will have $|S_n| \le \mu(n)$
 and each $t \in S_n$ will have
 $\le \mu(n)$ children in $T_n$.
Each element of $S_{n+1}$ will properly
 extend some element of $S_n$,
 and each element of $S_n$ will
 be properly extended by some element of $S_{n+1}$.

Let $S_0$ consist of a single node $t$ of $T$
 that has $\ge \kappa_0$ children.
Let $T' \subseteq T$ be a subtree such that
 $t$ is the stem of $T'$ and
 $t$ has exactly $\min\{\kappa_0, \mu(0)\}$ children.
For each $\gamma$ such that $t ^\frown \gamma \in T'$,
 let $U_{t ^\frown \gamma}$ be a subtree of
 $T | t ^\frown \gamma$ such that
 $U_{t ^\frown \gamma}$ decides the value of
 $\dot{g}(\check{0})$.
Let $T_0$ be the union of these $U_{t ^\frown \gamma}$ trees.
The condition $T_0$ allows for only $\le \mu(0)$
 possible values for $\dot{g}(\check{0})$.
Define $f(0)$ to be the set of these values.
We have $T_0 \forces \dot{g}(0) \in \check{f}(0)$.
Also, $|S_0| = 1$ and the unique node in $S_0$
 has $\le \mu(0)$ children in $T_0$,
 so $|f(0)| \le \mu(0)$.

Now fix $n > 0$ and suppose we have defined
 $T_0, ..., T_{n-1}$.
For each child $t \in T_{n-1}$
 of a node in $S_{n-1}$,
 pick an extension $s_t \in T_{n-1}$ of $t$
 that has $\ge \kappa_n$ children in $T_{n-1}$.
Let $S_n$ be the set of these $s_t$ nodes.
By hypothesis,
 $|S_{n-1}| \le \mu(n-1)$ and
 each node in $S_{n-1}$ has $\le \mu(n-1)$
 children in $T_{n-1}$,
Thus, $|S_n| \le \mu(n-1)$,
 and so $|S_n| \le \mu(n)$,
 because $\mu(n-1) \le \mu(n)$.
Let $T'_{n-1}$ be a subtree of $T_{n-1}$ such that
 each $s_t$ is in $T'_{n-1}$ and each $s_t$
 has exactly $\min \{ \kappa_n, \mu(n) \}$
 children in $T'_{n-1}$.
Thus, each $s_t \in S_n$ has
 $\le \mu(n)$ children in $T'_{n-1}$.
For each $s_t ^\frown \gamma$ in $T'_{n-1}$,
 let $U_{s_t ^\frown \gamma}$ be a subtree of
 $T'_{n-1} | s_t ^ \frown \gamma$
 that decides the value of
 $\dot{g}(\check{n})$.
Let $T_n$ be the union of the
 $U_{s_t ^\frown \gamma}$ trees.
We have $T_n \subseteq T'_{n-1} \subseteq T_{n-1}$.
The condition $T_n$ allows for only
 $\mu(n)$ possible values for $\dot{g}(\check{n})$.
Define $f(n)$ to be the set of these values.
We have that $|f(n)| \le \mu(n)$ and
 $T_n \forces \dot{g}(0) \in \check{f}(0)$.

This completes the construction of the
 sequence of trees and the function $f$.
Defining $Q := \bigcap_{n \in \omega} T_n$,
 we see that $Q$ is a condition because there
 is a subsequence of $\langle T_n : n \in \omega \rangle$
 that is a fusion sequence
 satisfying the hypothesis of the lemma above.
This is true because
 $\lim_{n \to \omega} \mu(n) = \kappa$.
The condition $Q$ forces the desired statements.
\end{proof}

Note that for the purpose of using the theorem above,
 each function $\mu' : \omega \to \kappa$
 such that $\lim_{n \to \omega} \mu'(n) = \kappa$
 everywhere dominates a non-decreasing function
 $\mu: \omega \to \kappa$ such that
 $\lim_{n \to \omega} \mu(n) = \kappa$.
Note also that
 nothing would have changed in the proof if instead we had
 $T \forces \dot{g} : \omega \to \check{V}$,
 because any name for an element of $V$
 can be represented by a function in $V$ from
 an antichain (which has size
 $\le \kappa^\omega$, by Proposition \ref{prop.2.31}) in $\mbb{P}$
 to $V$.

\begin{cor}\label{cor.3.10}
Assume $\cf(\kappa) = \omega$.
Then $\mbb{P}$ satisfies the
 $$(\omega, \infty, <\kappa)\mbox{-}\mathrm{d.l.}$$
\end{cor}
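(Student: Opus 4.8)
The plan is to derive Corollary~\ref{cor.3.10} directly from Theorem~\ref{thm.3.9} together with the characterization of $(\omega, \infty, <\kappa)$-distributivity given in Section~\ref{sec.defs}. Recall that, since $\mbb{P}$ has maximal antichains of size at most $\kappa^\omega$ (Proposition~\ref{prop.2.31}) and none larger, the symbol $\infty$ in the second argument is, for this forcing, just shorthand for $\kappa^\omega$. So proving the $(\omega, \infty, <\kappa)$-d.l.\ amounts to the following: whenever $T \in \mbb{P}$ and $\dot g$ are such that $T \forces \dot g : \check\omega \to \check\lambda$ with $\lambda = \kappa^\omega$, there is a condition $Q \le T$ and a ground-model function $h : \omega \to [\lambda]^{<\kappa}$ with $Q \forces (\forall n < \check\omega)\, \dot g(n) \in \check h(n)$.

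The key step is to choose the parameter function $\mu$ in Theorem~\ref{thm.3.9} appropriately. Fix any non-decreasing $\mu : \omega \to (\kappa \setminus \{0\})$ with $\lim_{n \to \omega} \mu(n) = \kappa$; such a function exists because $\kappa$ is singular of cofinality $\omega$, so we may take $\mu(n) := \kappa_n$ (using Convention~\ref{conventionforpaper}). Apply Theorem~\ref{thm.3.9} to $T$, $\dot g$, and this $\mu$ to obtain $Q \le T$ and a function $f$ with domain $\omega$ such that $|f(n)| \le \mu(n) < \kappa$ for each $n$ and $Q \forces \dot g(\check n) \in \check f(\check n)$ for all $n$. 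Then $f$ is exactly the required covering function: each $f(n)$ lies in $[\lambda]^{<\kappa}$ since $|f(n)| \le \kappa_n < \kappa$, and $Q$ forces the generic values of $\dot g$ to be captured pointwise by $f$. Since $T$ was arbitrary, the $(\omega, \infty, <\kappa)$-d.l.\ holds in $\mbb{P}$, hence in $\mbb{B}$.

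There is essentially no obstacle here — the corollary is a packaging of Theorem~\ref{thm.3.9} — but the one point that deserves a sentence of care is the identification of $\infty$ with $\kappa^\omega$ and the observation (already made after Theorem~\ref{thm.3.9}) that replacing a name $\dot g$ for a function into $V$ by a name for a function into $\check\lambda$ loses no generality, since every name for an element of $V$ is coded by a function from an antichain of size $\le \kappa^\omega$ into $V$. With that remark in place, the deduction is immediate, and I would write the proof in just a few lines: invoke Theorem~\ref{thm.3.9} with $\mu(n) = \kappa_n$, note $|f(n)| \le \kappa_n < \kappa$, and conclude.

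\begin{proof}
Recall that since, by Proposition~\ref{prop.2.31}, every maximal antichain of $\mbb{P}$ has size $\le \kappa^\omega$ and some has size exactly $\kappa^\omega$, writing $\infty$ in the second argument of the distributive law is, for this forcing, the same as writing $\lambda := \kappa^\omega$. So it suffices to show: whenever $T \in \mbb{P}$ and $\dot g$ satisfy $T \forces \dot g : \check\omega \to \check\lambda$, there exist $Q \le T$ and $h : \omega \to [\lambda]^{<\kappa}$ in $V$ with $Q \forces (\forall n < \check\omega)\, \dot g(n) \in \check h(n)$.

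Since $\kappa$ is singular with $\cf(\kappa) = \omega$, the function $\mu : \omega \to (\kappa \setminus \{0\})$ defined by $\mu(n) := \kappa_n$ is non-decreasing with $\lim_{n \to \omega} \mu(n) = \kappa$, using Convention~\ref{conventionforpaper}. Apply Theorem~\ref{thm.3.9} to $T$, $\dot g$, and this $\mu$: we obtain a condition $Q \le T$ and a function $f$ with domain $\omega$ such that $|f(n)| \le \mu(n) = \kappa_n < \kappa$ for every $n < \omega$ and $Q \forces \dot g(\check n) \in \check f(\check n)$ for every $n < \omega$.

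Then $f$ is the desired covering function: for each $n$ we have $f(n) \in [\lambda]^{<\kappa}$ since $|f(n)| \le \kappa_n < \kappa$, and $Q \forces (\forall n < \check\omega)\, \dot g(n) \in \check f(n)$. As $T \in \mbb{P}$ was arbitrary, $\mbb{P}$ (and hence $\mbb{B}$) satisfies the $(\omega, \infty, <\kappa)$-d.l.
\end{proof}
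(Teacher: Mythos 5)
Your proposal is correct and matches the paper's intent exactly: the corollary is stated without proof precisely because it is the immediate packaging of Theorem~\ref{thm.3.9} (take $\mu(n) = \kappa_n$, note $|f(n)| \le \kappa_n < \kappa$), together with the identification of $\infty$ with $\kappa^\omega$ via the bound on antichain sizes. Nothing is missing.
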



\section{Failures of  Distributive Laws}\label{sec.dlfails}

This section contains two of the three failures of distributive laws proved in this paper.
Here, we assume  Convention 
\ref{conventionforpaper} and Assumption \ref{basicassumption}, and do not place any restrictions on the cofinality of $\kappa$.
Theorems \ref{thm.4.1} and \ref{thm.4.10} were proved by Prikry in the late 1960's (previously unpublished)
for the case when $\cf(\kappa)=\om$, and here they are seen to easily generalize to $\kappa$ of any cofinality. 

\subsection{Failure of $(\cf(\kappa), \kappa, \kappa_n)$-Distributivity}

We point out that when
 $\cf(\kappa) = \omega$,
the $(\omega, \kappa, {<\kappa})$-d.l.\ holding in $\bP$ follows from the fact that $\bP$
 satisfies the
 $(\omega, \omega)$-d.l.
However, if we replace the third parameter
 ${< \kappa}$ with a fixed cardinal $\nu<\kappa$,
 the associated distributive law fails.
This is true in the $\cf(\kappa) > \omega$
 case as well.

\begin{theorem}\label{thm.4.1}
For each $\nu<\kappa$,
the $(\cf(\kappa), \kappa, \nu)$-\textnormal{d.l.}\
 fails for $\mbb{P}$.
\end{theorem}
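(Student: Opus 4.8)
The plan is to produce a generic function witnessing the failure, built directly from the generic branch. Let $G$ be $\mbb{P}$-generic. Since each perfect tree in $\mbb{P}$ is non-stopping and the conditions are closed under $T \mapsto T|t$, the intersection $\bigcap_{T \in G}[T]$ is a single element $g \in X = {}^{\cf(\kappa)}\kappa$ (this is the standard computation: given $T \in G$ and a level $\alpha$, by perfection some node of $T$ on a level above $\alpha$ has $\ge \kappa_\alpha \ge 2$ successors, and by genericity $G$ decides which one lies on the branch). So $g \restriction \alpha$ for $\alpha < \cf(\kappa)$ enumerates the stem-extensions determined by $G$, and $g$ is a new function from $\cf(\kappa)$ into $\kappa$ (new because, by Proposition~\ref{prop.2.31} and a splitting argument, no single condition forces $g$ to equal a ground-model function — any $T$ can be split into two incompatible perfect subtrees $T_0, T_1$ by partitioning the successors at some splitting node above the stem, and these force $g$ to disagree). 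Actually the newness will follow more cleanly from observing that any condition has two extensions forcing different values of $g(\alpha)$ for a suitable splitting level $\alpha$.

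The heart of the argument is to show that $g$ cannot be covered, in the ground model, by a function $h : \cf(\kappa) \to [\kappa]^{\le \nu}$ (i.e.\ with $g(\alpha) \in h(\alpha)$ for all $\alpha$). Suppose toward a contradiction that some $T \in \mbb{P}$ and some such ground-model $h$ satisfy $T \forces (\forall \alpha)\, \dot{g}(\alpha) \in \check{h}(\alpha)$. Pick a splitting node $t \in T$ above $\mbox{Stem}(T)$, say on level $\alpha$, with $|\mbox{Succ}_T(t)| \ge \kappa_n > \nu$ (possible since $T$ is pre-perfect, so above any node and any prescribed $\alpha'$ there is an extension with $\ge \kappa_{\alpha'}$ successors, and we may take $\alpha'$ with $\kappa_{\alpha'} > \nu$). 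The first coordinate of $\dot{g}$ past $t$ is exactly $t$ followed by a successor: for each successor $s = t{}^\frown\sigma \in \mbox{Succ}_T(t)$, the condition $T|s$ forces $\dot{g} \restriction (\alpha+1) = s$, hence forces $\dot{g}(\alpha) = \sigma$. Since $T|s \le T$, we must have $\sigma \in h(\alpha)$ for every such $\sigma$; but there are $\ge \kappa_n > \nu \ge |h(\alpha)|$ many such $\sigma$, a contradiction.

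Combining the two parts: $\dot{g}$ is forced by $1_{\mbb{P}}$ to be a function from $\cf(\kappa)$ into $\kappa$, and for every condition $T$ and every candidate ground-model cover $h : \cf(\kappa) \to [\kappa]^{\le\nu}$ there is $Q \le T$ with $Q \not\forces (\forall\alpha)\,\dot{g}(\alpha) \in \check{h}(\alpha)$ — indeed $Q$ forcing $\dot{g}(\alpha) \notin \check{h}(\alpha)$ for the chosen splitting level. By the equivalent formulation of the $(\cf(\kappa), \kappa, {<\nu^+})$-d.l.\ recorded in Section~\ref{sec.defs}, this is exactly the failure of the $(\cf(\kappa), \kappa, \nu)$-d.l.\ for $\mbb{P}$. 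I expect the only genuinely delicate point to be the bookkeeping around levels: ensuring that $\dot{g}$, as canonically read off the generic, really has its $\alpha$-th value determined by which successor of the level-$\alpha$ node is taken, and choosing the splitting level so that $\kappa_{\alpha'}$ (equivalently some $\kappa_n$) exceeds $\nu$ — both of which are immediate from Definitions~\ref{def.2.1} and \ref{defn.2.4} together with Convention~\ref{conventionforpaper}. No cofinality restriction is used, so the argument applies verbatim for $\cf(\kappa) > \omega$.
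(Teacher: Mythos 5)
Your argument is correct, but it takes a genuinely different route from the paper's. You work with the canonical name $\dot{g}$ for the generic branch and refute the covering formulation of the $(\cf(\kappa),\kappa,\nu)$-d.l.\ directly: if some $T$ forced $\dot{g}(\alpha)\in\check{h}(\alpha)$ for all $\alpha$, then at a splitting node $t\in T$ on a level $\alpha$ with more than $\nu$ successors, each $T|(t^\frown\sigma)$ forces $\dot{g}(\alpha)=\sigma$, so $h(\alpha)$ would need more than $\nu$ elements --- a purely local, one-level pigeonhole. The paper instead fixes the explicit maximal antichains $A_\beta=\{N|t : t\in N(\beta)\}$ (which decide $\dot{g}\restriction\beta$ rather than $\dot{g}(\beta)$), takes arbitrary subfamilies $S_\beta$ of size $\le\kappa_\alpha$, and argues globally: the tree $H$ of nodes appearing in some $S_\beta$ has every level of size $\le\kappa_\alpha$, hence at most $\kappa_\alpha^{\cf(\kappa)}<\kappa$ branches by Assumption~\ref{basicassumption}, hence contains no perfect subtree, so no condition lies below $\bigvee S_\beta$ for every $\beta$. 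Both arguments witness the failure with closely related antichain families and both work for arbitrary cofinality, but your key step avoids the path-counting and does not invoke Assumption~\ref{basicassumption} at all, which makes it somewhat more elementary; the paper's counting shows in addition that the intersection tree itself is everywhere small. Two small remarks: your digression on the newness of $g$ is unnecessary, since the covering argument already implies $g\notin\check{V}$ (a ground-model $g$ is covered by $h(\alpha)=\{g(\alpha)\}$); and the only facts you really need to record are that $T|s\in\mbb{P}$ for $s\in T$ and that the stems of conditions with long stems are dense, both of which are immediate from the definitions as you indicate.
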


\begin{proof}
It suffices to show that for each $\al<\cf(\kappa)$, the $(\cf(\kappa),\kappa,\kappa_{\al})$-d.l.\ fails in $\mathbb{P}$.
Note that a maximal antichain of $\mbb{P}$
 corresponds to a maximal antichain of the
 regular open completion of $\mbb{P}$, via mapping $P\in\bP$ to the regular open set $\{Q\in\bP:Q\le P\}$.
Let $\al<\cf(\kappa)$, and 
let $A_\beta := \{ (N | t) :
 t \in N(\beta) \}$
 for each $\beta < \cf(\kappa)$.
Each $A_\beta$ is a maximal antichain in $\mbb{P}$.
For each $\beta < \cf(\kappa)$,
 let $S_\beta \subseteq A_\beta$
 have size $\le \kappa_\alpha$.
Let $H \subseteq N$ be the set of $t$
 such that $N | t \in S_\beta$
 for some $\beta$.
Since each $S_\beta$ has size
 $\le \kappa_\alpha$,
 each level of $H$ has size
 $\le \kappa_\alpha$.
This implies that $H$ has at most
 $\kappa_\alpha^{\omega} < \kappa$ paths,
 and so $[H]$ cannot include a
 non-empty perfect subset.
By the definitions, we have
 $$H = \bigcap_{\beta < \cf(\kappa)}
 \bigcup S_\beta.$$
Since the left hand side of the equation
 above cannot include a perfect tree,
 neither can the right hand side.
Hence, the collection $A_{\beta}$, $\beta<\cf(\kappa)$, witnesses the failure of $(\cf(\kappa),\kappa,\kappa_{\al})$-distributivity in $\bP$.
\end{proof}


We point out that the previous theorem is stated in Theorem 4 (2) of \cite{Namba72}.
The  proof  there, though, is not obviously complete,  and for the sake of the literature and of full generality, the proof has been included here. 

\subsection{Failure of $(\mf{d}, \infty, <\kappa)$-Distributivity}

\begin{definition}
Given functions
 $f,g : \cf(\kappa) \to \cf(\kappa)$,
 we write $f \le^* g$
 and say $g$ \defemph{eventually dominates} $f$
 iff $$\{ \alpha < \cf(\kappa) :
 f(\alpha) > g(\alpha) \}$$
 is bounded below $\cf(\kappa)$.
Let $\mf{d}(\cf(\kappa))$ be the smallest size
 of a family of functions
 from $\cf(\kappa)$ to $\cf(\kappa)$
 such that each function from
 $\cf(\kappa)$ to $\cf(\kappa)$
 is eventually dominated by a member of this family.
\end{definition}

\begin{definition}
Let $\mc{D}$ be the collection of all
 functions $f$ from $\cf(\kappa)$ to $\cf(\kappa)$
 such that $f$ is
 non-decreasing and
 $$\lim_{\alpha \to \cf(\kappa)} f(\alpha) =
 \cf(\kappa).$$
We call a subset of $\mc{D}$ a
 \defemph{dominated-by} family
 iff given any function $g \in \mc{D}$,
 some function in the family is
 eventually dominated by $g$.
\end{definition}

The smallest size of a dominated-by family
 if $\mf{d}(\kappa)$.
We will prove the direction that for every
 dominating family, there is a dominated-by
 family of the same size.
The other direction is similar.
Let $\mc{F}$ be a dominating family.
Without loss of generality, each
 $f \in \mc{F}$ is strictly increasing.
Let $\mc{F}' := \{ f' : f \in \mc{F} \}$,
 where each $f'$ is a non-decreasing
 function that extends the partial function
 $\{ (y,x) : (x,y) \in f \}$.
Since $\mc{F}$ is a dominating family,
 it can be shown that $\mc{F}'$
 is a dominated-by family.

\begin{definition}
Given $f \in \mc{D}$,
 we say that a perfect tree
 $T \in \mbb{P}$
 \defemph{obeys} $f$ iff
 for each $\alpha < \cf(\kappa)$,
 the $\alpha$-th level of $T$
 has $\le \kappa_{f(\alpha)}$
 nodes in $T$.
\end{definition}

\begin{lemma}\label{lem.4.5}
Let $\lambda = \mf{d}(\cf(\kappa))$
 and $G =
 \{ g_\gamma \in \mc{D} : \gamma < \lambda \}$
 be a dominated-by family.
Then there is some $\delta < \cf(\kappa)$
 such that
 $$(\forall \alpha < \cf(\kappa))
   (\exists \gamma \in \lambda)\,
   g_\gamma(\alpha) \le \delta.$$
\end{lemma}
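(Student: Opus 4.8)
The plan is to argue by contradiction: suppose that for every $\delta < \cf(\kappa)$ there exists some $\alpha_\delta < \cf(\kappa)$ such that $g_\gamma(\alpha_\delta) > \delta$ for all $\gamma < \lambda$. From this I want to build a single function $g \in \mc{D}$ that eventually dominates \emph{none} of the $g_\gamma$, contradicting that $G$ is a dominated-by family.

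First I would use the negation of the conclusion to extract, for each $\delta < \cf(\kappa)$, a level $\alpha_\delta$ with $\min_{\gamma} g_\gamma(\alpha_\delta) > \delta$. The key point is that these levels $\alpha_\delta$ must be cofinal in $\cf(\kappa)$: if they were all bounded by some $\beta < \cf(\kappa)$, then taking $\delta$ larger than $\sup\{ g_\gamma(\alpha) : \alpha \le \beta \}$ for enough $\gamma$'s would be problematic — here I need a little care, since $\lambda = \mf{d}(\cf(\kappa))$ may be large, so instead I would observe directly that for any fixed $\beta$, the values $g_\gamma(\alpha)$ for $\alpha \le \beta$ are each $< \cf(\kappa)$, and I want a $\delta$ witnessing the negation at a level $> \beta$; the negation gives me $\alpha_\delta$ for \emph{every} $\delta$, and I can simply note that if all $\alpha_\delta \le \beta$ then the function $\delta \mapsto \alpha_\delta$ maps $\cf(\kappa)$ into the $\le\beta$-initial segment, but $\delta < g_{\gamma}(\alpha_\delta)$ forces $g_\gamma(\alpha_\delta) \to \cf(\kappa)$ along a fixed $\gamma$, contradicting that $g_\gamma \restriction (\beta+1)$ takes only boundedly many values. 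So cofinally many levels $\alpha$ have the property that $g_\gamma(\alpha)$ is large for all $\gamma$.

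Next, using regularity of $\cf(\kappa)$, I would pick an increasing cofinal sequence of such levels and interpolate to define $g \in \mc{D}$ (non-decreasing, tending to $\cf(\kappa)$) which, at each chosen level $\alpha$, is \emph{below} $g_\gamma(\alpha)$ for all $\gamma$ — this is possible because at those levels every $g_\gamma$ is already large, so I just keep $g$ growing slowly enough. Then no $g_\gamma$ is eventually dominated by $g$: at each of the cofinally many chosen levels $\alpha$ we have $g_\gamma(\alpha) > g(\alpha)$, so $\{\alpha : g_\gamma(\alpha) > g(\alpha)\}$ is unbounded in $\cf(\kappa)$, i.e.\ $g_\gamma \not\le^* g$. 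This contradicts the dominated-by property of $G$, completing the proof.

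\textbf{Main obstacle.} The delicate point is the interplay between the (possibly large) index set of size $\lambda = \mf{d}(\cf(\kappa))$ and the fact that we only get to negate the conclusion one $\delta$ at a time: I must be sure that the chosen levels $\alpha_\delta$ genuinely become cofinal and that at each such level \emph{all} $g_\gamma$ simultaneously exceed $\delta$, so that a single $g$ works against the whole family at once. Once cofinality of the good levels is established, the construction of $g$ is routine interpolation using regularity of $\cf(\kappa)$.
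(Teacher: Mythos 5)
Your proposal is correct and follows essentially the same route as the paper: negate the conclusion to obtain levels $\alpha_\delta$ at which every $g_\gamma$ exceeds $\delta$, show these levels are cofinal in $\cf(\kappa)$ by noting that a single non-decreasing $g_\gamma$ is bounded on any proper initial segment, and then turn the partial map $\alpha_\delta \mapsto \delta$ into a slowly growing function in $\mc{D}$ that eventually dominates no member of $G$. The paper's proof is exactly this argument, taking $\alpha_\delta$ least so that $\delta \mapsto \alpha_\delta$ is monotone and the inverse extends to a member of $\mc{D}$.
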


\begin{proof}
Assume there is no such
 $\delta < \cf(\kappa)$.
For each $\delta < \cf(\kappa)$,
 let $\alpha_\delta < \cf(\kappa)$
 be the least ordinal such that
$$
(\forall \gamma<\lambda)\ g_{\gamma}(\al_{\delta})>\delta.
$$
It must be that
 $\delta_1 < \delta_2$ implies
 $\alpha_{\delta_1} \le
  \alpha_{\delta_2}$.
Now, the limit
 $$\mu := \lim_{\delta \to \cf(\kappa)}
 \alpha_\delta$$
 cannot be less than $\cf(\kappa)$.
To see why, suppose $\mu < \cf(\kappa)$.
Consider $g_0$.
The function $g_0 \restriction (\mu+1)$
 must be bounded below $\cf(\kappa)$,
since $\cf(\kappa)$ is regular.
Let $\delta$ be such a bound.
Since $\alpha_\delta \le \mu$
 and $g$ is non-decreasing,
 we have
 $g_0(\alpha_\delta) \le g(\mu) \le \delta$,
 which contradicts the definition
 of $\alpha_\delta$.

We have now shown that
 $\mu = \cf(\kappa)$.
The partial function
 $\alpha_\delta \mapsto \delta$
 may not be well-defined.
To fix this problem, for each
 $\alpha$ which equals
 $\alpha_\delta$ for at least
 one value of $\delta$,
 pick the least such $\delta$.
Let $\Delta \subseteq \cf(\kappa)$
 be the cofinal set of
 such $\delta$ values picked.
This results in a well-defined
 partial function
 which is non-decreasing.
Let $f \in \mc{D}$ be an extension
 of this partial function.
Since $G$ is a dominated-by family,
 fix some $\gamma$ such that
 $f$ dominates $g_\gamma$.
Now, let $\delta \in \Delta$
 be such that
 $g_\gamma(\alpha_\delta)
 \le f(\alpha_\delta)$.
Since $f(\alpha_\delta) = \delta$,
 we get that
 $g_\gamma(\alpha_\delta) \le \delta$,
 which contradicts the definition of
 $\alpha_\delta$.
\end{proof}


\begin{theorem}\label{thm.4.10}
The $(\mf{d}(\cf(\kappa)),
 \infty,
 <\kappa)$\textnormal{-d.l.} fails for $\mbb{P}$.
\end{theorem}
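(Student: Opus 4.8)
The plan is to build, in any generic extension, a function $\dot g:\cf(\kappa)\to\kappa$ that cannot be covered by a ground-model function into $[\kappa]^{<\kappa}$. The natural candidate comes from the branch through the generic filter: if $G$ is $\mbb{P}$-generic, then $\bigcup\{\mathrm{Stem}(T):T\in G\}$ is a function $x\in X$, and $g:=x$ is a new element of $X$, i.e. $g:\cf(\kappa)\to\kappa$ with $g(\alpha)<\kappa_\alpha$. I would show that no $h\in V$ with $h:\cf(\kappa)\to[\kappa]^{<\kappa}$ can satisfy $1\forces(\forall\alpha)\,\dot g(\alpha)\in\check h(\alpha)$ -- but that alone is the ordinary $(\cf(\kappa),\kappa,<\kappa)$-d.l.\ question, which actually holds when $\cf(\kappa)=\omega$. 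So the real point must be that a \emph{single} ground-model covering function cannot work simultaneously for the $\mf d(\cf(\kappa))$-many coordinates of a function $\dot G:\mf d(\cf(\kappa))\to\infty$; we are allowed $\mf d(\cf(\kappa))$-many inputs, not just $\cf(\kappa)$-many, and we exploit that extra room via Lemma~\ref{lem.4.5}.

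Concretely, here is the construction I would carry out. For each $T\in\mbb P$, let $\mathrm{width}_T:\cf(\kappa)\to\cf(\kappa)$ record, at each level $\alpha$, the least index $\beta$ with $|T(\alpha)|\le\kappa_\beta$ (so $T$ obeys $\mathrm{width}_T$ up to monotonizing). Let $\lambda=\mf d(\cf(\kappa))$ and fix a dominated-by family $G=\{g_\gamma:\gamma<\lambda\}\subseteq\mc D$. Define a name $\dot F$ for a function with domain $\lambda$ as follows: $\dot F(\gamma)$ is to be the element of $X$ given by the generic branch, but \emph{read off only along the levels dictated by $g_\gamma$} -- more precisely, $\dot F(\gamma)$ codes the sequence $\langle x\restriction\alpha:\alpha\in\mathrm{ran}(g_\gamma)\rangle$ where $x$ is the generic branch. (Some care is needed to make these genuinely be elements of a fixed set; packaging each value as an element of $X$ or of $2^{<\kappa}$ suffices, and by the final remark after Theorem~\ref{thm.3.9} names for elements of $V$ cause no trouble, so using $\infty$ as the second parameter is legitimate.) Now suppose for contradiction that $q\le T$ and $h:\lambda\to[\cdot]^{<\kappa}$ in $V$ satisfy $q\forces(\forall\gamma<\check\lambda)\,\dot F(\gamma)\in\check h(\gamma)$. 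Since each $h(\gamma)$ has size $<\kappa$ and consists of initial segments of branches, for each $\gamma$ the set of levels appearing among elements of $h(\gamma)$, together with the splitting patterns they permit, is "thin"; shrinking $q$ using fusion (Lemma~\ref{lem.4.3}), I would arrange that at level $\alpha$ the tree $q$ has $\ge\kappa_{g_\gamma(\alpha)}$ distinct nodes for \emph{every} $\gamma$ with $g_\gamma(\alpha)\le\delta$, where $\delta$ is the ordinal provided by Lemma~\ref{lem.4.5}. Since some such $\gamma$ exists at every level $\alpha$, the branch $x$ through $q$ can be steered (by a density/genericity argument, or by directly diagonalizing inside the fusion) so that $x\restriction\alpha$ avoids the $<\kappa_{g_\gamma(\alpha)}\le\kappa_\delta$-many level-$\alpha$ nodes that occur in $h(\gamma)$, for that $\gamma$. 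Then $\dot F(\gamma)$, which only looks at levels in $\mathrm{ran}(g_\gamma)$, disagrees with every element of $h(\gamma)$ at the offending level -- contradicting $q\forces\dot F(\gamma)\in\check h(\gamma)$.

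The key structural inputs are: (i) Lemma~\ref{lem.4.5}, which converts "$\lambda$ is a dominating number" into the uniform bound $\delta<\cf(\kappa)$ ensuring that at \emph{every} level some $g_\gamma$ stays below $\delta$, so that only $\le\kappa_\delta<\kappa$ bad nodes need to be dodged at each level; (ii) the fact that a perfect tree $T$, being pre-perfect, has descendants with $\ge\kappa_\alpha$ successors for arbitrarily large prescribed $\alpha$, so we can always find branching exceeding $\kappa_\delta$ below any node; and (iii) the fusion lemma for $\cf(\kappa)=\omega$, or more care for uncountable cofinality (note the theorem is stated without a cofinality hypothesis, but the paper's remarks flag that the fusion-style arguments are delicate there -- I would first give the $\cf(\kappa)=\omega$ proof cleanly, then indicate that the counting estimates $\kappa_\delta<\kappa$ and the level-by-level avoidance go through verbatim whenever a branch can be constructed coordinate-by-coordinate avoiding $<\kappa$ forbidden values at each step, which needs only that $\kappa$ is a limit and $\mathrm{cf}(\kappa)$-many steps are taken).

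\textbf{Main obstacle.} The crux is item (iii) combined with making the diagonalization actually generic: I must produce a \emph{single} condition $q\le T$ (not just a generic branch) that forces the disagreement, and for uncountable cofinality there is no fusion at our disposal. For $\cf(\kappa)=\omega$ I expect the fusion construction from Theorem~\ref{thm.3.9} to adapt directly -- at stage $n$ we have already committed finitely many levels of the branch, we pick a level $\alpha_n$ where some $g_{\gamma_n}$ is $\le\delta$, thin out to $\le\kappa_\delta$ nodes there, avoid the finitely-determined bad set coming from $h(\gamma_n)$ (which is possible since $\kappa_\delta<\kappa_{\alpha_n}$ after passing to a large enough level, using pre-perfectness), and continue; the union is in $\mbb P$ by Lemma~\ref{lem.4.3}. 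The genuinely hard part, and the reason the theorem's generality to uncountable cofinality is nontrivial, is handling limit levels: one must ensure the partial branch built so far extends into the thinned tree and still meets every $g_\gamma$-constraint cofinally, which is exactly where a "Bernstein set phenomenon" (flagged earlier in the paper before Lemma~\ref{lem.psi}) could in principle intrude. I would therefore present the argument so that the only place cofinality enters is the passage from "for each level, avoid $<\kappa$ values" to "there is a branch avoiding all of them", and remark that this passage is immediate for $\omega$ and survives for uncountable $\cf(\kappa)$ precisely because $h(\gamma)$ has size $<\kappa$ and $\kappa$ is a limit of the $\kappa_\beta$'s.
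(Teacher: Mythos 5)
There is a genuine gap, and it sits at the very first step: the name $\dot F$ you construct is not an instance to which the $(\mf d(\cf(\kappa)),\infty,<\kappa)$-d.l.\ applies. The distributive law (see the definitions in Section~\ref{sec.defs}) quantifies over functions from $\lambda$ into ordinals, equivalently over names each of whose values is forced to lie in $\check V$; the remark after Theorem~\ref{thm.3.9} that you cite only licenses replacing ordinals by arbitrary \emph{ground-model} objects. But your $\dot F(\gamma)$, the sequence $\langle x\restriction\alpha:\alpha\in\mathrm{ran}(g_\gamma)\rangle$ for the generic branch $x$, determines $x$ (since $\mathrm{ran}(g_\gamma)$ is cofinal in $\cf(\kappa)$) and hence is forced \emph{not} to be in $\check V$. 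Consequently no condition forces $\dot F(\gamma)\in\check h(\gamma)$ for any ground-model set $h(\gamma)$ whatsoever, of any size; the "failure of covering" you derive is vacuous and witnesses nothing about distributivity. The subsequent fusion/steering argument inherits this problem and is in any case not needed.

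The repair is essentially the paper's proof, which replaces your name by a legitimate one: for each $\gamma$ take a \emph{maximal antichain} $\mc A_\gamma$ consisting of trees that obey $f_\gamma$ (i.e.\ have $\le\kappa_{f_\gamma(\alpha)}$ nodes on level $\alpha$); the associated name "the member of $\mc A_\gamma$ lying in $\dot G$" takes values in the ground-model set $\mc A_\gamma$. A covering of size $<\kappa$ is then a subset $\mc B_\gamma\in[\mc A_\gamma]^{<\kappa}$, and the key observation is that $T_\gamma:=\bigcup\mc B_\gamma$ still obeys $\max\{f_\gamma,\delta_\gamma\}$ for suitable $\delta_\gamma$ with $\kappa_{\delta_\gamma}\ge|\mc B_\gamma|$. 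Lemma~\ref{lem.4.5} (which you correctly identified as the combinatorial heart) then yields a single $\delta<\cf(\kappa)$ such that $\bigcap_\gamma T_\gamma$ has $\le\kappa_\delta$ nodes on cofinally many levels, hence contains no perfect tree, so the meet of the $\mc B_\gamma$'s is $\mathbf 0$. Note that this argument is purely combinatorial, uses no fusion and no choice of generic branch, and therefore works uniformly for all cofinalities of $\kappa$ — the difficulties with uncountable cofinality that occupy the last part of your proposal do not arise.
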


\begin{proof}
Let $\lambda = \mf{d}(\cf(\kappa))$.
Let $\{ f_\gamma \in \mc{D} : \gamma < \lambda \}$
 be a set which forms a
 dominated-by family.
For each $\gamma < \lambda$, let
 $\mc{A}_\gamma \subseteq \mbb{P}$
 be a maximal antichain in $\mbb{P}$
 with the property that for each
 $T \in \mc{A}_\gamma$,
 $T$ obeys $f_\gamma$.
Note that each $\mc{A}_\gamma$
 has size $\le \kappa^{\cf(\kappa)}
 = |\mbb{P}|$.

For each $\gamma < \lambda$,
 let $\mc{B}_\gamma \subseteq \mc{A}_\gamma$
 be some set of size strictly less than $\kappa$.
Let $u : \mbb{P} \to \mbb{B}$
 be the standard embedding of $\mbb{P}$
 into its completion.
We claim that
 $$\bigwedge_{\gamma < \lambda}
   \bigvee
  \{ u(T) : T \in \mc{B}_\gamma \} = 0,$$
 which will prove the theorem.
To prove this claim,
 for each $\gamma < \lambda$ let
 $$T_\gamma := \bigcup \mc{B}_\gamma.$$
The claim will be proved once we
 show that
 $\tilde{T} := \bigcap_{\gamma < \lambda}
 T_\gamma$
 does not include a perfect tree.
It suffices to find some
 $\delta < \cf(\kappa)$
 such that there is a cofinal set of
 levels of $\tilde{T}$ that each
 have $\le \kappa_\delta$ nodes.

Since $\cf(\kappa) < \lambda$
 are both regular cardinals,
 fix a set
 $K \subseteq \cf(\kappa)$
 of size $\cf(\kappa)$
 and some $\delta < \cf(\kappa)$
 such that
 $|\mc{B}_\gamma| \le \kappa_\delta$
 for each $\gamma \in K$.
Given $\gamma \in K$, define
 $g_\gamma \in D$ to be the function
 $$g_\gamma(\alpha) := \max \{
 f_\gamma(\alpha), \delta \}.$$
As
 $|\mc{B}_\gamma| \le
 \kappa_\delta$ and
 $(\forall T \in \mc{B}_\gamma)\,$
 $T$ obeys $f_\gamma$,
 it follows that
 $T_\gamma = \bigcup \mc{B}_\gamma$
 obeys $g_\gamma$.
Thus, by the definition of
 $\tilde{T}$, it suffices to find
 a cofinal set $L \subseteq \cf(\kappa)$
 and for each $l \in L$ an ordinal
 $\gamma_l \in K$ such that
 $g_{\gamma_l}(l) \le \delta$.
This, however, follows
 from Lemma \ref{lem.4.5}.
\end{proof}

For $\cf(\kappa)=\om$, assuming the Continuum Hypothesis and that $2^{\kappa}=\kappa^+$, 
 Theorem 4 (4) of \cite{Namba72}
states that for all $\om\le \lambda\le\kappa^+$, the $(\om_1,\lambda,<\lambda)$-d.l.\ fails in $\bP$.
Under these assumptions, that theorem of Namba implies Theorem \ref{thm.4.10}.
We have included our proof as it is simpler and the result is more general than that in \cite{Namba72}.


\section{$\mathcal{P}(\omega)/\Fin$ and $\mf{h}$}\label{sec.5}

In this section, we show that the Boolean algebra $\mathcal{P}(\omega)/\Fin$ completely embeds into $\bB$.
Similar reasoning shows that the forcing 
$\bP$ collapses the cardinal $\kappa^{\om}$ to the distributivity number $\mathfrak{h}$.
It will follow that the $(\mathfrak{h},2)$-distributive law fails in $\bB$;
hence assuming the Continuum Hypothesis, $\bB$ does not satisfy the $(\om_1,2)$-d.l.
Similar results were proved by  \Bukovsky\ and \Coplakova\  in Section 5 of \cite{Bukovsky/Coplakova90}.
They considered  perfect trees, where there is a 
 fixed family of countably many regular cardinals and for each cardinal $\kappa_n$ in the family, their perfect trees must have cofinally many levels where the branching has size $\kappa_n$; similarly for their family of Namba forcings.

Recall that the regular open completion of
 a poset is the collection of regular open subsets
 of the poset ordered by inclusion.
For simplicity, we will work with the
 poset $\mbb{P}'$ of conditions in $\mbb{P}$
 that are in strong splitting formal form.
$\mbb{P}'$ forms a dense subset of $\mbb{P}$,
 so $\mbb{P}'$ and $\mbb{P}$ have isomorphic
 regular open completions.
For this section,
 let $\mbb{B}'$ denote the regular open completion
 of $\mbb{P}'$
 (and $\mbb{B}$ is the regular open completion of $\mbb{P}$).
Recall the following definition:
\begin{definition}
\label{cedef}
Let $\mbb{S}$ and $\mbb{T}$ be complete Boolean algebras.
A function $i : \mbb{S} \to \mbb{T}$ is a
 \defemph{complete embedding}
 iff the following are satisfied:
\begin{itemize}
\item[1)] $(\forall s, s' \in \mbb{S}^+)\,
 s' \le s \Rightarrow i(s') \le i(s)$;
\item[2)] $(\forall s_1, s_2 \in \mbb{S}^+)\,
 s_1 \perp s_2 \Leftrightarrow i(s_1) \perp i(s_2)$;
\item[3)] $(\forall t \in \mbb{T}^+)
 (\exists s \in \mbb{S}^+)
 (\forall s' \in \mbb{S}^+)\,
 s' \le s \Rightarrow
 i(s') || t$.
\end{itemize}
\end{definition}

If $i : \mbb{S} \to \mbb{T}$
 is a complete embedding,
 then if $G$ is $\mbb{T}$-generic over $V$,
 then there is some $H \in V[G]$ that is
 $\mbb{S}$-generic over $V$.

\begin{definition}
Given $T \in \mbb{P}$,
 $\mathrm{Split}(T) \subseteq \omega$
 is the set of $l \in \omega$
 such that $T$ has
 a splitting node
 on level $l$.
\end{definition}

\begin{theorem}\label{thm.5.3}
There is a complete embedding of
 $\mathcal{P}(\omega)/\Fin$ into $\mbb{B}$.
\end{theorem}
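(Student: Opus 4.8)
The plan is to build a complete embedding $i : \mathcal{P}(\omega)/\Fin \to \mbb{B}'$ by sending an infinite set $A \subseteq \omega$ (more precisely its equivalence class) to the regular open set determined by the collection of trees $T \in \mbb{P}'$ such that $\mbox{Split}(T)$ is (almost) contained in $A$, or rather such that the levels on which $T$ splits are confined to $A$ except possibly for an initial segment. The heuristic is that for a condition $T$ in strong splitting normal form witnessed by $\{l_n : n \in \omega\}$, the set $\{l_n : n \in \omega\}$ is an infinite subset of $\omega$, and shrinking $T$ (by fusion/pruning to a perfect subtree that splits only on a subset of its original splitting levels) corresponds exactly to passing to an infinite subset; two conditions are compatible in $\mbb{P}'$ in a way that mirrors the two infinite sets having infinite intersection. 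So the natural map goes the other way: given $A \in [\omega]^\omega$, let $i(A)$ be the regular open set generated by $\{ T \in \mbb{P}' : \mbox{Split}(T) \subseteq^* A \}$, and one checks this descends to $\mathcal{P}(\omega)/\Fin$ because changing $A$ by a finite set only affects an initial segment of splitting levels, which can be absorbed by passing below a condition.

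First I would verify condition (1) of Definition \ref{cedef}: if $A' \subseteq A$ then any tree splitting within $A'$ splits within $A$, so $i(A') \le i(A)$; this is immediate. Next, condition (2), the preservation of incompatibility. The forward direction (disjointness of $A, B$ modulo finite forces incompatibility of $i(A), i(B)$) uses the fact that a perfect tree must split cofinally often (it is pre-perfect with unbounded branching), so a single tree cannot have its splitting levels almost inside both $A$ and $B$ when $A \cap B$ is finite — any common refinement would be a perfect tree splitting only finitely often, contradicting Definition \ref{defn.2.4}. The reverse direction, that $A \cap B$ infinite implies $i(A) \wedge i(B) \ne 0$, requires producing an actual perfect tree splitting only on levels in $A \cap B$; here one uses that $A \cap B$ is infinite together with Assumption \ref{basicassumption} (so that at the $n$-th designated splitting level we can afford $\kappa_n$ splitting) — essentially the same construction as in Proposition \ref{cangetstrongnormalform}, choosing the splitting levels to lie in $A \cap B$. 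Finally, condition (3), the density/completeness requirement: given any nonzero $b \in \mbb{B}'$, pick $T \in \mbb{P}'$ with $u(T) \le b$ in strong splitting normal form witnessed by $\{l_n : n \in \omega\}$, and set $A := \{l_n : n \in \omega\}$. Then any infinite $A' \subseteq A$ gives $i(A')$ compatible with $u(T)$ because we can prune $T$ to a perfect subtree splitting only on the levels indexed by $A'$ (again a fusion argument, using $\lim \kappa_n = \kappa$ to keep the result perfect), and this pruned tree lies below both $T$ and witnesses membership in $i(A')$; since the infinite subsets of a fixed infinite set are dense in $\mathcal{P}(\omega)/\Fin$ below $[A]$, this establishes (3).

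The main obstacle I anticipate is condition (2)'s reverse direction and condition (3), both of which hinge on the same technical point: producing, from an infinite target set of allowed splitting levels, a genuine \emph{perfect} tree (suitable, and with $\alpha$-th splitting of size $\ge \kappa_\alpha$) whose splitting is confined to those levels, while also living below a prescribed condition. This is a fusion construction in the style of Proposition \ref{cangetstrongnormalform} and Theorem \ref{thm.3.9}, and the delicate part is bookkeeping: when we want the $n$-th splitting to happen at a level in $A'$, we must first grow stems to reach such a level, and we must ensure that between designated splitting levels the tree does not accidentally split — which is arranged by pruning each node to a single successor off the splitting levels. One must also double-check that the resulting set $i(A)$ is genuinely regular open (it is, being generated as the interior of a downward-closed set, or one can define $i(A)$ directly as such an interior) and that $i$ is well-defined modulo $\Fin$: if $A \triangle B$ is finite, then below any $T \in i(A)$ we can pass to $T | t$ for a sufficiently long stem $t$ so that the remaining splitting levels agree, showing $i(A) = i(B)$ as regular open sets. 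None of these steps is conceptually hard given the fusion machinery already developed, but assembling them carefully is where the real work lies.
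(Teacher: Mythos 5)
Your proposal is correct and follows essentially the same route as the paper: the same map $A \mapsto \{T \in \mbb{P}' : \mbox{Split}(T) \subseteq^* A\}$ on the dense set of conditions in strong splitting normal form, the same verification that this set is regular open and well-defined modulo $\Fin$, and the same checks of the three clauses of Definition~\ref{cedef}, with the pruning-to-prescribed-splitting-levels step playing exactly the role the paper assigns to ``the nature of strong splitting normal form.'' Your extra attention to the reverse direction of clause (2) and to regular-openness fills in details the paper treats briefly, but introduces no new ideas.
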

\begin{proof}
It suffices to show there is a complete embedding
 of $P(\omega)/\Fin$ into $\mbb{B}'$.
For each $X \in [\omega]^\omega$, define
 $\mc{S}_X \in \mbb{B}'$ to be
 $\mc{S}_X := \{ T \in \mbb{P}' :
 \mathrm{Split}(T) \subseteq^* X \}$
Note that $X =^* X'$ implies
 $\mc{S}_X = \mc{S}_{X'}$.
Define $i : [\omega]^\omega \to \mbb{P}'$
 to be $i(X) := \mc{S}_X$.
This induces a map from
 $P(\omega)/\Fin$ to $\mbb{B}'$.
We will show this is a complete embedding.

First, we must establish that each
 $\mc{S}_X$ is indeed in $\mbb{B}'$.
Temporarily fix $X \in [\omega]^\omega$.
We must show that
 $\mc{S}_X \subseteq \mbb{P}'$
 is a regular open subset of $\mbb{P}'$.
First, it is clear that $\mc{S}_X$
 is closed downwards.
Second, consider any
 $T_1 \not\in \mc{S}_X$.
By definition,
 $|\mathrm{Split}(T_1) - X| = \omega$.
By the nature of strong
 splitting normal form,
 there is some $T_2 \le T_1$ in $\mbb{P}'$
 such that $\mathrm{Split}(T_2) = \mathrm{Split}(T_1) - X$.
We see that for each
 $T_3 \le T_2$ in $\mbb{P}'$,
 $T_3 \not\in \mc{S}_X$.
Thus, $\mc{S}_X$ is a regular open set.


We will now show that $i$
 induces a complete embedding.
To show 1) of Definition~\ref{cedef},
 suppose $Y \subseteq^* X$ are in $[\omega]^\omega$.
If $T \in \mc{S}_Y$,
 then $\mathrm{Split}(T) \subseteq^* Y$,
 so $\mathrm{Split}(T) \subseteq^* X$,
 which means $T \in \mc{S}_X$.
Thus, $\mc{S}_Y \subseteq \mc{S}_X$,
 so 1) is established.

To show 2) of the definition,
 suppose $X, Y \in [\omega]^\omega$ but
 $X \cap Y$ is finite.
Suppose, towards a contradiction,
 that there is some
 $T \in \mc{S}_X \cap \mc{S}_Y$.
Then $\mathrm{Split}(T) \subseteq^* X$
 and $\mathrm{Split}(T) \subseteq^* Y$, so 
 $\mathrm{Split}(T) \subseteq^* X \cap Y$,
 which is impossible because
 $\mathrm{Split}(T)$ is infinite.

To show 3) of the definition,
 fix $T_1 \in \mbb{P}$.
Let $X := \mathrm{Split}(T_1)$.
We will show that for each
 infinite $Y \subseteq^* X$,
 there is an extension of $T_1$
 in $\mc{S}_Y$.
Fix an infinite $Y \subseteq^* X$.
By the nature of strong splitting normal
 form, there is some $T_2 \le T_1$ such that
 $\mathrm{Split}(T_2) = Y \cap X$.
Thus, $T_2 \in \mc{S}_Y$.
This completes the proof.
\end{proof}

\begin{cor}
Forcing with $\mbb{P}$ adds
 a selective ultrafilter on $\omega$.
\end{cor}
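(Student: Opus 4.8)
The plan is to read this off from Theorem~\ref{thm.5.3} together with two standard facts: that a generic filter for $\mathcal{P}(\omega)/\Fin$ codes a selective ultrafilter over the ground model, and that $\mbb{P}$ adds no new reals, so that this ground-model ultrafilter remains a selective ultrafilter in the extension. (Recall that $\cf(\kappa)=\omega$ throughout this section, so Theorems~\ref{thm.3.5} and~\ref{thm.5.3} apply.)

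First I would fix, by Theorem~\ref{thm.5.3}, a complete embedding $i : \mathcal{P}(\omega)/\Fin \to \mbb{B}$, and let $G$ be $\mbb{P}$-generic over $V$ (equivalently, generic for $\mbb{B} \setminus \{\mathbf{0}\}$). By the standard fact about complete embeddings recalled above, there is a filter $H \in V[G]$ that is $\mathcal{P}(\omega)/\Fin$-generic over $V$, i.e.\ $H$ meets every dense subset of $(\mathcal{P}(\omega)/\Fin)^V$ that lies in $V$. Put $\mathcal{U} := \{ A \in \mathcal{P}(\omega)^V : [A] \in H \}$.

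Next I would check, by routine density arguments, that $\mathcal{U}$ is a nonprincipal selective ultrafilter in $V$. It is a filter since $H$ is; it is nonprincipal since $[\{n\}] = \mathbf{0} \notin H$ for each $n$; and it is an ultrafilter because for every $A \in \mathcal{P}(\omega)^V$ the set $\{ [B] : [B] \le [A] \text{ or } [B] \le [\omega \setminus A] \}$ is dense (any infinite $C$ meets $A$ or $\omega \setminus A$ in an infinite set). For selectivity, given any $f : \omega \to \omega$ in $V$, the set $D_f := \{ [B] : f \restriction B \text{ is constant or injective} \}$ is dense in $(\mathcal{P}(\omega)/\Fin)^V$: given infinite $C$, either some fiber $f^{-1}(n) \cap C$ is infinite, in which case we take $B := f^{-1}(n) \cap C$, or every fiber meets $C$ finitely, in which case $C$ meets infinitely many fibers and we thin $C$ to an infinite $B$ hitting each fiber at most once. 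Since $D_f \in V$, genericity of $H$ gives $B \in \mathcal{U}$ with $f \restriction B$ constant or injective.

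Finally I would transfer this to $V[G]$. By Theorem~\ref{thm.3.5}, $\mbb{P}$ satisfies the $(\omega,\omega)$-d.l., hence adds no new function $\omega \to \omega$; therefore $\mathcal{P}(\omega)^{V[G]} = \mathcal{P}(\omega)^V$ and $({}^\omega\omega)^{V[G]} = ({}^\omega\omega)^V$. Consequently $\mathcal{U}$ is still an ultrafilter on $\omega$ in $V[G]$, and its selectivity --- which is expressed purely in terms of functions $\omega \to \omega$, all of which already lie in $V$ --- persists. Thus $V[G]$ contains a selective ultrafilter on $\omega$. The one point that genuinely needs care is this last step: it is essential that $\mbb{P}$ adds no reals, for otherwise the ground-model ultrafilter $\mathcal{U}$ need not even generate an ultrafilter on $\mathcal{P}(\omega)^{V[G]}$; everything else is immediate from Theorem~\ref{thm.5.3} and the classical theory of $\mathcal{P}(\omega)/\Fin$.
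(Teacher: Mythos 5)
Your proof is correct and follows essentially the same route as the paper, whose entire argument is the one-line observation that forcing with $\mathcal{P}(\omega)/\Fin$ adds a selective ultrafilter, combined with Theorem~\ref{thm.5.3}. You simply unpack the standard density arguments and, usefully, make explicit the persistence step the paper leaves implicit: since $\mbb{P}$ adds no new reals by Theorem~\ref{thm.3.5}, the ultrafilter obtained in the intermediate model remains a selective ultrafilter in the full extension $V[G]$.
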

\begin{proof}
Forcing with $\mathcal{P}(\omega)/\Fin$
 adds a selective ultrafilter.
\end{proof}

\begin{definition}
The distributivity number,
 denoted $\mf{h}$,
 is the smallest ordinal $\lambda$
 such that the
 $(\lambda,\infty)$-d.l.\
 fails for $\mathcal{P}(\omega)/\Fin$.
\end{definition}

We have that
 $\omega_1 \le \mf{h} \le 2^\omega$.
The $(\mf{h},2)$-d.l.\ in fact
 fails for $\mathcal{P}(\omega)/\Fin$.
Thus, forcing with $\mbb{P}$ adds a new
 subset of $\mf{h}$.
It is also well-known
 (see \cite{BlassHB})
 that forcing with $\mathcal{P}(\omega)/\Fin$
 adds a surjection from $\mf{h}$ to $2^\omega$.
Thus, forcing with $\mbb{P}$ collapses
 $2^\omega$ to $\mf{h}$.
We will now see that many more cardinals
 get collapsed to $\mf{h}$.

\begin{definition}
A family $\mc{H} \subseteq [\omega]^\omega$
 is called \defemph{almost disjoint}
 iff the intersection of any two elements of $\mc{H}$ is finite.
A family $\mc{H}\subseteq [\omega]^\omega$
 is called \defemph{mad}
 (maximally almost disjoint)
 iff $\mc{H}$ is almost disjoint and there is no
 almost disjoint family $\mc{H}'$ such that
 $\mc{H} \subsetneq \mc{H}' \subseteq [\omega]^\omega$.
\end{definition}

\begin{definition}
A \defemph{base matrix tree}
 is a collection $\{ \mc{H}_\alpha : \alpha < \mf{h} \}$
 of mad families $\mc{H}_\alpha \subseteq [\omega]^\omega$
 such that $\bigcup_{\alpha < \mf{h}} \mc{H}_\alpha$
 is dense in $[\omega]^\omega$
 with respect to almost inclusion.
\end{definition}

Balcar, Pelant and Simon proved in \cite{Balcar/Pelant/Simon80} that a base matrix for $\mathcal{P}(\omega)/\Fin$ exists, assuming only ZFC.
The following lemma and theorem use ideas from the proof of Theorem 5.1 in \cite{Bukovsky/Coplakova90}, in which \Bukovsky\ and \Coplakova\ prove that their perfect tree forcings, described above, collapses $\kappa^+$ to $\mathfrak{h}$, assuming $2^{\kappa}=\kappa^+$.

\begin{lemma}
There exists a family
 $\{ \mc{A}_\alpha \subseteq \mbb{P} : \alpha < \mf{h} \}$
 of maximal antichains such that
 $\bigcup_{\alpha < \mf{h}} \mc{A}_\alpha$
 is dense in $\mbb{P}$.
\end{lemma}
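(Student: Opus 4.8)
The plan is to work with the dense suborder $\mbb{P}'$ of conditions in strong splitting normal form, since maximal antichains of $\mbb{P}'$ remain maximal in $\mbb{P}$ and a dense subset of $\mbb{P}'$ is dense in $\mbb{P}$; thus it suffices to produce the $\mc{A}_\al$ inside $\mbb{P}'$. Fix a base matrix tree $\{\mc{H}_\al : \al < \mf{h}\}$ for $\mathcal{P}(\om)/\Fin$; using the construction of Balcar, Pelant and Simon we may assume it is \emph{refining}, i.e.\ $\mc{H}_0 = \{\om\}$, every member of $\mc{H}_\beta$ is $\subseteq^*$ some member of $\mc{H}_\al$ whenever $\al < \beta$, and $\bigcup_\al \mc{H}_\al$ is dense in $([\om]^\om,\subseteq^*)$. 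The two tools are: the complete embedding $Y \mapsto \mc{S}_Y$ of $\mathcal{P}(\om)/\Fin$ into $\mbb{B}'$ from Theorem~\ref{thm.5.3}; and the \emph{thinning operation} --- given $a \in \mbb{P}'$ with $\mathrm{Split}(a) = \{\ell_n : n<\om\}$ listed increasingly and $B \in [\om]^\om$, let $a[B] \le a$ be obtained by deleting all splitting of $a$ at the levels $\ell_n$ with $n \notin B$ (following a fixed branch of $a$ through each such level, and keeping all successors of $a$ at the surviving splitting levels). Then $\mathrm{Split}(a[B]) = \{\ell_n : n\in B\}$, the map $B \mapsto a[B]$ is $\subseteq$-monotone, and, chosen coherently, a descending $\om$-chain $a[B_0] \ge a[B_1] \ge \cdots$ with $B_0 \supseteq^* B_1 \supseteq^* \cdots$ is a fusion sequence whose intersection refines $a[B_\infty]$ for any pseudo-intersection $B_\infty$.

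Next I would build the $\mc{A}_\al \subseteq \mbb{P}'$ by recursion on $\al < \mf{h}$ so that: (1) $\mc{A}_\al$ refines $\mc{A}_\beta$ for all $\beta<\al$; (2) every $a \in \mc{A}_\al$ has $\mathrm{Split}(a) \subseteq^* Y$ for a unique $Y = Y_\al(a) \in \mc{H}_\al$, and for each $Y \in \mc{H}_\al$ the join in $\mbb{B}'$ of $\{a \in \mc{A}_\al : Y_\al(a) = Y\}$ equals $\mc{S}_Y$; and (3) a coherence requirement making any descending chain through the $\mc{A}_\beta$'s a genuine fusion sequence below stage $\om$ and, above it, a step-by-step copy of a branch of the base matrix tree. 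At a successor stage, refine below each $a \in \mc{A}_\al$ by the maximal antichain $\{\,\mc{S}_Y \wedge u(a) : Y \in \mc{H}_{\al+1}\,\}$ of $\mbb{B}' \restriction u(a)$ (a maximal antichain because $\mc{H}_{\al+1}$ is a mad family and $Y\mapsto \mc{S}_Y$ is a complete embedding), realising it by conditions of the form $a[B]$; the ``tree part'' --- the choice of which levels split and which successors are kept --- is scheduled into the first $\om$ stages, which is possible since $\mbb{P}'$ supports $\om$-fusion. At a limit stage $\lambda < \mf{h}$, a descending chain $\langle a_\beta : \beta<\lambda\rangle$ through the earlier antichains yields a $\subseteq^*$-descending chain $\langle Y_\beta : \beta<\lambda\rangle$ in the base matrix tree, which has a pseudo-intersection $Y_\lambda \in \mc{H}_\lambda$ by the defining property of the base matrix tree; by the coherence requirement the $a_\beta$'s then have a common lower bound $a_\lambda \in \mbb{P}'$ with $\mathrm{Split}(a_\lambda) \subseteq^* Y_\lambda$, and $\mc{A}_\lambda$ is taken to be a maximal antichain of such lower bounds satisfying (2).

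For the density of $\bigcup_{\al < \mf{h}} \mc{A}_\al$: given $T \in \mbb{P}'$, if no member of the union lay below $T$ then, tracing $T$ through the refinements, $T$ would determine a full-length $\mf{h}$-branch of the base matrix tree lying $\subseteq^*$-below a thinning of $\mathrm{Split}(T)$ together with a cofinal (below $T$) amount of tree information; but the existence of such a branch contradicts the density of $\bigcup_\al \mc{H}_\al$ in $([\om]^\om,\subseteq^*)$ --- equivalently, the failure of the $(\mf{h},\infty)$-distributive law for $\mathcal{P}(\om)/\Fin$ --- so some $\mc{A}_\al$ has an element below $T$.

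I expect the main obstacle to be exactly this recursion's bookkeeping: one must \emph{simultaneously} (a) route the ``which member of $\mc{H}_\al$'' information along the base matrix tree, so that this component has length precisely $\mf{h}$ and limit stages inherit pseudo-intersections; (b) dispatch the positions of splitting levels and the horizontal choices of kept successors inside the first $\om$ stages, since $\mbb{P}'$ has only countable fusion and no common refinements of arbitrary $\om$-many maximal antichains; and (c) still be greedy enough that the union ends up dense. This is where all the special structure of $\mbb{P}'$ is used --- that $\mbb{P}'$ reappears below every condition, that $\mathcal{P}(\om)/\Fin$ completely embeds into $\mbb{B}'$, and that $\mbb{P}'$ admits $\om$-fusion --- and it is modelled on the argument for Theorem~5.1 of \cite{Bukovsky/Coplakova90}.
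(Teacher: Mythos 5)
There is a genuine gap, and it is exactly where you flag ``the main obstacle'': the limit stages of your recursion. Your construction requires that, for every limit $\lambda < \mf{h}$, a descending chain $\langle a_\beta : \beta < \lambda\rangle$ selected through the refining antichains $\mc{A}_\beta$ have a common lower bound $a_\lambda \in \mbb{P}'$. But each successor step genuinely thins the tree: to pass from $\mathrm{Split}(a_\beta) \subseteq^* Y_\beta$ to $\mathrm{Split}(a_{\beta+1}) \subseteq^* Y_{\beta+1}$ with $Y_{\beta+1} \subsetneq^* Y_\beta$ you must delete splitting at infinitely many levels, so tree information is added at \emph{every} stage and cannot all be ``dispatched into the first $\om$ stages.'' For $\lambda$ of uncountable cofinality, a strictly decreasing $\lambda$-chain of perfect trees whose splitting sets strictly $\subseteq^*$-decrease need not have a perfect lower bound (the intersection can fail to be pre-perfect even when the splitting sets have a pseudo-intersection), and $\mbb{P}'$ supports only $\om$-fusion. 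The ``coherence requirement'' that is supposed to rescue this is never specified, and no such requirement can work uniformly. A secondary problem: the pseudo-intersection $Y_\lambda \in \mc{H}_\lambda$ of the branch $\langle Y_\beta\rangle$ is not guaranteed by the definition of a base matrix tree (the $\mc{H}_\al$ are just mad families with dense union); even in the Balcar--Pelant--Simon construction, branches of limit length through the levels may die. Your closing density argument is also circular in this form: compatibility of $T$ with some member of each $\mc{A}_\al$ does not trace out a single branch of the base matrix.

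The underlying misstep is assuming the antichains must refine one another; the statement does not ask for this, and dropping it removes every difficulty. The intended argument is direct and parallel rather than recursive: for each infinite $A \subseteq \om$ let $\mbb{P}_A := \{T \in \mbb{P} : \mathrm{Split}(T) \subseteq A\}$ and fix an antichain $\mc{B}_A \subseteq \mbb{P}_A$ whose downward closure is dense in $\mbb{P}_A$; then set $\mc{A}_\al := \bigcup\{\mc{B}_A : A \in \mc{H}_\al\}$. For distinct $A_1, A_2 \in \mc{H}_\al$ the pieces are pairwise incompatible because a common extension would have infinite splitting set almost contained in the finite set $A_1 \cap A_2$; maximality of $\mc{A}_\al$ follows from madness of $\mc{H}_\al$ (thin any $T$ so its splitting set lands inside some $A \in \mc{H}_\al$); and density of $\bigcup_{\al<\mf{h}}\mc{A}_\al$ follows from density of $\bigcup_{\al<\mf{h}}\mc{H}_\al$ under $\subseteq^*$ (choose $A \subseteq^* \mathrm{Split}(T)$ in the base matrix, thin $T$ into $\mbb{P}_A$, and go below into $\mc{B}_A$). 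No fusion of uncountable length, no limit stages, and no complete-embedding machinery is needed.
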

\begin{proof}
Let $\{ \mc{H}_\alpha \subseteq [\omega]^\omega :
 \alpha < \mf{h} \}$
 be a base matrix tree.
For an infinite $A \subseteq \omega$,
 let $\mbb{P}_A :=
 \{ T \in \mbb{P} : \mathrm{Split}(T) \subseteq A \}$.
For an infinite $A \subseteq \omega$,
 we may easily construct an antichain
 $\mc{B}_A \subseteq \mbb{P}_A$
 whose downward closure is dense in
 $\mbb{P}_A$.
Now temporarily fix $\alpha < \mf{h}$.
For distinct $A_1, A_2 \in \mc{H}_\alpha$,
 the elements of $\mc{B}_{A_1}$ are incompatible
 with the elements of $\mc{B}_{A_2}$,
 because if $T_1 \in \mc{B}_{A_1}$ and
 $T_2 \in \mc{B}_{A_2}$, then
 $\mathrm{Split}(T_1) \subseteq^* A_1$ and
 $\mathrm{Split}(T_2) \subseteq^* A_2$,
 so $T_1$ and $T_2$ cannot have a common
 extension because $A_1 \cap A_2$ is finite.

For each $\alpha < \mf{h}$,
 define $\mc{A}_\alpha :=
 \bigcup \{ \mc{B}_A : A \in \mc{H}_\alpha \}$.
Temporarily fix $\alpha < \mf{h}$.
We will show that $\mc{A}_\alpha$ is maximal.
Consider any $T \in \mbb{P}$.
We will show that some extension of
 $T$ is compatible with an element of $\mc{A}_\alpha$.
Let $T' \le T$ be such that
 $\mathrm{Split}(T') \subseteq A$
 for some fixed $A \in \mc{H}_a$.
If there was no such $A$, then
 $\mathrm{Split}(T)$ would witness that $\mc{H}_\alpha$
 is not a mad family.
Hence, $T' \in \mbb{P}_A$.
Since the downward closure of $\mc{B}_A$
 is dense in $\mbb{P}_A$,
 we have that $T'$ (and hence $T$) is compatible with some
 element of $\mc{B}_A \subseteq \mc{A}_\alpha$.

We will now show that
 $\bigcup_{\alpha < \mf{h}} \mc{A}_\alpha$
 is dense in $\mbb{P}$.
Fix any $T \in \mbb{P}$.
Let $A \in \bigcup_{\alpha < \mf{h}} \mc{H}_\alpha$
 be such that $A \subseteq^* \mathrm{Split}(T)$.
Let $T' \le T$ be such that $\mathrm{Split}(T') \subseteq
 A \cap \mathrm{Split}(T)$,
 and let $S \in \mc{B}_A$ be such that $S \le T'$.
Then $S \le T$,
 and we are finished.
\end{proof}

\begin{theorem}
The forcing $\mbb{P}$ collapses
 $\kappa^\omega$ to $\mf{h}$.
\end{theorem}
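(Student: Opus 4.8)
The plan is to use the base matrix structure established in the previous lemma. We have a family $\{ \mc{A}_\alpha \subseteq \mbb{P} : \alpha < \mf{h} \}$ of maximal antichains whose union is dense in $\mbb{P}$. Each $\mc{A}_\alpha$ is of the form $\bigcup \{ \mc{B}_A : A \in \mc{H}_\alpha \}$, and crucially each $\mc{B}_A$ has size at most $2^\omega \le \kappa^\omega$, and there are at most $2^\omega$ many sets $A$ in each mad family $\mc{H}_\alpha$; moreover $|\mbb{P}| = \kappa^\omega$, so each $\mc{A}_\alpha$ has size $\le \kappa^\omega$. First I would argue that if $G$ is $\mbb{P}$-generic over $V$, then $G$ meets each $\mc{A}_\alpha$ in exactly one condition, say $T_\alpha$. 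This gives a function $\alpha \mapsto T_\alpha$ from $\mf{h}$ into $\mbb{P}$ lying in $V[G]$.

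Next I would show that the range of this function is cofinal in $G$ in the sense that $\{ T_\alpha : \alpha < \mf{h} \}$ is dense below every element of $G$; equivalently, for every $S \in G$ there is some $\alpha < \mf{h}$ with $T_\alpha \le S$. This is where density of $\bigcup_\alpha \mc{A}_\alpha$ in $\mbb{P}$ is used: given $S \in G$, by density there is some $R \in \bigcup_\alpha \mc{A}_\alpha$ with $R \le S$, and then a genericity/compatibility argument forces $R \in G$, hence $R = T_\alpha$ for the appropriate $\alpha$. Actually the cleaner route is: the set of conditions $R$ such that either $R \perp S$ or ($R \le S$ and $R \in \bigcup_\alpha \mc{A}_\alpha$) is dense, so $G$ meets it, and since $S \in G$ the met condition is compatible with $S$, forcing the second alternative. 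Therefore $G$ is the upward closure (in $\mbb{P}$, or rather the filter generated) of $\{ T_\alpha : \alpha < \mf{h} \}$, so $G$ is definable in $V[G]$ from the $\mf{h}$-sequence $\langle T_\alpha : \alpha < \mf{h} \rangle$ together with parameters in $V$.

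Now I would invoke that $\mbb{P}$ collapses $\kappa^\omega$ to something, and pin it down to $\mf{h}$. In $V[G]$, fix a bijection $e : \kappa^\omega \to \mbb{P}$ in $V$ (using $|\mbb{P}| = \kappa^\omega$ from Proposition~\ref{prop.2.31} and the surrounding remarks). The map $\alpha \mapsto e^{-1}(T_\alpha)$ is a function from $\mf{h}$ into $\kappa^\omega$ in $V[G]$; I claim its range is all of $\kappa^\omega$, equivalently $\{ T_\alpha : \alpha < \mf{h} \} = \mbb{P}$ — but that is false as stated, since $G$ is a proper filter. The correct claim is that every condition in $\mbb{P}$ is \emph{either} incompatible with some $T_\alpha$ \emph{or} extended by some $T_\alpha$, which does not surject onto $\mbb{P}$. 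So instead I would argue as follows: since $\mbb{P}$ collapses $2^\omega$ to $\mf{h}$ (noted just before the lemma, via $\mc{P}(\omega)/\Fin$ adding a surjection from $\mf{h}$ onto $2^\omega$, combined with Theorem~\ref{thm.5.3}), it suffices to show $(\kappa^\omega)^V$ has size $\le 2^\omega$ in $V[G]$, i.e. to inject $(\kappa^\omega)^V$ into $\mf{h}$ in $V[G]$. To each $a \in (\kappa^\omega)^V$ I would like to assign an ordinal $< \mf{h}$; the natural choice is to use that the antichains $\mc{A}_\alpha$ refine each other densely, so the sequence $\langle T_\alpha \rangle$ "decodes" arbitrary ground-model objects. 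Concretely, fix in $V$ for each $a$ a condition $R_a \in \mbb{P}$ (e.g. via the bijection $e$), and let $\Phi(a)$ be the least $\alpha$ such that $T_\alpha$ decides whether $R_a \in G$ in a way that records $a$ — more precisely, use that below $R_a$ the forcing below $\mbb{P}$ is isomorphic to $\mbb{P}$, re-run the base matrix construction relativized to $R_a$, and read off an ordinal.

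The main obstacle, and the step I would spend the most care on, is making the last paragraph precise: showing that the $\mf{h}$-sequence $\langle T_\alpha : \alpha < \mf{h}\rangle$ genuinely codes a surjection from $\mf{h}$ onto $(\kappa^\omega)^V$ in $V[G]$, not merely that $G$ is recovered from it. The right way, following \cite{Bukovsky/Coplakova90} Theorem 5.1, is probably this: for a fixed $T \in \mbb{P}$, the set of $\alpha$ with $T_\alpha \le T$ is nonempty (by the density/genericity argument above) and one takes its minimum, giving a map $\pi : \mbb{P} \to \mf{h}$ in $V[G]$ which is onto (because for each $\alpha$, every antichain $\mc{A}_\alpha$ is a maximal antichain so $T_\alpha$ occurs and $\pi(T_\alpha) \le \alpha$, and one checks surjectivity by a counting/unboundedness argument using that the $\mc{H}_\alpha$ form a genuine matrix of increasing refinement). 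Composing with a $V$-bijection $\kappa^\omega \to \mbb{P}$ yields a surjection $(\kappa^\omega)^V \to \mf{h}$ in $V[G]$; combined with the collapse of $2^\omega$ to $\mf{h}$ and $|\kappa^\omega| \ge 2^\omega$, one concludes $|(\kappa^\omega)^V| = \mf{h}^{V[G]}$, i.e. $\mbb{P}$ collapses $\kappa^\omega$ to $\mf{h}$. I would double-check along the way that $\mf{h}$ is not itself collapsed (it is computed correctly in $V[G]$), which follows since $\mbb{P}$ satisfies the $(\omega,\omega)$-d.l.\ by Theorem~\ref{thm.3.5} and so adds no new reals below $\omega$-sequences — in particular $\mf{h}^{V[G]} = \mf{h}^V$ because the relevant mad families and their refinement structure are absolute enough; this absoluteness point is the secondary subtlety worth a sentence.
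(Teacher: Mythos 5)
Your setup is fine as far as it goes: extracting the unique $T_\alpha \in \mc{A}_\alpha \cap G$ and observing that $\{T_\alpha : \alpha < \mf{h}\}$ generates $G$ is correct and matches the first half of the paper's argument. But the theorem requires producing, in $V[G]$, a surjection \emph{from} $\mf{h}$ \emph{onto} $(\kappa^\omega)^V$, and every map you actually construct goes the wrong way. Your $\pi(T) = \min\{\alpha : T_\alpha \le T\}$ is only defined for $T$ in the upward closure of $\{T_\alpha : \alpha<\mf{h}\}$, i.e.\ for $T \in G$, not on all of $\mbb{P}$; and in any case it maps a subset of $\mbb{P}$ \emph{into} $\mf{h}$, so composing with a ground-model bijection $\kappa^\omega \to \mbb{P}$ yields only a surjection $(\kappa^\omega)^V \to \mf{h}$, which already exists in $V$ (since $\mf{h} \le 2^\omega \le \kappa^\omega$) and witnesses no collapse. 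Similarly, your reduction to ``inject $(\kappa^\omega)^V$ into $\mf{h}$'' names the right target, but the proposed $\Phi(a) = $ ``least $\alpha$ such that $T_\alpha$ decides whether $R_a \in G$ in a way that records $a$'' is not a construction: nothing forces distinct $a$'s to receive distinct ordinals, and ``records $a$'' is precisely the content that is missing.

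The missing device is Proposition~\ref{prop.2.31}: below \emph{every} condition $T$ there is a maximal antichain of size $\kappa^\omega$, so one can fix in $V$, for each $T \in \bigcup_{\alpha<\mf{h}} \mc{A}_\alpha$, an injection $F_T : \kappa^\omega \to \mbb{P}$ whose range is a maximal antichain below $T$. In $V[G]$ define $f(\alpha) := \beta$ iff the unique $T \in \mc{A}_\alpha \cap G$ satisfies $F_T(\beta) \in G$. This is well defined, and it is onto $\kappa^\omega$ because for each fixed $\beta$ the set $\{ F_T(\beta) : T \in \bigcup_{\alpha<\mf{h}} \mc{A}_\alpha \}$ is dense in $\mbb{P}$: given $S$, density of $\bigcup_\alpha \mc{A}_\alpha$ yields some $T \le S$ in some $\mc{A}_\alpha$, and then $F_T(\beta) \le T \le S$. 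So the generic chooses, below each $T_\alpha$, an ordinal $\beta < \kappa^\omega$, and genericity guarantees that every $\beta$ is chosen at some $\alpha$. This is the paper's proof; without some device of this kind your outline does not close.
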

\begin{proof}
We work in the generic extension.
Let $G$ be the generic filter.
By the previous lemma,
 let $\{ \mc{A}_\alpha \subseteq \mbb{P}
 : \alpha < \mf{h} \}$
 be a collection of maximal antichains such that
 $\bigcup_{\alpha < \mf{h}} \mc{A}_\alpha$
 is dense in $\mbb{P}$.
For each $T \in
 \bigcup_{\alpha < \mf{h}} \mc{A}_\alpha$,
 let $F_T : \kappa^\omega \to \mbb{P}$
 be an injection such that
 $\{ F_T(\beta) : \beta < \kappa^\omega \}$
 is a maximal antichain below $T$
 (which exists by Lemma~\ref{splititup}).
Consider the function $f :
 \mf{h} \to \kappa^\omega$
 defined by
 $$f(\alpha) := \beta \Leftrightarrow
 (\exists T \in \mbb{P})\,
 T \in \mc{A}_\alpha \cap G \mbox{ and }
 F_T(\beta) \in G.$$
This is indeed a function because
 for each $\alpha$,
 there is at most one $T$ in
 $\mc{A}_\alpha \cap \mathrm{G}$,
 and there is at most one
 $\beta < \kappa^\omega$ such that
 $F_T(\beta) \in G$.

To show that $f$ surjects onto $\kappa^\omega$,
 fix $\beta < \kappa^\omega$.
We will find an $\alpha < \mf{h}$ such that
 $f(\alpha) = \beta$.
It suffices to show that
 $$\{ F_T(\beta) : T \in \bigcup_
 {\alpha < \mf{h}} \mc{A}_\alpha \}$$
 is dense in $\mbb{P}$.
To show this, fix $S \in \mbb{P}$.
Since $\bigcup_{\alpha < \mf{h}} \mc{A}_\alpha$
 is dense in $\mbb{P}$,
 fix some $\alpha < \mf{h}$ and $T \in \mc{A}_\alpha$
 such that $T \le S$.
We have $F_T(\beta) \le T$,
 so $F_T(\beta) \le S$
 and we are done.
\end{proof}


\section{Minimality of $\omega$-Sequences}\label{sec.6}

For the entire section,
 we will assume $\cf(\kappa) = \omega$.
Sacks forcing was the first forcing shown to add a minimal degree of constructibility.
In \cite{Sacks}, Sacks proved that given a generic filter $G$ for the perfect tree forcing on
 ${^{<\omega} 2}$,
each real $r:\om\ra 2$ in $V[G]$ which is not in $V$ can be used to reconstruct the generic filter $G$.
A forcing {\em adds a minimal degree of constructibility} if 
whenever  $\dot{A}$ is a name
 forced by a condition $p$ to be
 a function from an ordinal to $2$,
 then $p \forces (\dot{A} \in \check{V} \mbox{ or }
 \dot{G} \in \check{V}(\dot{A}))$,
 where $\dot{G}$ is the name for the generic filter
 and $1 \forces \check{V}(\dot{A})$
 is the smallest inner model $M$ such that
 $\check{V} \subseteq M$ and $\dot{A} \in M$.

One may also ask whether the generic extension is minimal with respect to adding new sequences from $\omega$ to a given cardinal.
Abraham \cite{Abraham85} and Prikry proved that the perfect tree forcings and the version of Namba forcing involving subtrees of
 ${^{< \omega} \omega_1}$
 thus adding an unbounded function from $\om$ into $\om_1$ are minimal, assuming $V=L$ (see Section 6 of \cite{Bukovsky/Coplakova90}). 
Carlson, Kunen and Miller showed this to be the case assuming Martin's Axiom and the negation of the Continuum Hypothesis in \cite{Carlson/Kunen/Miller84}.
The question of minimality  was investigated generally for two models of ZFC $M\subseteq N$ (not necessarily forcing extensions) when $N$ contains a new subset of a cardinal regular in $M$ in Section 1 of \cite{Bukovsky/Coplakova90}.
In Section 6 of that paper, 
\Bukovsky\ and \Coplakova\  proved  that  their families of perfect tree and generalized Namba forcings are  minimal with respect to adding new $\om$-sequences of ordinals, but do not produce minimal generic extensions, since $\mathcal{P}(\om)/\Fin$ completely embeds into their forcings.

Brown and Groszek investigated the question of minimality of forcing extensions
 was investigated for forcing posets consisting of superperfect subtrees of
 ${^{<\kappa}\kappa}$,
 where 1) $\kappa$ is an uncountable regular cardinal,
 2) splitting along any branch forms a club set of levels,
 3) and whenever a node splits, its immediate successors are in some $\kappa$-complete, nonprincipal normal filter.
In \cite{Brown/Groszek06}, they proved that this forcing adds a generic of minimal degree if and only if the filter is $\kappa$-saturated.

In this section, 
we show that, assuming that $\kappa$ is a limit of measurable cardinals,
 $\mbb{P}$
 is minimal with respect to
 $\omega$-sequences, meaning if
 $p \forces \dot{A} : \omega \to \check{V}$, then
 $(p \forces \dot{A} \in \check{V}$ or
 $\dot{G} \in \check{V}(\dot{A}))$.
$\bP$ 
does not add a minimal degree of constructibility,
since $\mathcal{P}(\omega)/\Fin$ completely embeds into $\bB$, and that intermediate model has no new $\om$-sequences.

The proof that Sacks forcing $\mbb{S}$ is minimal
 follows once we observe that given an ordinal
 $\alpha$, a name $\dot{A}$ such that
 $p \forces \dot{A} \in {^{\check{\alpha}} 2} - \check{V}$,
 and two conditions $p_1, p_2$,
 there are $p_1' \le p_1$ and $p_2' \le p_2$
 that decide $\dot{A}$ to extend incompatible
 sequences in $V$.
After this observation,
 given any condition $p \in \mbb{S}$, we can
 extend $p$ using fusion to get $q \le p$
 so that which branch
 the generic is through $q$ can be recovered by knowing
 which initial segments (in $V$) the sequence $\dot{A}$
 extends.
This is because every child of a splitting node in $q$
 has been tagged with a sequence in $V$,
 and no two children of a splitting node are tagged
 with compatible sequences.

In Sacks forcing $\mbb{S}$,
 every node has at most $2$ children.
In our forcing $\mbb{P}$ (assuming $\cf(\kappa) = \omega$),
 for each $n < \omega$
 there must be some nodes that have $\ge \kappa_n$ children. 
To make the proof work for $\mbb{P}$, we would like
 that whenever $n < \omega$ and
 $\langle p_\gamma \in \mbb{P} : \gamma < \kappa_n \rangle$
 is a sequence
 of conditions each forcing $\dot{A}$ to be in
 ${^{\check{\alpha}}2} - \check{V}$,
 then there exists a set of pairwise incompatible sequences
 $\{ s_\gamma \in {^{<{\alpha}} 2} : \gamma < \kappa_n \}$ and
 a set of conditions $\{ p_\gamma' \le p_\gamma : \gamma < \kappa_n \}$
 such that
 $(\forall \gamma < \kappa_n)\,
  p_\gamma' \forces
 \check{s}_\gamma
 \sqsubseteq
 \dot{A}$.
However,
 suppose $1 \forces \dot{A} \in {^{\check{\omega}_1} 2}$,
 $2^{< \omega_1} = 2^\omega < \kappa_0$,
 and $\kappa_0$ is a measurable cardinal
 as witnessed by some normal measure.
Then there is a measure one set of
 $\gamma \in \kappa_0$ such that the $s_\gamma$ are all the same.

Thus,
 when we shrink a tree to try to
 assign tags to its nodes,
 there seems to be the possibility that we can shrink it further to cause the resulting tags to give us no information.
There is a special case:
 if $1 \forces \dot{A} : \omega \to \check{V}$
 and $1 \forces \dot{A} \not\in \check{V}$,
 then it is impossible to perform fusion to
 decide more and more of $\dot{A}$ while at the same time
 shrinking to get tags that are identical for each
 stage of the fusion.
The intersection of the fusion sequence would be
 a condition $Q$ such that
 $Q \forces \dot{A} \in \check{V}$,
 which would be a contradiction.
The actual proof by contradiction
 uses a thinning procedure more complicated than
 ordinary fusion.
Our proof will make the special assumption that
 $\kappa$ is a limit of measurable cardinals
 to perform the thinning.

When we say ``thin the tree $T$'',
 it is understood that we mean get
 a subtree $T'$ of $T$ that is still perfect,
 and replace $T$ with $T'$.
When we say ``thin the tree $T$ below $t \in T$'',
 we mean thin $T|t$ to get some $T'$,
 and then replace $T$
 by $T' \cup \{ s \in T : s$ is incompatible with $t \}$.

\begin{definition}
Fix a name $\dot{A}$
 such that $1_\mbb{P} \forces
 \dot{A} : \omega \to \check{V}$ and
 $1_\mbb{P} \forces \dot{A} \not\in \check{V}$.
For each condition $T \in \mbb{P}$,
 let $\psi_T : T \to {^{<\omega} V}$ be the function
 which assigns to each node $t \in T$
 the longest sequence $s = \psi_T(t)$ such that
 $(T | t) \forces \dot{A} \sqsupseteq \check{s}$.
Call a splitting node
 $t \in T$ a \defemph{red} node of $T$ iff the sequences
 $\psi_T(c)$ for $c \in \mathrm{Succ}_T(t)$ are all the same.
Call a splitting node
 $t \in T$ a \defemph{blue} node of $T$ iff the sequences
 $\psi_T(c)$ for $c \in \mathrm{Succ}_T(t)$ are pairwise incomparable,
 where we say two sequences are incomparable iff neither
 is an end extension of the other.
\end{definition}

Although $\psi_T$ and the notions of a red and blue node
 depend on the name $\dot{A}$,
 in practice there will be no confusion.
Note that being blue is preserved when we pass to
 a stronger condition but being red may not be.
For the sake of analyzing the minimality of $\mbb{P}$
 with respect to $\omega$-sequences,
 we want to be able to shrink any perfect tree $T$
 to get some perfect $T' \le T$
 whose splitting nodes are all blue:

\begin{lemma}[Blue Coding]
\label{bluecoding}
Let $T \in \mbb{P}$, $\dot{A}$, and $\alpha \in \mathrm{Ord}$
 be such that $T \forces (\dot{A} : \check{\alpha} \to \check{V})$
 and $T \forces \dot{A} \not\in \check{V}$.
Suppose the following are satisfied:
\begin{itemize}
\item[1)]
T is in weak splitting normal form.
\item[2)]
Each splitting node of $T$ is a blue node of $T$.
\end{itemize}
Then $T \forces \dot{G} \in \check{V}(\dot{A})$,
 where $\dot{G}$ is the generic filter.
\end{lemma}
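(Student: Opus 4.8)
The plan is to fix an arbitrary generic $G$ with $T\in G$, work in $V[G]$, and show that the generic branch $b$ is recoverable from $A:=\dot A[G]$ and that $G$ is in turn recoverable from $b$; composing these shows $G\in V(A)$, which gives $T\forces\dot G\in\check V(\dot A)$. Since the sets $\{S\in\mbb P:\dom(\mbox{Stem}(S))\ge n\}$ are dense, genericity produces a unique $b\in\bigcap_{S\in G}[S]$, and $b\in[T]\subseteq X$. I would reconstruct $b$ from $A$ together with the ground-model parameters $T$ and $\psi_T$. Because $T$ is in weak splitting normal form, the splitting nodes of $T$ on $b$ form a chain $t_0\sqsubseteq t_1\sqsubseteq\cdots$ with $t_n$ the $n$-splitting node, and a short argument from pre-perfectness shows $b$ has no last splitting node, so $\bigcup_n t_n=b$. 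Set $t_0:=\mbox{Stem}(T)$; given $t_n$, the branch $b$ passes through a unique successor $c_n\in\mbox{Succ}_T(t_n)$, and a density argument (the set of $S\le T$ whose stem either extends $c_n$ or is incompatible with $c_n$ is dense below $T$, and an element of it in $G$ must have stem extending $c_n$ since $b\in[S]$) shows $T|c_n\in G$, hence $\psi_T(c_n)\sqsubseteq A$. Since $t_n$ is a blue node, the sequences $\psi_T(c)$ for $c\in\mbox{Succ}_T(t_n)$ are pairwise incomparable, so $c_n$ is the \emph{unique} successor $c$ of $t_n$ with $\psi_T(c)\sqsubseteq A$ — a property checkable in $V(A)$ — and $t_{n+1}$ is then the unique splitting node of $T$ reached by following unique successors up from $c_n$. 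Hence $\langle t_n:n<\omega\rangle\in V(A)$, so $b\in V(A)$.

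Next I would show $G=\{S\in\mbb P:b\in[S]\}$, which places $G\in V(b)\subseteq V(A)$ and finishes the lemma. The inclusion from left to right is immediate. For the other direction, suppose $b\in[S]$ but $S\notin G$; then, since $G$ meets the dense set of conditions that are either below $S$ or incompatible with $S$, there is $S'\in G$ with $S'\perp S$, so $b\in[S]\cap[S']=[S\cap S']$. If $[S\cap S']$ had more than $\kappa$ elements then, by Lemma~\ref{bigclosedhasperfect}, it would contain a non-empty perfect subset, hence a perfect subtree below both $S$ and $S'$, contradicting $S\perp S'$; therefore $|[S\cap S']|\le\kappa$. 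On the other hand, the generic branch $b$ lies in no ground-model set of size $\le\kappa$: given $S_0\in\mbb P$ and $C\in V$ with $|C|\le\kappa$, the construction in the proof of Proposition~\ref{splititup} yields $\kappa^\omega$ pairwise disjoint non-empty sets $[T_f]\subseteq[S_0]$, at most $|C|\le\kappa<\kappa^\omega$ of which can meet $C$, so some perfect $T_f\le S_0$ has $[T_f]\cap C=\emptyset$; hence $\{S_1\le S_0:[S_1]\cap C=\emptyset\}$ is dense below $S_0$, and $G$ meets it. This contradicts $b\in[S\cap S']$, so the characterization holds.

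The step I expect to be the main obstacle is the characterization $G=\{S:b\in[S]\}$, and within it the fact that the generic branch avoids every ground-model set of size $\le\kappa$; this is the analogue of ``the Sacks generic real avoids all ground-model countable sets,'' and it is where the cardinal arithmetic $\kappa^\omega>\kappa$ and the abundance of pairwise incompatible perfect subtrees of any condition (Proposition~\ref{splititup}) do the work. By contrast, the recursion recovering $b$ from $A$ is exactly what hypotheses (1) and (2) of the lemma were designed to enable, so I expect it to be routine once the observation that $b$ meets infinitely many splitting nodes of $T$ — a consequence of pre-perfectness — is in place.
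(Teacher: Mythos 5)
Your proof is correct and follows essentially the same route as the paper: use blueness to identify, at each splitting node along the generic branch, the unique successor $c$ with $\psi_T(c)\sqsubseteq \dot A_G$, and iterate (using weak splitting normal form) to reconstruct the branch from $\dot A_G$ and ground-model data. The only difference is that you also spell out why $G$ is recoverable from the branch $b$ (via the density argument that the generic branch avoids ground-model sets of size $\le\kappa$), a step the paper's proof asserts only parenthetically.
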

\begin{proof}
Unlike almost every other proof in this paper,
 we will work in the extension.
Let $G$ be the generic filter,
 $g := \bigcap G$,
 $\check{V}_G$
 be the ground model, and $\dot{A}_G$
 be the interpretation of the name $\dot{A}$.
It suffices to prove how $g$ can be constructed
 from $\dot{A}_G$ and $\check{V}_G$.
We have that $g$ is a path through $T$.
Let $t_0$ be the stem of $T$.
Now $g$ must extend one of the children of $t_0$ in $T$.
Because $t_0$ is blue in $T$,
 this child $c$ can be defined
 as the unique $c \in \mathrm{Succ}_T(t_0)$ satisfying
 $\psi_T(c) \sqsubseteq \dot{A}_G$.
Call this child $c_0$.
Now let $t_1$ be the unique minimal extension of $c_0$
 that is splitting.
In the same way, we can define the
 $c \in \mathrm{Succ}_T(t_1)$ that $g$ extends as
 the unique child $c$ that satisfies
 $\psi_T(c) \sqsubseteq \dot{A}_G$.
Call this child $c_1$.
We can continue like this, and the sequence
 $c_0 \sqsubseteq c_1 \sqsubseteq c_2 \sqsubseteq ...$
 is constructible
 from $\check{V}_G$ and $\dot{A}_G$.
Since $g$ is the unique path that extends each $c_i$,
 we have that $g$ is constructible
 from $\check{V}_G$ and $\dot{A}_G$
 (and so $G$ is as well).
\end{proof}

\begin{lemma}[Blue Selection]
\label{blueselection}
Let $\lambda_1 < \lambda_2$ be cardinals.
Suppose there is an ultrafilter $\mc{U}$ on $\lambda_2$
 that is uniform and $\lambda_1$-complete
 (which happens if $\lambda_2$ is a measurable cardinal
 and $\mc{U}$ is a normal ultrafilter on $\lambda_2$).
Let $\langle S_\alpha \in [
 \bigcup_{\gamma \in \mathrm{Ord}} {^\gamma V}]^{\lambda_2} :
 \alpha < \lambda_1 \rangle$ be a $\lambda_1$-sequence
 of size $\lambda_2$ sets of sequences,
 where within each $S_\alpha$ the sequences are pairwise
 incomparable.
Then there is a sequence
 $\langle a_\alpha \in S_\alpha : \alpha < \lambda_1 \rangle$
 such that the $a_\alpha$ are pairwise incomparable.
\end{lemma}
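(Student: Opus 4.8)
The plan is to construct the $a_\alpha$ by recursion on $\alpha < \lambda_1$. Fix for each $\alpha < \lambda_1$ a bijection $e_\alpha : \lambda_2 \to S_\alpha$ and let $\mc{U}_\alpha$ denote the image of $\mc{U}$ under $e_\alpha$; this is again a uniform, $\lambda_1$-complete ultrafilter on $S_\alpha$, so every subset of $S_\alpha$ of cardinality $< \lambda_2$ lies outside $\mc{U}_\alpha$, and any union of fewer than $\lambda_1$ sets lying outside $\mc{U}_\alpha$ again lies outside $\mc{U}_\alpha$. Call a subset of $S_\alpha$ \emph{big} if it belongs to $\mc{U}_\alpha$ and \emph{small} otherwise, and, for $a \in S_\alpha$ and $\beta \neq \alpha$, say that $a$ is \emph{light for $S_\beta$} if the set of $s \in S_\beta$ with $s \sqsupseteq a$ is small in $S_\beta$ (and \emph{heavy for $S_\beta$} otherwise). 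The recursion will maintain, at each stage $\alpha$, that (a) $a_\alpha \in S_\alpha$; (b) $a_\alpha \perp a_\gamma$ for every $\gamma < \alpha$; and (c) $a_\alpha$ is light for $S_\beta$ for every $\beta$ with $\alpha < \beta < \lambda_1$. Clause (c) looks ahead to the sets not yet processed, and is the real point of the construction.

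The key combinatorial observation is that for any two distinct indices $\alpha$ and $\beta$, \emph{at most one} element of $S_\alpha$ fails to be light for $S_\beta$. Indeed, if $a_1 \neq a_2$ were elements of $S_\alpha$ each of which is heavy for $S_\beta$, then the set of $s \in S_\beta$ with $s \sqsupseteq a_1$ and the set of $s \in S_\beta$ with $s \sqsupseteq a_2$ would both belong to $\mc{U}_\beta$, so their intersection would be nonempty; any $s$ in it extends both $a_1$ and $a_2$, forcing $a_1$ and $a_2$ to be comparable, which contradicts that $S_\alpha$ is an antichain. I will also use the trivial fact that for any sequence $a$ the initial segments of $a$ that happen to lie in $S_\alpha$ form a $\sqsubseteq$-chain inside the antichain $S_\alpha$, hence number at most one.

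At stage $\alpha$ of the recursion, the set of elements of $S_\alpha$ I must avoid splits into two parts. First, the set $F$ of $a \in S_\alpha$ that are heavy for $S_\beta$ for some $\beta$ with $\alpha < \beta < \lambda_1$; by the observation above $|F| \le \lambda_1 < \lambda_2$, so $F$ is small. Second, for each $\gamma < \alpha$, the set of $a \in S_\alpha$ comparable with $a_\gamma$, which is the union of the (at-most-one-element) set of initial segments of $a_\gamma$ lying in $S_\alpha$ with the set of $a \in S_\alpha$ extending $a_\gamma$; this latter set is small because clause (c) of the invariant, invoked at stage $\gamma$ with $\beta := \alpha$, says precisely that $a_\gamma$ is light for $S_\alpha$. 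Since $|\alpha| < \lambda_1$, the union over $\gamma < \alpha$ of these sets, together with $F$, is small in $S_\alpha$, hence a proper subset of $S_\alpha$; choosing $a_\alpha$ outside it preserves (a)--(c). After $\lambda_1$ steps the sequence $\langle a_\alpha : \alpha < \lambda_1 \rangle$ satisfies $a_\alpha \perp a_\beta$ for all $\alpha \neq \beta$ by clause (b), which is exactly the conclusion.

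The step I expect to be the real obstacle is recognizing that a naive recursion keeping only clauses (a) and (b) can get stuck: an already-chosen $a_\gamma$ might be an initial segment of \emph{every} element of some later $S_\beta$, leaving no admissible choice when we reach $\beta$. Clause (c) is precisely the prophylactic against this, and the only thing that makes it affordable is the counting in the second paragraph --- for each of the $\le \lambda_1$ later indices, just one element of $S_\alpha$ is ruled out --- combined with the uniformity of $\mc{U}$, which turns ``$\le \lambda_1$ elements'' into ``a small set''. The remaining verifications are routine bookkeeping with the $\lambda_1$-completeness of the ultrafilter; note in particular that only $\lambda_1$-completeness is used, as hypothesized, not the full $\lambda_2$-completeness that a measurable $\lambda_2$ would supply.
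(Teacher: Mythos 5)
Your proof is correct and follows essentially the same route as the paper's: the crucial point in both is that for each pair of indices at most one element of $S_\alpha$ can have a $\mc{U}_\beta$-large set of comparable elements in $S_\beta$ (since two such would admit a common extension, contradicting that $S_\alpha$ is an antichain), after which uniformity and $\lambda_1$-completeness let the recursion go through. The only difference is organizational --- the paper prunes the ``heavy'' elements from every $S_\alpha$ up front to form $S_\alpha'$ and then recurses, whereas you fold that pruning into the recursion invariant (clause (c)) --- and your separate handling of the at-most-one initial segment replaces the paper's subclaim that a measure-one set of comparable elements must consist of extensions.
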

\begin{proof}
The ultrafilter $\mc{U}$ on $\lambda_2$
 induces an ultrafilter on each $S_\alpha$,
 so we may freely talk about a measure one subset of $S_\alpha$.
Given sequences $a, b$, we write $a || b$ to mean they
 are comparable (one is an initial segment of the other).

\underline{Claim 1}:
Fix $\alpha_1, \alpha_2 < \lambda_1$.
Then there is at most one $a \in S_{\alpha_1}$ such that
 $B_a := \{ b \in S_{\alpha_2} : a || b \}$ has measure one.

\underline{Subclaim}:
Suppose $a \in S_{\alpha_1}$ is such that $B_a$ has measure one.
Then all elements of $B_a$ extend $a$.
To see why, suppose there is some $b \in B_a$
 which does not extend $a$.
Then $b$ is an initial segment of $a$.
Let $b'$ be another element of $B_a$.
Since $b \perp b'$,
 it must be that $a \perp b'$, which is a contradiction.

Towards proving Claim 1, suppose $a, a'$ are distinct elements
 of $S_{\alpha_1}$ such that the sets $B_a$ and $B_{a'}$ have
 measure one.
There must be some $b \in B_a \cap B_{a'}$.
We have that $b$ extends both $a$ and $a'$,
 which is impossible because $a \perp a'$.
This proves Claim 1.

We will now prove the theorem.
For each $\alpha_1, \alpha_2 < \lambda_1$,
 remove the unique element of $S_{\alpha_1}$
 that is comparable with measure one elements of
 $S_{\alpha_2}$
 (if it exists).
This replaces each set $S_\alpha$ with
 a new set $S_\alpha'$.
Since $\lambda_1 < \lambda_2$
 and the ultrafiler $\mc{U}$ on $\lambda_2$ is uniform, each
 $S_\alpha'$ has size $\lambda_2$
 (and is concentrated on by the
 ultrafilter on $S_\alpha$).
Let $a_0$ be any element of $S_0'$.
Now fix $0 < \alpha < \lambda_1$
 and suppose we have chosen
 $a_\beta \in S_\beta'$ for each $\beta < \alpha$.
For each $\beta < \alpha$, let
 $B_\beta := \{ b \in S_\alpha : a_\beta || b \}$.
Each set $B_\beta$ has measure zero,
 and there are $< \lambda_1$ of them.
By the $\lambda_1$-completeness of the ultrafilter,
 there must be an element of $S_\alpha'$ not in any
 $B_\beta$ for $\beta < \alpha$.
Let $a_\alpha$ be any such element.
The sequence
 $\langle a_\alpha : \alpha < \lambda_1 \rangle$
 works as desired.
\end{proof}

The next lemma gives a flavor
 of how we can shrink to either
 get a red or a blue node.

\begin{lemma}[Red-Blue Concentration]
\label{redblue}
Let $\lambda_1 < \lambda_2$ be such that
 $\lambda_1$ is a measurable cardinal and
 there is a uniform $\lambda_1$-complete ultrafilter on $\lambda_2$.
Let $T \in \mbb{P}$ and $t \in T$ be the stem of $T$.
Assume
 $|\mathrm{Succ}_T(t)| = \lambda_1$.
Let $\mc{U}$ be an ultrafilter on
 $\mathrm{Succ}_T(t)$ that comes from
 a fixed normal ultrafilter on $\lambda_1$
 and a fixed bijection
 between $\lambda_1$ and $\mathrm{Succ}_T(t)$.
So $\mc{U}$ is $\lambda_1$-complete and
 concentrates on $\mathrm{Succ}_T(t)$.
For each $c \in \mathrm{Succ}_T(t)$,
 let $s_c \sqsupseteq c$ be the shortest
 splitting extension of $c$, and assume that in fact
 $|\mathrm{Succ}_T(s_c)| = \lambda_2$ and there is
 a uniform $\lambda_1$-complete ultrafilter $\mc{U}_c$
 which concentrates on $\mathrm{Succ}_T(s_c)$.
Assume further that for each
 $c \in \mathrm{Succ}_T(t)$,
 $s_c$ is either a red node of $T$
 or a blue node of $T$.
Then there is a set
 $C \subseteq \mathrm{Succ}_T(t)$ in $\mc{U}$
 and for each $c \in C$ a tree
 $T_c \subseteq T | c$
 such that when we define
 $T' := \bigcup_{c \in C} T_c$,
 then exactly one of the following holds:
\begin{itemize}
\item[1)] The values of
 $\psi_{T'}(c)$ for $c \in C$
 are pairwise incomparable, so
 $t$ is a blue node of $T'$;
\item[2)] The values of
 $\psi_{T'}(c)$ for $c \in C$
 are all the same,
 so $t$ is a red node of $T'$.
Also, for each $c \in C$,
 we have that
 $\mc{U}_c$ concentrates on
 $\textrm{Succ}_{T'}(s_c)$
 and $s_c$ is a red node of $T'$.
This implies that $\psi_{T'}(\tilde{c})$
 is the same for each
 $\tilde{c} \in \textrm{Succ}_{T'}(s_c)$ and
 $c \in \textrm{Succ}_{T'}(t)$.
\end{itemize}
\end{lemma}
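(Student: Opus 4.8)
The plan is to reason by cases depending on the colors of the second-level splitting nodes $s_c$, and then use the measure $\mc{U}$ on $\textrm{Succ}_T(t)$ together with the measures $\mc{U}_c$ on $\textrm{Succ}_T(s_c)$ to homogenize. First I would observe that, passing to a set in $\mc{U}$, we may assume that all the $s_c$ have the same color: either every $s_c$ ($c$ in some $\mc{U}$-set) is red, or every $s_c$ is blue. This is immediate because $\mc{U}$ is an ultrafilter and $\textrm{Succ}_T(t)$ is split into the red-$s_c$'s and the blue-$s_c$'s.

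In the case where almost every $s_c$ is \emph{red}: for each such $c$, all of $\psi_T(\tilde c)$ for $\tilde c \in \textrm{Succ}_T(s_c)$ agree; call this common value $\sigma_c$. Set $T_c := T|c$ for now (no shrinking below $s_c$ is needed), so $\mc{U}_c$ still concentrates on $\textrm{Succ}_{T_c}(s_c)$ and $s_c$ remains red in $T_c$. Also $\psi_{T_c}(c) = \sigma_c$ since $T|c$ forces $\dot A$ to extend exactly $\sigma_c$ at the node $c$ (the children of $s_c$ all force the same longer sequence $\sigma_c$, and no splitting happens between $c$ and $s_c$, so no further decision is forced at $c$ itself — here I'd want to check that $\psi_T(c)=\psi_T(s_c)=\sigma_c$). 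Now we must decide whether the $\sigma_c$, for $c$ ranging over a $\mc{U}$-set, are pairwise incomparable (giving conclusion 1) or whether we can shrink to a $\mc{U}$-set on which they are all equal (giving conclusion 2). Since each $\sigma_c$ is a sequence in $V$ and they are indexed by $\lambda_1$-many $c$'s with a $\lambda_1$-complete ultrafilter, I would apply the standard fact that a $\lambda_1$-complete ultrafilter on a set of sequences, along which the sequences are not pairwise incomparable on any large set, must concentrate on a single value: if on no $\mc{U}$-set are the $\sigma_c$ pairwise incomparable, then the relation ``comparable'' is large, and by $\lambda_1$-completeness (Claim-1-style argument as in Blue Selection) there is a single $\sigma$ with $\{c : \sigma_c = \sigma\} \in \mc{U}$; restrict to that set to get conclusion 2. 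Otherwise, Blue Selection (Lemma~\ref{blueselection}) is not even needed here — we directly have a $\mc{U}$-set witnessing conclusion 1.

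In the case where almost every $s_c$ is \emph{blue}: for each such $c$, the sequences $\psi_T(\tilde c)$, $\tilde c \in \textrm{Succ}_T(s_c)$, are pairwise incomparable, and there are $\lambda_2$ of them with the uniform $\lambda_1$-complete measure $\mc{U}_c$. Apply Blue Selection (Lemma~\ref{blueselection}) with $\lambda_1 := |\textrm{Succ}_T(t)| = \lambda_1$ and $\lambda_2$ as given, to the $\lambda_1$-sequence $\langle S_c : c \in \textrm{Succ}_T(t)\rangle$ where $S_c := \{\psi_T(\tilde c) : \tilde c \in \textrm{Succ}_T(s_c)\}$: this yields a choice of $\tilde c(c) \in \textrm{Succ}_T(s_c)$ for each $c$ such that the selected sequences $\psi_T(\tilde c(c))$ are pairwise incomparable. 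Let $T_c := T | \tilde c(c)$, and set $C$ to be the $\mc{U}$-set on which $s_c$ is blue. Then in $T' = \bigcup_{c\in C} T_c$ we have $\psi_{T'}(c) \sqsupseteq \psi_T(\tilde c(c))$ (shrinking to $\tilde c(c)$ can only lengthen the decided sequence), and since the $\psi_T(\tilde c(c))$ are already pairwise incomparable, so are their extensions $\psi_{T'}(c)$; hence $t$ is blue in $T'$, which is conclusion 1.

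The main obstacle I expect is bookkeeping around $\psi$: specifically, verifying that shrinking $T$ to the various $T_c$ and taking the union $T'$ does not decrease the decided sequences in a way that destroys incomparability, and checking in the red case that $\psi_{T'}(c)$ genuinely equals the common value $\sigma_c$ (so that ``$t$ red in $T'$'' really holds) rather than something shorter. This requires the simple monotonicity observation that $S \le T$ implies $\psi_S(t) \sqsupseteq \psi_T(t)$ for $t \in S$, plus the fact that a node forces nothing more about $\dot A$ than its splitting descendants do. A second, minor obstacle is confirming that the hypotheses of Blue Selection are met — in particular that $\lambda_1 < \lambda_2$ and the measures have the right completeness and uniformity — but these are handed to us directly in the statement.
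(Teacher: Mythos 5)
Your overall architecture matches the paper's proof: first homogenize the colour of the nodes $s_c$ on a $\mc{U}$-large set, handle the all-blue case via Blue Selection applied to the families $\{\psi_T(\tilde c):\tilde c\in\textrm{Succ}_T(s_c)\}$ with the measures $\mc{U}_c$, and in the all-red case work with the common values $\sigma_c=\psi_T(c)$. The blue case as you describe it is correct (and your monotonicity worry is harmless: here one even has equality $\psi_{T'}(c)=\psi_T(\tilde c(c))$, and in any case end-extensions of pairwise incomparable sequences stay pairwise incomparable).

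The gap is in the red case. The ``standard fact'' you invoke --- that a $\lambda_1$-complete ultrafilter which does not concentrate on a pairwise incomparable family must concentrate on a single value --- is false for a general $\lambda_1$-complete measure, and the Claim~1 argument from Blue Selection does not transfer, because that argument essentially uses that the ambient family $S_{\alpha_2}$ is \emph{already pairwise incomparable} (that is what forces every element of $B_a$ to extend $a$). For a counterexample to your dichotomy, let $\mc{W}$ be a normal measure on $\lambda_1$, let $\mc{U}$ be the $\lambda_1$-complete ultrafilter on $\lambda_1\times\lambda_1$ induced by the product $\mc{W}\times\mc{W}$, and let $\sigma_{(a,b)}:=\langle a\rangle$. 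Then every fiber $\{(a,b):b<\lambda_1\}$ is $\mc{U}$-null, so no single value is concentrated on; yet every $\mc{U}$-large set meets some fiber in at least two points, so no $\mc{U}$-large set is pairwise incomparable. What is actually needed here is the partition property $\lambda_1\rightarrow(\mc{U})^2_2$ for the colouring of pairs by ``comparable/incomparable'' --- this is exactly what the paper invokes from the measurability of $\lambda_1$ (it is Rowbottom's theorem and holds when $\mc{U}$ is normal) --- followed by a pigeonhole on lengths using $\omega_1$-completeness to upgrade ``pairwise comparable on a large set'' to ``all equal on a large set.'' Note also that the partition step can return the \emph{incomparable} side even when all $s_c$ are red, so the red case must be allowed to terminate in conclusion~1) as well, as the paper's Case~2 does; your write-up implicitly assumes the red case always ends in conclusion~2) unless the $\sigma_c$ were incomparable to begin with, which is the same dichotomy issue in another guise.
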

\begin{proof}
First use the fact that $\mc{U}$
 is an ultrafilter on $\textrm{Succ}_T(t)$
 to get a set $C_0 \subseteq \textrm{Succ}_T(t)$
 in $\mc{U}$ such that the nodes
 $s_c$ for $c \in C_0$ are either all blue in $T$
 or all red in $T$.

Suppose the nodes $s_c$
 (for $c \in C$) are all blue in $T$.
Set $C := C_0$.
Then use the lemma above
 (the Blue Selection Lemma) to pick one child
 $\tilde{c}_c$ of each $s_c$ (for $c \in C$)
 such that the resulting sequences
 $\psi_{T}(\tilde{c}_c)$ are all
 pairwise incomparable.
It is here that we use the fact that the
 ultrafilters $\mc{U}_c$ are $\lambda_1$-complete.
Now define each $T_c \subseteq T | c$ to be
 $T_c := T | {\tilde{c}_c}$.
Define $T'$ to be
 $\bigcup_{c \in C} T_c$.
We have $
 \psi_{T}(\tilde{c}_c) =
 \psi_{(T | \tilde{c}_c)}(\tilde{c}_c) =
 \psi_{T_c}(c) =
 \psi_{T'}(c)$.
Since the $\psi_T(\tilde{c}_c)$
 for $c \in C$ are pairwise incomparable,
 then the $\psi_{T'}(c)$ for $c \in C$ are pairwise
 incomparable, so
 1) holds.

Suppose now that the nodes $s_c$
 (for $c \in C_0$) are all red in $T$.
Given $c \in C_0$,
 $\psi_{T}(\tilde{c})$ does not depend on
 which $\tilde{c} \in \textrm{Succ}_T(s_c)$ is used,
 so each $\psi_T(\tilde{c})$
 for $\tilde{c} \in \mathrm{Succ}_T(s_c)$
 in fact equals
 $\psi_T(s_c)$.
We also have $\psi_T(s_c) = \psi_T(c)$
 for each $c \in C_0$.
We will now use the assumption that
 $\lambda_1$ is a measurable cardinal.
Since $\lambda_1$ is a measurable cardinal,
 if $\mc{V}$ is any normal ultrafilter on $\lambda_1$,
 then $\lambda_1 \rightarrow (\mc{V})^2_2$.
Thus, there is a set
 $C_1 \subseteq C_0$ in $\mc{U}$ such that
 the sequences $\psi_T(c)$ for $c \in C_1$
 are either all pairwise comparable or all pairwise
 incomparable.

\underline{Case 1}:
If they are all pairwise comparable,
 then because they might have different lengths,
 use the $\omega_1$-completeness of $\mc{U}$
 to get a set $C_2 \subseteq C_1$ in $\mc{U}$
 such that the $\psi_T(c)$ for $c \in C_2$
 are identical
 (by getting them to have the same lengths).
Set $C := C_2$ and set each $T_c \subseteq T | c$
 to be $T_c := T | c$ (no thinning of the
 subtrees is necessary).
We have that 2) holds.

\underline{Case 2}:
If they are pairwise incomparable,
 then set $C := C_1$ and
 set each $T_c \subseteq T | c$
 to be $T_c := T | c$
 (no thinning of subtrees is necessary).
We have that 1) holds.
\end{proof}

We are now ready for the fundamental
 lemma needed to analyze the minimality of $\mbb{P}$
 (for functions with domain $\omega$).
\begin{lemma}[Blue Production for $\dot{A} : \omega \to \check{V}$]
\label{blueproduction}
Assume $\cf(\kappa) = \omega$.
Fix $n < \omega$.
Suppose $\kappa_{n} < \kappa_{n+1} < ...$
 are all measurable cardinals.
Let $T \in \mbb{P}$ with stem $s \in T$.
Let $\dot{A}$ be such that
 $T \forces \dot{A} : \omega \to \check{V}$ and
 $T \forces \dot{A} \not\in \check{V}$.
Suppose $s$ has exactly
 $\kappa_n$ children in $T$.
Then there is some perfect $W \subseteq T$ such that
 $s$ has $\kappa_n$ children in $W$ and
 $s$ is blue in $W$.
\end{lemma}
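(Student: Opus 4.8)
The plan is to build $W$ by a recursive thinning of $T$, using the measurability of $\kappa_n,\kappa_{n+1},\dots$ to homogenize the behaviour of $\psi_T$ at each stage, and to argue that the recursion must terminate in the ``blue'' outcome because $T\forces\dot A\notin\check V$ rules out the alternative. First I would do some preprocessing: composing with an embedding as in Observation~\ref{embeddingobs} and invoking Proposition~\ref{cangetstrongnormalform}, I may assume that in $T|s$ the stem $s$ is $0$-splitting with exactly $\kappa_n$ children and, more generally, that the $k$-th splitting level below $s$ consists of nodes with exactly $\kappa_{n+k}$ children. Thus every splitting node below $s$ has branching a measurable cardinal, and these cardinals strictly increase with depth; for each such node $v$ I fix a uniform normal (hence $\kappa_n$-complete, indeed $\kappa_m$-complete when $v$ has $\kappa_m$ children) measure $\mc U_v$ on $\mbox{Succ}_T(v)$. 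Two remarks are used throughout: $\psi_T(t)$ is always a finite sequence, since $T|t\forces\dot A\notin\check V$; and $\psi$ is monotone in the sense that it is constant along non-splitting segments and can only grow when we pass to a subtree.

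At the start of the recursion I would apply the partition relation $\kappa_n\to(\mc U_s)^2_2$ to obtain a set $C\in\mc U_s$ of children of $s$ on which the sequences $\psi_T(c)$ are either pairwise incomparable or pairwise comparable. In the first case, restricting $T$ to the children in $C$ already makes $s$ blue (note $|C|=\kappa_n$ since $\mc U_s$ is uniform on $\kappa_n$ points), and we are done. In the second case, $\omega_1$-completeness of $\mc U_s$ lets me further assume they are all equal to a common value $s^\ast$, and the real work begins. Now I would repeatedly apply the Red--Blue Concentration Lemma~\ref{redblue} (with $\lambda_1=\kappa_{n+k}$, $\lambda_2=\kappa_{n+k+1}$) and the Blue Selection Lemma~\ref{blueselection} (with $\lambda_1=\kappa_n=|C|$ and $\lambda_2$ the branching of the relevant deeper splitting level) to the subtrees hanging below the children of $s$. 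Either the procedure uncovers, for a measure-one set $C'\subseteq C$, a node $d_c\sqsupseteq c$ with $\psi_T(d_c)=:u_c$ such that the $u_c$ ($c\in C'$) are pairwise incomparable, in which case $W:=\bigcup_{c\in C'}(T|d_c)$ together with the connecting initial segments is a perfect subtree with $\psi_W(c)\sqsupseteq u_c$, so the $\psi_W(c)$ are pairwise incomparable and $s$ is blue in $W$; or else the analysis forces the common value $s^\ast$ at the children of $s$ to strictly increase while retaining a measure-one set of children of $s$ and full branching at all deeper levels, and we recurse. The point favouring the blue outcome is that $T\forces\dot A\notin\check V$ makes the set of nodes at which $\psi$ strictly grows dense below any node, so one can always push $\psi$ to grow somewhere; moreover $T\forces\dot A\notin\check V$ also prevents the recursion from running forever, because a fusion limit $W^\ast$ surviving infinitely many strict growths at some child $c$ of $s$ would have $\psi_{W^\ast}(c)$ an infinite sequence, i.e.\ $W^\ast|c\forces\dot A\in\check V$, contradicting $T\forces\dot A\notin\check V$.

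The main obstacle is organizing this recursion so that the termination argument is genuinely valid. Naive re-anchoring of each child $c$ at deeper and deeper nodes would make the limiting subtree below $c$ a single branch rather than a perfect tree, so the fusion has to be carried out more carefully, preserving enough branching at every level while still homogenizing $\psi$ — which is precisely why Red--Blue Concentration is phrased to return trees in which the fixed measures still concentrate on the successor sets and the subtrees below the second splitting level are left untouched. Fitting together the bookkeeping of measure-one sets, the coherent choice of measures down the tree, and the simultaneous control of $\psi$ across all $\kappa_n$ children of $s$ into one genuine perfect condition $W$ is the delicate part; once that is arranged, the remaining claims ($W\le T|s$, $s$ having $\kappa_n$ children in $W$, and $s$ being blue in $W$) follow from the monotonicity remarks above and the uniformity of $\mc U_s$.
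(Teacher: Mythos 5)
Your proposal follows essentially the same route as the paper's proof: an ultrafilter-guided recursive thinning that homogenizes each splitting node as red or blue via the partition relation and $\omega_1$-completeness, invokes the Blue Selection and Red--Blue Concentration lemmas to propagate incomparability upward, and terminates because a perfect fusion limit surviving every red stage would decide all of $\dot{A}$, contradicting $T \forces \dot{A} \notin \check{V}$. The bookkeeping you flag as delicate is exactly what the paper's recursive function $\Phi(Q,t,\vec{\mc{U}},m,k)$ formalizes (its invariant being ``blue, or red with $\dom(\psi)\ge m+k$''), so your outline is correct and matches the paper's argument.
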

\begin{proof}
To prove this result,
 we will frequently pick some node
 in a tree and fix an ultrafilter which
 concentrates on the set of its children in that tree.
When we shrink the tree further,
 we will ensure that as long as
 the node has $> 1$ child, then the ultrafilter
 will still concentrate on the set of its children.
To index this,
 we will have partial functions which map
 nodes to ultrafilters.
We will start with the empty partial function.
Once we attach an ultrafilter to a node,
 will never attach a different ultrafilter
 to the same node later.

We will define a (partial) function $\Phi$ recursively.
As input it will take in a tuple
 $\langle Q, t, \vec{\mc{U}}, m, k \rangle$,
 and as output it will return
 $\langle Q', \vec{\mc{U}}' \rangle$.
$Q \supseteq Q'$ are perfect trees.
$\vec{\mc{U}} \subseteq \vec{\mc{U}}'$ are partial functions,
 mapping nodes to ultrafilters.
$m$ and $k$ are both numbers $< \omega$.
$Q$ has stem $t$
 (passing the stem $t$ to the function $\Phi$ is redundant,
 but we do it for emphasis).
The node $t \in Q$ has at least $\kappa_m$ children in $Q$,
 it is in $Q'$, and
 it has exactly $\kappa_m$ children in $Q'$.
Moreover,
 $t \in \dom(\vec{\mc{U}}')$ and
 $\vec{\mc{U}}'(t)$ concentrates on
 $\mathrm{Succ}_{Q'}(t)$.
The number $k$ is how many
 remaining recursive steps to take.
Finally, one of the following holds
 (note the additional purpose of $m$ and $k$):
\begin{itemize}
\item[1)] $t$ is blue in $Q'$, or
\item[2)] $t$ is red in $Q'$ and
 $\dom(\psi_{Q'}(t)) \ge m+k$.
\end{itemize}
That is, if $t$ is red in $Q'$,
 then at least the first
 $m + k$ values of $\dot{A}$ are decided
 by $(Q' | t) = Q'$.
We will now define $\Phi$ recursively on $k$:

\underline{$\Phi(Q,t,\vec{\mc{U}},m,0)$}:
First, remove children of $t$
 so that in the resulting tree
 $Q_0 \subseteq Q$, $t$ has \textit{exactly}
 $\kappa_m$ children.
If this is impossible,
 then the function is being used incorrectly,
 so leave the function undefined
 on this input.
At this point, we should have
 $t \not\in \dom(\vec{\mc{U}})$, otherwise the function
 is being used incorrectly.
Let $\mc{U}$ be a $\kappa_m$-complete ultrafilter
 on $\mathrm{Succ}_{Q_0}(t)$
 that is induced by a normal ultrafilter
 on $\kappa_m$
 and a bijection between
 $\mathrm{Succ}_{Q_0}(t)$ and $\kappa_m$.
Attach this ultrafilter to $t$ by defining
 $\vec{\mc{U}}' := \vec{\mc{U}} \cup \{ (t, \mc{U}) \}$.

We now must define $Q' \subseteq Q$.
For each $c \in \mathrm{Succ}_{Q_0}(t)$,
 let $U_c \subseteq Q_0 | c$ be some condition
 which decides at least the first $m+0$ values of $\dot{A}$.
Let $Q_1 := \bigcup_c U_c$.
We have $Q_1 \subseteq Q_0$.
Of course, $\mathrm{Succ}_{Q_1}(t) = \mathrm{Succ}_{Q_0}(t)$.
Consider the coloring $b : [\mathrm{Succ}_{Q_1}(t)]^2 \to 2$
 defined by $b(c_1, c_2) = 1$ iff
 $\psi_{Q_1}(c_1)$ and $\psi_{Q_1}(c_2)$
  are comparable, and $b(c_1, c_2) = 0$ otherwise.
Since the ultrafilter $\mc{U}$
 which concentrates on $\mathrm{Succ}_{Q_1}(t)$
 is induced by a normal ultrafilter on $\kappa_m$,
 fix a set $C_0 \subseteq \mathrm{Succ}_{Q_1}(t)$ in $\mc{U}$
 that will homogenize the coloring $b$.
Hence the sequences
 $\psi_{Q_1}(c)$ for $c \in C_0$ are either pairwise
 incomparable or pairwise comparable.

Let $Q_2 \subseteq Q_1$ be the tree obtained by only
 removing the children of $t$ that are not in $C_0$.
If the sequences $\psi_{Q_2}(c) = \psi_{Q_1}(c)$
 for $c \in C_0$
 are pairwise incomparable,
 then we are done by defining $Q' := Q_2$
 ($t$ is blue in $Q_2$).
If not, then apply the pigeon hole principle for
 $\omega_1$-complete ultrafilters to get a set
 $C_1 \subseteq C_0$ in $\mc{U}$ such that all
 $\psi_{Q_2}(c)$ sequences for $c \in C_1$
 are the \textit{same}.
Let $Q_3 \subseteq Q_2$ be the tree obtained from $Q_2$ by only
 removing the children of $t$ that are not in $C_1$.
We are done by defining $Q' := Q_3$
 ($t$ is red in $Q_3$ and $Q_3$ decides
 at least the first $m+0$ values of $\dot{A}$).

\underline{$\Phi(Q,t,\vec{\mc{U}},m,k+1)$}:
It must be that $t$ has $\kappa_m$ children in $Q$,
 otherwise the function is being used incorrectly.
Also, it must be that $t \in \dom(\vec{\mc{U}})$
 and $\vec{\mc{U}}(t)$ concentrates on
 $\mathrm{Succ}_{Q}(t)$.

Temporarily fix a $c \in \mathrm{Succ}_{Q}(t)$.
Let $s_c \sqsupseteq c$ be a minimal
 extension in $Q$ with $\ge \kappa_{m+1}$ children
 (if $k > 0$, by the way the function is used,
 the node $s_c$ will be unique).
Let $U_c := Q | s_c$.
Let $\langle U_c', \vec{\mc{U}}_c \rangle :=
 \Phi(U_c, s_c, \vec{\mc{U}}, m+1, k)$.
We have that $s_c \in \dom(\vec{\mc{U}}_c)$
 and $\vec{\mc{U}}_c(s_c)$ is a
 $\kappa_{m+1}$-complete ultrafilter that concentrates
 on the size $\kappa_{m+1}$ set of children of
 $s_c$ in $U_c'$.
Also, $s_c$ is either a blue node of $U_c'$,
 or it is a red node of $U_c'$ and $U_c'$
 decides at least the first $(m+1)+k$ elements of $\dot{A}$.
Now unfix $c$.
Define $\vec{\mc{U}}' := \bigcup_c \vec{\mc{U}}_c$.
Let $Q_0 := \bigcup_c U_c' \subseteq Q$.

Use the fact that $\vec{\mc{U}}(t)$
 is an ultrafilter that concentrates on
 $\mathrm{Succ}_{Q_0}(t)$
 to get a set
 $C_0 \subseteq \mathrm{Succ}_{Q_0}(t)$
 in $\vec{\mc{U}}(t)$ such that
 the nodes $s_c$ for $c \in C_0$ are either
 all red in $Q_0$ or all blue in $Q_0$.
We will break into cases.

\underline{Case 1}:
First, consider the case that
 the nodes $s_c$ for $c \in C_0$ are all blue
 in $Q_0$.
Use Lemma~\ref{blueselection} (Blue Selection)
 to get, for each $c \in C_0$, a node
 $\tilde{c}_c \in \mathrm{Succ}_{Q_0}(s_c)$ such that
 the sequences $\psi_{Q_0}(\tilde{c}_c)$ are pairwise
 incomparable.
Note that for each $c \in C_0$,
 $\psi_{Q_0 | \tilde{c}_c}(c) =
  \psi_{Q_0}(\tilde{c}_c)$.
Let $Q_1 := \bigcup_{c \in C_0} (Q_0 | \tilde{c}_c)
 \subseteq Q_0$.
We have that $t$ is a blue node of $Q_1$.
Defining $Q' := Q_1$, we are done.

\underline{Case 2}:
The other case is that the nodes $s_c$
 for $c \in C_0$ are all red.
Let $b : [C_0]^2 \to 2$
 be the coloring defined by
 $b(c_1, c_2) = 1$ iff
 $\psi_{Q_0}(c_1)$ and $\psi_{Q_0}(c_2)$
 are comparable,
 and $b(c_1, c_2) = 1$ otherwise.
Since the ultrafilter $\vec{\mc{U}}(t)$
 which concentrates on $C_0$
 is induced by a normal ultrafilter on $\kappa_m$,
 fix a set $C_1 \subseteq C_0$ in $\vec{\mc{U}}(t)$
 that will homogenize the coloring $b$.
Hence the sequences $\psi_{Q_0}(c)$ for $c \in C_1$
 are either all comparable
 or all incomparable.

If they are pairwise incomparable,
 then define
 $Q' := \bigcup_{c \in C_1} (Q_0 | c)
 \subseteq Q_0$.
The node $t$ is blue in $Q'$, and we are done.
If they are pairwise comparable,
 then use the fact that $\vec{\mc{U}}(t)$
 is $\omega_1$-complete
 to get a set $C_2 \subseteq C_1$ in
 $\vec{\mc{U}}(t)$ such that the sequences
 $\psi_{Q_0}(c)$ for $c \in C_2$ are all the same
 (by getting the
 sequences $\psi_{Q_0}(c)$ to have the same length,
 we get them to be identical).
Define $Q' :=
 \bigcup_{c \in C_2} (Q_0 | c) \subseteq Q_0$.
We have that $t$ is red in $Q'$.
From our definition of a red node,
 since each $s_c$ is a red node of $Q'$,
 it follows that for each $c \in C_2$
 and each $c' \in \mathrm{Succ}_{Q'}(s_c)$,
 we have $\psi_{Q'}(c) = \psi_{Q'}(c')$.
We said earlier that $U_c'$ decides at least
 the first $m + (k+1)$ elements of $\dot{A}$.
Thus, $Q'$ itself decides at least
 the first $m + (k+1)$ values of $\dot{A}$.
This completes the definition of $\Phi$.

With $\Phi$ defined,
 we will prove the lemma.
Let $\langle T_0, \vec{\mc{U}}_0 \rangle :=
 \Phi(T, s, \emptyset, n, 0)$.
If $s$ is blue in $T_0$, we are done
 by setting $W := T_0$.
If not, then
 $(T_0 | s) = T_0$ decides at least the first
 $n$ values of $\dot{A}$.
Next, let $\langle T_1, \vec{\mc{U}}_1 \rangle :=
 \Phi(T_0, s, \vec{\mc{U}}_0, n, 1)$.
If $s$ is blue in $T_1$, we are done
 by setting $W := T_1$.
If not, then
 $(T_1 | s) = T_1$ decides at least the first
 $n+1$ values of $\dot{A}$.
Next, let $\langle T_2, \vec{\mc{U}}_2 \rangle :=
 \Phi(T_1, s, \vec{\mc{U}}_1, n, 2)$, etc.

We claim that this procedure eventually terminates.
If not, then we have produced the sequences
 $T_0 \supseteq T_1 \supseteq T_2 \supseteq ...$
 (which is probably \textit{not} a fusion sequence) and
 $\vec{\mc{U}}_0 \subseteq \vec{\mc{U}}_1
 \subseteq \vec{\mc{U}}_2 \subseteq ...$.
Let $T_\omega := \bigcap_{i < \omega} T_i$.
If we can show that $T_\omega$ contains a perfect tree $\tilde{T}$,
 then we will have that $\tilde{T}$ decides at
 least the first $k$ values of $\dot{A}$
 for every $k < \omega$,
 which implies $\tilde{T} \forces \dot{A} \in \check{V}$,
 which is a contradiction.

First note that the stem $s$ of $T$ satisfies
 $s \in \dom(\vec{\mc{U}}_0)$
 and $s$ has $\vec{\mc{U}}_0(s)$ many children
 in each tree $T_i$.
Using the $\omega_1$-completeness
 of $\vec{\mc{U}}_0(s)$,
 $s$ has $\vec{\mc{U}}_0(s)$ many children
 in $T_\omega$, so in particular it has $\kappa_n$ children
 in $T_\omega$.

Now temporarily fix $c \in \mathrm{Succ}_{T_\omega}(s)$.
Let $s_c$ be the minimal extension of $c$ in $T_1$
 that has $\ge \kappa_{n+1}$ children.
Now $s_c$ will never become a blue node in $T_i$
 for any $i \ge 1$,
 because otherwise
 because $s$ remains red we would have that
 $s_c$ would get removed at some point
 and hence $c$ would get removed,
 contradicting that $c \in \mathrm{Succ}_{T_\omega}(s)$.
We can see by the ways trees are shrunk
 in the definition of $\Phi$ that the following holds:
 $\vec{\mc{U}_1}(s_c)$ is defined
 and for each $i \ge 1$,
 $s_c$ is in each $T_i$
 and
 $s_c$ has $\vec{\mc{U}_1}(s_c)$ many children in $T_i$.
So by the $\omega_1$-completeness of $\vec{\mc{U}_1}(s_c)$,
 $s_c$ has $\kappa_{n+1}$ children in $T_\omega$.

Continuing like this,
 here is the general pattern.
We let $S_0 = \{ s \}$.
Then, for $i \in \omega$
 having defined $S_i$,
 we define $S_{i+1}$ as follows:
 a node $c$ is in $S_{i+1}$ iff
 it is the minimal extension
 of a node in
 $$\bigcup \{ \mbox{Succ}_{T_\omega}(t) : t \in S_i \}$$
 that has $\kappa_{n + i + 1}$ children in $T_{i+1}$.
Let $\tilde{T}$ be the set of all initial segments
 of nodes in $\bigcup_i S_i$.
One can check that $\tilde{T}$ is a perfect subtree of $T_\omega$.
In fact, $\tilde{T} = T_\omega$.

%
\end{proof}

\begin{theorem}\label{thm.6.6}
Assume $\cf(\kappa) = \omega$.
Suppose the cardinals $\kappa_0 < \kappa_1 < ...$
 are all measurable.
Fix a condition $T \in \mbb{P}$.
Let $\dot{A}$ be a name such that
 $T \forces (\dot{A} : \omega \to \check{V})$ and
 $T \forces (\dot{A} \not\in \check{V})$.
Let $\dot{G}$ be a name for the generic object.
Then $T \forces \dot{G} \in \check{V}(\dot{A})$.
\end{theorem}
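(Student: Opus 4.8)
The plan is to deduce this from the Blue Production Lemma (Lemma~\ref{blueproduction}) and the Blue Coding Lemma (Lemma~\ref{bluecoding}) via a fusion argument. Since the set of conditions forcing $\dot{G} \in \check{V}(\dot{A})$ is downward closed, it suffices to prove this set is dense below $T$; so, given any $T' \le T$ (which still forces $\dot{A} : \omega \to \check{V}$ and $\dot{A} \notin \check{V}$), I will produce $Q \le T'$ which is in weak splitting normal form and all of whose splitting nodes are blue with respect to $\dot{A}$. Lemma~\ref{bluecoding} then gives $Q \forces \dot{G} \in \check{V}(\dot{A})$, and density yields $T \forces \dot{G} \in \check{V}(\dot{A})$.

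To build $Q$, first apply Proposition~\ref{cangetstrongnormalform} and then Lemma~\ref{blueproduction} to get $T_0 \le T'$ in weak splitting normal form whose stem is blue. Then construct a decreasing sequence $T_0 \ge T_1 \ge T_2 \ge \cdots$ maintaining the invariant that $T_n$ is in weak splitting normal form and every $m$-splitting node of $T_n$ with $m \le n$ is blue in $T_n$. At stage $n+1$: for each $n$-splitting node $t$ of $T_n$ and each child $c$ of $t$, let $s_c \sqsupseteq c$ be the minimal splitting extension of $c$ in $T_n$; by the invariant $s_c$ is an $(n+1)$-splitting node, so $T_n | s_c$ has stem $s_c$ with exactly $\kappa_{n+1}$ children, and Lemma~\ref{blueproduction} provides $W_c \le T_n | s_c$ in which $s_c$ has $\kappa_{n+1}$ children and is blue. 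After further thinning $W_c$ among proper extensions of $s_c$ if necessary to restore weak splitting normal form (possible since $W_c$ is perfect, and it leaves the children of $s_c$ untouched, so $s_c$ stays blue), let $T_{n+1}$ be obtained from $T_n$ by replacing, for each such $c$, the subtree $T_n | c$ with $W_c$. Each $W_c$ contains every node of $T_n$ that is an initial segment of $s_c$ and only thins $T_n$ among proper extensions of $s_c$, so all $m$-splitting nodes of $T_n$ with $m \le n$, together with all of their children, survive into $T_{n+1}$; hence these nodes stay blue in $T_{n+1}$ by the downward preservation of blueness, while the new $(n+1)$-splitting nodes of $T_{n+1}$ are exactly the $s_c$, which were just made blue. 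So the invariant persists.

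The sequence $\langle T_n : n < \omega \rangle$ is a fusion sequence in the sense of Definition~\ref{defn.4.2}, with $S_n$ the set of $n$-splitting nodes of $T_n$: each such node has $\kappa_n$ children, which persist into every later $T_m$, and each child extends to its minimal splitting extension, a member of $S_{n+1}$. By Lemma~\ref{lem.4.3}, $Q := \bigcap_{n < \omega} T_n \in \mbb{P}$ and $Q \le T_0 \le T'$. On each branch of $Q$ the splitting nodes are precisely the $n$-splitting nodes inherited from the $T_n$, so $Q$ is in weak splitting normal form, and each such node, made blue at the corresponding stage and retaining all $\kappa_n \ge 2$ of its children in $Q$, is still blue in $Q$. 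Thus $Q$ meets the hypotheses of Lemma~\ref{bluecoding}, finishing the proof. I expect the main obstacle to be the bookkeeping of the fusion — confirming that the local surgery at each stage, together with the re-thinnings needed to keep weak splitting normal form, really yields a perfect tree and a bona fide fusion sequence, and above all that no application of Lemma~\ref{blueproduction} undoes the blueness already arranged earlier; the facts that make this work are that blueness is inherited by stronger conditions and that the stage-$(n+1)$ surgery deletes no node that is an initial segment of a child of an $n$-splitting node.
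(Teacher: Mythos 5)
Your proposal is correct and follows essentially the same route as the paper: a fusion construction that applies the Blue Production Lemma at each successive splitting level, uses the downward persistence of blueness to keep earlier levels blue, and finishes with the Blue Coding Lemma. Your explicit density-below-$T$ framing and the bookkeeping to maintain weak splitting normal form at each stage are slightly more careful than the paper's write-up, but they are refinements of the same argument rather than a different one.
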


\begin{proof}
It suffices to find a condition $T' \le T$
 satisfying the hypotheses of
 Lemma~\ref{bluecoding} (Blue Coding).
We will construct $T'$ by performing fusion.

Let $T_\emptyset \le T$ be such that the stem
 $t_\emptyset \in T_\emptyset$ is $0$-splitting.
Apply Lemma~\ref{blueproduction}
 (Blue Production)
 to the tree $T_\emptyset$ and the node
 $t_\emptyset \in T_\emptyset$
 to get $T_\emptyset' \le T_\emptyset$.
Now $t_\emptyset$ is blue
 and $0$-splitting in $T_\emptyset'$.
Hence, the unique $0$-splitting node of $T_\emptyset'$ is blue.
Define $T_0 := T_\emptyset'$,
 the first element of our fusion sequence.

Now, fix any $c \in \mathrm{Succ}_{T_0}(t_\emptyset)$.
Let $T_c \le (T_0 | c)$ be
 such that there is a (unique) $1$-splitting node
 $t_c \sqsupseteq c$ in $T_c$.
Apply Lemma~\ref{blueproduction}
 (Blue Production)
 to the tree $T_c$ and the node $t_c$
 to get $T_c' \le T_c$.
Now $t_c$ is blue and $1$-splitting in $T_c'$.
Unfixing $c$, let us define
 $T_1 := \bigcup \{ T_c' : c \in \mathrm{Succ}(T_c',t_\emptyset) \}$.
We have $T_1 \le T_0$,
 every child of $t_\emptyset$ is in $T_1$
 (so in particular it is $0$-splitting),
 and every $1$-splitting node of $T_1$ is blue.

We may continue like this to get the fusion sequence
 $T_0 \supseteq T_1 \supseteq T_2 \supseteq ...$.
Define $T'$ to be the intersection of this sequence.
We have that $T'$ is in weak splitting normal form
 (every node with $>1$ child is $n$-splitting for some $n$).
Since being blue is preserved when we pass to a stronger condition,
 every splitting node of $T'$ is blue.
We may now apply
 Lemma~\ref{bluecoding} (Blue Coding),
 and the theorem is finished.
\end{proof}

So $\mathbb{P}$ is minimal with respect to
 $\omega$-sequences of ordinals,
 but by what we found earlier it is not minimal:

\begin{cor}\label{cor.6.7}
The forcing $\mbb{P}$ does not add a minimal degree of constructibility. 
\end{cor}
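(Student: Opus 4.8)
The plan is to derive this immediately from the two facts already established: the complete embedding of $\mathcal{P}(\omega)/\Fin$ into $\mbb{B}$ (Theorem~\ref{thm.5.3}) together with the classical fact that $\mathcal{P}(\omega)/\Fin$ is $\omega$-distributive, i.e.\ adds no new $\omega$-sequences of ordinals (this also follows from $\omega_1 \le \mf{h}$). Fix a $\mbb{P}$-generic $G$ over $V$. By Theorem~\ref{thm.5.3} and the standard consequence of a complete embedding recalled after Definition~\ref{cedef}, there is in $V[G]$ a filter $H$ that is $\mathcal{P}(\omega)/\Fin$-generic over $V$. Fix in $V$ a bijection $e$ from the ordinal $\beta := (2^\omega)^V$ onto the underlying set of $\mathcal{P}(\omega)/\Fin$, and set $A := \{ \xi < \beta : e(\xi) \in H \}$, which we regard as a function $A : \beta \to 2$. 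Then $A \in V[G]$, and $A \notin V$ since $H$ is a generic filter for a nontrivial separative poset. Moreover $V(A) = V[A] = V[H]$, because $e \in V$ lets one recover $H$ from $A$ and conversely. Since $1_\mbb{P}$ forces a name $\dot A$ for $A$ to be a function from the fixed ordinal $\check\beta$ to $2$, it suffices to show $1_\mbb{P} \forces \dot G \notin \check V(\dot A)$, and this refutes the definition of adding a minimal degree of constructibility.

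For the key step, let $g := \bigcap G \in X$ be the generic branch. Since $\mbb{P}$ adds a new $\omega$-sequence of ordinals (indeed, a new element of $X = \prod_{n<\omega}\kappa_n$), $g \notin V$. Because $\mathcal{P}(\omega)/\Fin$ adds no new $\omega$-sequences of ordinals, the model $V[H]$ has exactly the same $\omega$-sequences of ordinals as $V$, so $g \notin V[H]$. As $g$ is definable from $G$, we conclude $G \notin V[H] = V(A)$, i.e.\ $1_\mbb{P} \forces \dot G \notin \check V(\dot A)$, while also $1_\mbb{P} \forces \dot A \notin \check V$. Hence $1_\mbb{P}$ forces the negation of ``$\dot A \in \check V$ or $\dot G \in \check V(\dot A)$'', so $\mbb{P}$ does not add a minimal degree of constructibility.

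There is no real obstacle here beyond the coding bookkeeping: one must be sure that the smallest inner model $V(A)$ containing $V$ and the set of ordinals $A$ is literally $V[H]$, so that the $\omega$-distributivity of $\mathcal{P}(\omega)/\Fin$ may be transferred to the statement ``$V(A)$ contains no new $\omega$-sequences of ordinals''; since $A$ codes a set-generic filter through a ground-model bijection, this is routine. It is worth emphasizing that this argument uses none of the large-cardinal hypotheses of Theorem~\ref{thm.6.6}: the corollary is exactly the observation that $\mbb{P}$ witnessing minimality for new $\omega$-sequences of ordinals does \emph{not} imply that $\mbb{P}$ produces generic extensions of minimal degree, the obstruction being the proper intermediate extension $V[H]$, which carries a new real but no new $\omega$-sequence of ordinals.
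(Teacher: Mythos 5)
Your argument is correct and is essentially the paper's own proof: both use the complete embedding of $\mathcal{P}(\omega)/\Fin$ into $\mbb{B}$ from Theorem~\ref{thm.5.3} to produce an intermediate extension $V \subsetneq V[H] \subsetneq V[G]$, with properness on the left because $H \notin V$ and on the right because $\mathcal{P}(\omega)/\Fin$ adds no new $\omega$-sequences while the generic branch is one. The only difference is that you spell out the routine coding of $H$ as a set of ordinals $A$ with $V(A)=V[H]$ so as to match the paper's official definition of adding a minimal degree, which the paper leaves implicit.
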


\begin{proof}
Let $\mbb{B}$ be the regular open completion of $\mbb{P}$.
In the previous section,
 we showed that there is a complete
 embedding of $\mc{P}(\omega)/\Fin$
 into $\mbb{B}$.
Let $G$ be generic for $\mbb{P}$ over $V$.
Let $H \in V[G]$ be generic for
 $\mc{P}(\omega)/\Fin$ over $V$.
Since $\mc{P}(\omega)/\Fin$ is
 countably complete, it does not add any
 new $\omega$-sequences, so $G \not\in V[H]$.
On the other hand, we have $H \not\in V$.
Thus,
 $V \subsetneq V[H] \subsetneq V[G]$,
 so the forcing is not minimal.
\end{proof}


\section{Uncountable height counterexample and open problems}
\label{secuncheight}

To conclude the paper, we present an example of what can go wrong
 when one tries to generalize some of the results in the previous
 sections to singular cardinals $\kappa$  with uncountable cofinality.

Assuming $\cf(\kappa) > \omega$,
 we will first construct a pre-perfect tree
 $T \subseteq N$
 such that $[T]$ has size $\kappa$.

\begin{lemma}
Let $g : \mathrm{Ord} \to 2$ be a function.
Given an ordinal $\gamma$, let
 $$S_{g \restriction \gamma}
 := \{ \alpha < \gamma : g(\alpha) = 1 \}.$$
Let $\Phi_{< \gamma}$ be the statement that
 for each limit ordinal $\alpha < \gamma$,
 $g$ equals $0$ for a final segment of $\alpha$.
Let $\Phi_{\gamma}$ be the analogous statement
 but for all $\alpha \le \gamma$.
The following hold: 
\begin{itemize}
\item[1)] If $\Phi_\gamma$, then $S_{g \restriction \gamma}$ is finite.
\item[2)] If $\Phi_{<\gamma}$ and $\cf(\gamma) \not= \omega$,
 then $S_{g \restriction \gamma}$ is finite.
\item[3)] If $\Phi_{<\gamma}$, then $S_{g \restriction \gamma}$
 is countable.
\end{itemize}
\end{lemma}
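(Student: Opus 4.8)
The three claims are about how large the set $S_{g\restriction\gamma}=\{\alpha<\gamma:g(\alpha)=1\}$ can be, given that $g$ vanishes on a final segment of each limit ordinal below (or at) $\gamma$. The plan is to prove all three by induction on $\gamma$, the key recursion being that $S_{g\restriction\gamma}$ can only accumulate at limit ordinals, and the hypothesis $\Phi$ forbids exactly that kind of accumulation from below.

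For 1), I would argue by induction on $\gamma$ assuming $\Phi_\gamma$. At successor steps $\gamma=\delta+1$ we have $S_{g\restriction\gamma}=S_{g\restriction\delta}\cup(\{ \delta\}\cap\{g(\delta)=1\})$, which is finite since $\Phi_\delta$ holds and the inductive hypothesis applies. At a limit $\gamma$, $\Phi_\gamma$ says $g$ is eventually $0$ below $\gamma$, so fix $\beta<\gamma$ with $g\restriction[\beta,\gamma)\equiv 0$; then $S_{g\restriction\gamma}=S_{g\restriction(\beta+1)}$, which is finite by the inductive hypothesis (note $\Phi_{\beta+1}$ follows from $\Phi_\gamma$). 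For 2), assuming $\Phi_{<\gamma}$ and $\cf(\gamma)\neq\omega$: if $\gamma$ is a successor or $0$ this reduces to 1) applied to the predecessor. If $\gamma$ is a limit with uncountable cofinality, then $\Phi_{<\gamma}$ in particular entails $\Phi_\gamma$ — because $\gamma$ itself is a limit ordinal with $\cf(\gamma)\neq\omega$, and one needs to observe that $\Phi_{<\gamma}$ already forces $g$ to be eventually $0$ below $\gamma$; this is the one genuine point. Indeed, suppose not: then $S_{g\restriction\gamma}$ is unbounded in $\gamma$, so we can pick an increasing $\omega$-sequence $\alpha_0<\alpha_1<\cdots$ of elements of $S_{g\restriction\gamma}$; its supremum $\alpha=\sup_n\alpha_n$ satisfies $\alpha\le\gamma$. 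If $\alpha<\gamma$, then $\alpha$ is a limit ordinal on which $g$ does not vanish on a final segment, contradicting $\Phi_{<\gamma}$. If $\alpha=\gamma$, this contradicts $\cf(\gamma)\neq\omega$. So $\Phi_\gamma$ holds and 1) finishes it.

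For 3), assuming only $\Phi_{<\gamma}$: the successor and $0$ cases reduce to 1) again. At a limit $\gamma$, pick a strictly increasing cofinal sequence $\langle\gamma_\xi:\xi<\cf(\gamma)\rangle$ in $\gamma$, with each $\gamma_\xi$ a successor ordinal if desired (or just arbitrary, handling limits of the sequence separately). Then $S_{g\restriction\gamma}=\bigcup_{\xi<\cf(\gamma)}(S_{g\restriction\gamma}\cap[\gamma_\xi,\gamma_{\xi+1}))$, and each piece $S_{g\restriction\gamma}\cap\gamma_{\xi+1}\subseteq S_{g\restriction\gamma_{\xi+1}}$ is finite by part 1 applied with $\Phi_{\gamma_{\xi+1}}$ (which follows from $\Phi_{<\gamma}$ since $\gamma_{\xi+1}<\gamma$ if we took the $\gamma_\xi$ successors, or more carefully $\Phi_{<\gamma_{\xi+1}}$ plus an inductive application). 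Thus $S_{g\restriction\gamma}$ is a union of $\cf(\gamma)\le\gamma$ — but we want countable. The resolution: if $\cf(\gamma)>\omega$, then part 2 already gives finiteness, so the only interesting case is $\cf(\gamma)=\omega$, where the above displays $S_{g\restriction\gamma}$ as a countable union of finite sets, hence countable.

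The main obstacle I anticipate is the book-keeping around which instance of $\Phi$ (the ``$<\gamma$'' version versus the ``$\le\gamma$'' version) is available at each inductive step — in particular the observation in part 2 that under uncountable cofinality $\Phi_{<\gamma}$ upgrades to $\Phi_\gamma$, which is where the cofinality hypothesis is actually used. Everything else is a routine transfinite induction; once the right statement to induct on is isolated (proving 1, 2, 3 simultaneously, or proving 1 first and bootstrapping), the arguments are short.
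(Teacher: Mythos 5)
Your proof is correct, and its overall skeleton (transfinite induction with a successor/limit split, and part 3 at cofinality $\omega$ via a countable union of finite sets) matches the paper's. The genuine difference is in the uncountable-cofinality case. The paper establishes parts 2 and 3 there by pressing down: for each limit $\alpha<\gamma$ it chooses $f(\alpha)<\alpha$ with $g\equiv 0$ on $[f(\alpha),\alpha)$, applies Fodor's Lemma to get a stationary (hence cofinal) fiber $f^{-1}(\beta)$, and concludes that $g$ vanishes from $\mu:=\min f^{-1}(\beta)$ onward, so $S_{g\restriction\gamma}=S_{g\restriction\mu}$ is finite by the inductive hypothesis. You instead observe directly that if $S_{g\restriction\gamma}$ were unbounded in $\gamma$, any strictly increasing $\omega$-sequence from it would have supremum $\alpha$ that is either $\gamma$ (impossible when $\cf(\gamma)>\omega$) or a limit ordinal below $\gamma$ violating $\Phi_{<\gamma}$; hence $\Phi_{<\gamma}$ upgrades to $\Phi_\gamma$ and part 1 applies. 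Your route is more elementary (no Fodor, no stationarity), and it also lets you handle part 1 at limit stages uniformly by reading off the ``eventually $0$'' clause of $\Phi_\gamma$ itself rather than splitting on the cofinality of $\gamma$; the paper's Fodor argument buys nothing extra here beyond matching a standard idiom. Both arguments are complete and correct.
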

\begin{proof}
We can prove these by induction on $\gamma$.
If $\gamma = 0$, there is nothing to do.
Now assume that $\gamma$ is a successor ordinal.
If we assume $\Phi_{<\gamma}$, then
 $\Phi_{\gamma-1}$ is true so
 by the inductive hypothesis and the fact that
 $$|S_{g \restriction \gamma}| \le
 |S_{g \restriction (\gamma-1)}| + 1,$$
 $S_{g \restriction \gamma}$ is finite.

Now assume that $\cf(\gamma) = \omega$.
Let $\langle \gamma_n : n \in \omega \rangle$
 be a sequence cofinal in $\gamma$.
Note that
 $$S_{g \restriction \gamma} =
 \bigcup_{n \in \omega}
 S_{g \restriction \gamma_n} =
 S_{g \restriction \gamma_0} \cup
 \bigcup_{n \in \omega} (
 S_{g \restriction \gamma_{n+1}} -
 S_{g \restriction \gamma_n} ).$$
Thus, if we assume $\Phi_{<\gamma}$,
 then $\Phi_{\gamma_n}$ holds for each $n$,
 so by the induction hypothesis
 each $S_{g \restriction \gamma_n}$ is finite,
 so $S_{g \restriction \gamma}$ is countable.
If additionally we assume $\Phi_{\gamma}$,
 then it must be that all but finitely
 many of the
 $S_{g \restriction \gamma_{n+1}} -
 S_{g \restriction \gamma_n}$
 are empty, so $S_{g \restriction \gamma}$ is finite.

Finally, assume $\cf(\gamma) > \omega$
 and $\Phi_{< \gamma}$.
For each limit ordinal $\alpha < \gamma$,
 let $f(\alpha) < \alpha$ be such that $g$ is $0$
 from $f(\alpha)$ to $\alpha$.
By Fodor's Lemma, fix some $\beta < \gamma$
 such that $f^{-1}(\{ \beta \}) \subseteq \gamma$
 is a stationary
 subset of $\gamma$.
Since $f^{-1}(\{ \beta \})$
 is cofinal in $\gamma$,
 we see that $g$ is $0$ from
 $\mu := \min f^{-1}(\{ \beta \})$ to $\gamma$.
Thus,
 $S_{g \restriction \gamma} =
  S_{g \restriction \mu}$.
The set $S_{g \restriction \mu}$
 is finite because of $\Phi_{\mu}$
 and the induction hypothesis,
 so we are done.
\end{proof}

We can now get the desired
 counterexample:
\begin{counterexample}\label{counterex}
\label{uncountablecounterex}
Assume $\cf(\kappa) > \omega$.
There is a pre-perfect tree $T \subseteq N$
 such that $[T]$ has size $\kappa$,
 and hence $[T]$ is not perfect.
\end{counterexample}

\begin{proof}
We will define $T \subseteq N$.
Define the $\alpha$-th level of $T$ as follows:
\begin{itemize}
\item[1)] if $\alpha = 0$,
 then the level consists of only
 the root $\emptyset$.
\item[2)] If $\alpha = \beta + 1$,
 then a node is in the $\alpha$-th
 level of $T$ iff it is the successor
 in $N$ of a node in the $\beta$-th
 level of $T$.
\item[3)] If $\alpha$ is a limit ordinal,
 then a node $t$ is in the $\alpha$-th level
 of $T$ iff every proper initial segment of $t$
 is in $T$ and $t(\beta) = 0$
 for a final segment of $\beta$'s
 less than $\alpha$.
\end{itemize}
First, let us verify that $T$ is non-stopping.
Consider any node $t \in T$.
Let $f \in X$ be the unique function
 that extends $t$ such that
 $f(\alpha) = 0$ for all $\alpha$ in
 $\dom(f) - \dom(t)$.
We see that $f$ is a path through $T$.

We will now show that $[T]$ has size
 $\le \kappa$.
Consider any $f \in [T]$.
Let $g : \cf(\kappa) \to 2$ be the function
 $$g(\alpha) :=
 \begin{cases}
 0 & \mathrm{if } f(\alpha) = 0, \\
 1 & \mathrm{otherwise.}
 \end{cases}$$
By the definition of $T$ and the lemma
 above, it must be that
 $\{ \alpha < \cf(\kappa) : g(\alpha) = 1 \}$
 is finite.
Recall that for each $\alpha < \cf(\kappa)$,
 there are at most $\kappa_\alpha$
 possible values for $f(\alpha)$.
Now, a simple computation shows that
 there are at most $\kappa$ such paths $f$
 associated to a given $g$
 (in fact, there are exactly $\kappa$).
\end{proof}

This counterexample points to the need for some further requirements on the trees when $\kappa$ has uncountable cofinality.
Such obstacles will likely be overcome by assuming that splitting levels on branches are club, as in \cite{Kanamori80} and  \cite{Brown/Groszek06}, as this will provide fusion for $\cf(\kappa)$ sequences of trees.
We ask, which distributive laws hold and which ones fail for the Boolean completions of the families of perfect tree forcings similar to those in this paper for singular $\kappa$ of uncountable cofinality, but requiring club splitting, or some other splitting requirement which ensures $\cf(\kappa)$-fusion.
More generally,

\begin{question}
Given a regular cardinal $\lambda$,
for which cardinals $\mu$ is there a complete Boolean algebra in which 
for all $\nu<\mu$, the $(\lambda,\nu)$-d.l.\ holds but the $(\lambda, \mu)$-d.l.\ fails?
\end{question}

Similar questions remain open for three-parameter distributivity.

\bibliographystyle{amsplain}
\bibliography{references}

\end{document}